\newtheorem{theorem}{Theorem}[section]
\newtheorem{corollary}[theorem]{Corollary}
\newtheorem{lemma}[theorem]{Lemma}
\newtheorem{proposition}[theorem]{Proposition}
\newtheorem{conjecture}[theorem]{Conjecture}
\theoremstyle{definition}
\newtheorem{example}[theorem]{Example}
\theoremstyle{definition}
\newtheorem{remark}[theorem]{Remark}
\newcommand{\OO}{\mathcal O}
\newcommand{\EE}{\mathcal E}
\newcommand{\eps}{\varepsilon}
\newcommand{\Z}{\mathbb Z}
\newcommand{\Q}{\mathbb Q}
\newcommand{\N}{\mathbb N}
\newcommand{\F}{\mathbb F}
\newcommand{\Hom}{\operatorname{Hom}}
\newcommand{\End}{\operatorname{End}}
\newcommand{\GL}{\operatorname{GL}}
\newcommand{\Ker}{\operatorname{Ker}}
\newcommand{\Ext}{{\operatorname{Ext}}}
\newcommand{\Gal}{\operatorname{Gal}}
\newcommand{\Irr}{\operatorname{Irr}}
\newcommand{\IBr}{\operatorname{IBr}}
\newcommand{\opp}{\operatorname{op}}
\newcommand{\rank}{\operatorname{rank}}
\newcommand{\Res}{{\operatorname{Res}}}
\newcommand{\PSL}{{\operatorname{PSL}}}
\newcommand{\U}{\mathcal{U}}
\newcommand{\V}{\mathrm{V}}
\keywords{Zasshaus conjecture, integral group rings, blocks of cyclic defect, unit groups}
\subjclass[2010]{16U60, 20C05, 20C11}
\thanks{The second author acknowledges financial support by the Spanish Ministry of Science and Innovation through the Severo Ochoa Programme Grant CEX2019-000904-S funded by MCIN/AEI/10.13039/501100011033. The first author was partially supported by a travel grant by the DKO trust. A visit by the second author to Manchester was funded by EPSRC grant EP/T004606/1.}
\title{Units in Blocks of Defect 1 and the Zassenhaus Conjecture}
\author{Florian Eisele}
\address{Department of Mathematics, University of Manchester, Oxford Road, Manchester, M13 9PL}
\email{florian.eisele@manchester.ac.uk}
\author{Leo Margolis}
\address{ICMAT, C/ Nicolas Cabrera 13, 28049 Madrid, Spain.}
\email{leo.margolis@icmat.es}
\renewcommand{\leq}{\leqslant}
\renewcommand{\geq}{\geqslant}
\begin{document}

\maketitle

\begin{abstract}
   Building on previous work by Caicedo and the second author, we develop a method that decides the existence of units of finite order in blocks of $\Z_p G$ of defect 1. This allows us to prove that if $p$ is a prime and $G$ is a finite group whose Sylow $p$-subgroup has order $p$, then any unit  $u\in \Z G$ of order $p$ is conjugate to an element of $\pm G$. This is a special case of the Zassenhaus conjecture. We also prove some new results on units of finite order in $\Z \PSL(2,q)$ for certain $q$, and construct a unit of order $15$ in $V(\Z_{(3,5)}\PSL(2,16))$ which is a $3$- and $5$-local counterexample to the Zassenhaus conjecture, raising the hope that our methods may lead to a global counterexample amongst simple groups. 
\end{abstract}

\section{Introduction}

One of the fundamental questions about the group ring $\Z G$ for a finite group $G$ is what the finite subgroups of its unit group look like. In this paper, we are interested in individual units, or, equivalently, finite cyclic subgroups of the unit group $\U(\Z G)$. Zassenhaus conjectured in \cite{Zassenhaus} that 

\begin{conjecture}[Zassenhaus]\label{conj:zassenhaus}
   If $u$ is a unit of finite order in $\Z G$, then $u$ is conjugate within $\Q G$ to $\pm g$ for some $g\in G$.
\end{conjecture}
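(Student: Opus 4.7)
The plan is to reduce the global Zassenhaus statement to a $p$-local question and then apply the defect-$1$ machinery established in the earlier sections of the paper.

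First, I would translate the conclusion into partial augmentations. By the Berman--Higman theorem and Hertweck's theorem on vanishing of partial augmentations at classes of order not dividing $|u|$, a unit $u\in\Z G$ of order $p$ has nonzero partial augmentations only on the $m:=(p-1)/e$ conjugacy classes of elements of order $p$ (here $e=[N_G(P):C_G(P)]$ for a Sylow $p$-subgroup $P$), and these sum to $1$. By the Marciniak--Ritter--Sehgal criterion, $u$ is $\Q G$-conjugate to an element of $\pm G$ precisely when exactly one of these partial augmentations equals $1$ and the others vanish. Since all of this data is detected by the image of $u$ in $\Z_{(p)}G$, the question is $p$-local and may be analysed block by block.

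Next, I would decompose $\Z_p G=\prod_B B$. Because $|G|_p=p$, every block has defect $0$ or $1$. Defect-$0$ blocks carry no useful information here: every $\chi\in\Irr(B)$ for such a block vanishes on non-identity $p$-singular classes, so these blocks cannot distinguish between the various classes of $p$-elements. All the content of the partial-augmentation vector $(\varepsilon_{g_i}(u))_{i=1}^m$ is therefore concentrated in the defect-$1$ blocks — and already in the principal block, whose exceptional vertex carries multiplicity exactly $m$, matching the number of classes of order-$p$ elements.

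The core of the argument is then to run the method developed earlier in the paper, extending the approach of Caicedo and the second author: given the Brauer tree of a defect-$1$ block, it decides which tuples of partial augmentations $(\varepsilon_{g_i}(u))$ can occur for a unit of order $p$ projecting into that block. Applied to the principal block, this should rule out every tuple except the ``group-like'' ones with a single $1$ and zeros elsewhere; each of these is realised by an actual element of $G$, so $u$ becomes $\Q G$-conjugate to $\pm g$ for the corresponding class representative. The principal obstacle is making the defect-$1$ classification uniform over every shape of Brauer tree and every exceptional multiplicity $m=(p-1)/e$ — precisely what the preceding sections of the paper must supply — after which the present theorem follows by the block-by-block reduction described above.
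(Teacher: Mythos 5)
The statement you have been asked to prove is Conjecture~\ref{conj:zassenhaus} itself, and there is no proof of it in the paper: the very next sentence of the introduction states ``While this conjecture is false in general \cite{EiseleMargolis}\ldots'', so the paper could not possibly contain a proof for you to be compared against. This is the fundamental gap. What the paper actually proves is Theorem~\ref{thm:main}, the special case where $u$ has order $p$ and the Sylow $p$-subgroup of $G$ has order $p$. Your proposal silently specialises to exactly this case — you write ``a unit $u\in\Z G$ of order $p$'' and ``Because $|G|_p=p$, every block has defect $0$ or $1$'' — so you are really outlining Theorem~\ref{thm:main} and have not addressed the conjecture as stated (arbitrary finite order, arbitrary $G$). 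Since the conjecture is false, any argument purporting to establish it in full must break down; yours does so by quietly narrowing the hypotheses.

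Even read charitably as a sketch of Theorem~\ref{thm:main}, the proposal delegates all of the substance to a black box. The MRSW reduction to non-negativity of partial augmentations, the Berman--Higman and Hertweck vanishing, and the observation that defect-$0$ blocks contribute nothing are all correct and do mirror the opening moves of the paper's Section~5. But the crucial step — ``the method\ldots decides which tuples of partial augmentations can occur\ldots this should rule out every tuple except the group-like ones'' — is precisely where the paper has to do real work: it needs the filtration criterion Theorem~\ref{thm:filtration} (valid even when $\OO[\zeta_p]$ is ramified), the tableau estimates Lemmas~\ref{lem:full_rectangle}--\ref{lem:CombOnM} and the new inequality Proposition~\ref{prop:MultSumIneq} around the exceptional vertex, the character computations of Proposition~\ref{prop:CharValuesPrinBlock} and Lemmas~\ref{lem:CharValOfU}--\ref{lem:SumPsi1}, and finally the case split on $\delta_\theta=\pm 1$ in Lemmas~\ref{lem:MinMax} and \ref{lem:difference2Case} to force a numerical contradiction whenever some partial augmentation is negative. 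None of this follows automatically from ``applying the method''; the claim that it is uniform over all Brauer tree shapes is exactly the theorem, not a given. (As a small additional note, your $e$ and $m$ are swapped relative to the paper's Section~5 conventions, where $e$ denotes the number of conjugacy classes of order $p$ and $m=(p-1)/e$ denotes the number of edges of the Brauer tree.)
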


While this conjecture is false in general \cite{EiseleMargolis}, it is known to hold in many classes of groups, cf. \cite[Section 4]{MargolisdelRioSurvey} for an overview. In particular there are no known counterexamples to this conjecture where the unit $u$ has $p$-power order for a prime $p$. The question whether any $p$-subgropup of $\U (\Z G)$ is conjugate within $\Q G$ to a subgroup of $\pm G$ remains open as well. In this paper we prove a positive result in this direction. Using the theory of blocks of cyclic defect we are able to prove the following theorem.

\begin{theorem}\label{thm:main}
   Let $G$ be a finite group and let $p$ be a prime. If the Sylow $p$-subgroup of $G$ has order $p$, then any unit $u\in \U(\Z G)$ of order $p$ is conjugate within $\Q G$ to $\pm g$ for some $g\in G$.
\end{theorem}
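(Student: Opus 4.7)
The plan is to combine the classical partial-augmentation criterion for rational conjugacy with the block-theoretic existence test the earlier part of the paper develops. First I would reduce Theorem~\ref{thm:main} to a purely numerical statement about partial augmentations. By the Marciniak--Ritter--Sehgal--Weiss criterion, $u$ is conjugate in $\Q G$ to $\pm g$ for some $g\in G$ exactly when the partial augmentations $\eps_C(u)$ at non-identity classes are supported on a single conjugacy class $C$ of elements of order $p$, and equal $\pm 1$ there. The Berman--Higman theorem kills $\eps_{\{1\}}(u)$, and Hertweck's theorem on partial augmentations of $p$-elements (available because the Sylow $p$-subgroup is cyclic) forces $\eps_C(u)=0$ for every $p$-regular class $C$. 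Since the Sylow $p$-subgroup has order $p$, the only remaining classes are those of elements of order $p$, so the task becomes showing that the vector $\bigl(\eps_C(u)\bigr)$, indexed by these classes, has exactly one non-zero entry, of value $\pm 1$.

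Second, I would localise at $p$ and decompose $\Z_p G = \bigoplus_B B$ into block algebras, writing $u = \sum_B u_B$. Blocks $B$ of defect $0$ are maximal orders in simple $\Q_p$-algebras whose reductions mod $p$ are already semisimple; their unit groups contain no element of order $p$, so $u_B = e_B$ there and they contribute nothing to the partial augmentations on $p$-singular classes. All of the relevant information is therefore concentrated in the pieces $u_B\in B$ for $B$ of cyclic defect $1$. Moreover, each $p$-singular class of $G$ is supported on a unique such block, so any partial-augmentation vector ``mixed'' across several classes of order $p$ must already be mixed inside a single defect-$1$ block.

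Third, I would argue by contradiction: assume the partial-augmentation vector of $u$ on order-$p$ classes is not of the form $\pm \delta_C$. Then some $u_B$, with $B$ of defect $1$, is a unit of order $p$ in $B$ realising a mixed partial-augmentation vector. At this point I would plug each such candidate vector into the existence criterion developed earlier in the paper, which for defect-$1$ blocks translates the existence question into an explicit condition on the Brauer tree of $B$ together with the arithmetic of its cyclic defect group. The claim to verify is that this criterion admits only those vectors already realised by the group elements inside $B$, so no mixed vector survives.

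The main obstacle is precisely this last step: proving that the defect-$1$ existence test truly rules out \emph{every} mixed partial-augmentation vector realisable by a unit of order $p$. This cannot be merely formal, since the $\PSL(2,16)$ example in the abstract shows that similar local obstructions can vanish already for order $15$. The prime-order hypothesis must therefore be used essentially — presumably to force the exponent of $u_B$ in the block to be exactly $p$, which in turn rigidifies the representation-theoretic input enough that the Brauer-tree combinatorics give the desired non-existence. Turning that rigidity into a genuine non-existence statement, uniformly over all defect-$1$ blocks, is where I expect the real content of the proof to lie.
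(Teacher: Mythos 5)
Your opening reduction is fine and matches the paper's: Theorem~\ref{th:MRSW} turns the statement into a claim about partial augmentations, Berman--Higman kills $\varepsilon_1(u)$, and Theorem~\ref{th:pAsofTorsionUnits}(ii) kills all classes of order coprime to $p$, so one must show that exactly one $\varepsilon_{g_i}(u)$ is $1$ and the rest vanish. But the proposal then stops exactly where the proof has to begin: you yourself write that the remaining task is to show the defect-$1$ existence test rules out every mixed partial-augmentation vector, and you defer that step as ``where the real content of the proof lies''. That step \emph{is} the theorem. The paper carries it out by working only in the principal block: it reduces an irreducible lattice affording an exceptional character $\theta$ modulo the maximal ideal to get a $k\langle u\rangle$-module $M$, uses Corollary~\ref{cor:FlorianTamelyRamfied} (the ``only if'' half of Theorem~\ref{thm:filtration}) to produce a filtration of $M$ with layers $I_m^{\mu(\zeta_i,u,\theta)}$, where $\mu(\zeta_i,u,\theta)=\pm\varepsilon_{g_i}(u)+\mathrm{const}$ by Lemma~\ref{lem:CharValOfU}, and plays this against the second filtration of $M$ coming from the Brauer tree. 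The skew-tableau combinatorics (Lemma~\ref{lem:CombOnM}, Proposition~\ref{prop:MultSumIneq}, Lemmas~\ref{lem:RectangelAndLine} and~\ref{lem:TooBigIndSums}, assembled in Lemmas~\ref{lem:MinMax} and~\ref{lem:difference2Case}) then show the two filtrations cannot coexist if two of the $\varepsilon_{g_i}(u)$ equal $1$ and one is negative. No version of this argument, nor any substitute for it, appears in your proposal, so there is a genuine and central gap.

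Two further inaccuracies in the set-up. First, the claim that ``each $p$-singular class of $G$ is supported on a unique such block'' is not correct: conjugacy classes are not apportioned among blocks, and every block of defect $1$ with the Sylow $p$-subgroup as defect group sees all $e$ classes of order $p$ through its exceptional characters. What the paper actually exploits is the opposite: the principal block alone already determines every $\varepsilon_{g_i}(u)$, because the eigenvalue multiplicities of $u$ on the exceptional character recover the individual partial augmentations (Lemma~\ref{lem:CharValOfU}). Second, invoking the full ``reversal'' criterion Theorem~\ref{thm:reversaloflatticemethod} is the wrong tool for this direction: the main theorem needs only the necessary conditions supplied by the lattice method, and even if you did route the argument through the if-and-only-if criterion, you would be left with precisely the combinatorial non-existence verification that the proposal does not attempt.
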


We prove this result using the \emph{lattice method} developed in \cite{BachleMargolisLattice, BachleMargolis4primaryII}, combined with the theory of blocks of defect 1. This method considers the restrictions of various irreducible $\OO G$-lattices, for a suitable complete discrete valuation ring $\OO$, to $\OO\langle u \rangle$, where $u$ is assumed to be a counterexample to the theorem, and then places various constraints on the reductions of those lattices modulo the uniformizr of $\OO$. This requires understanding the representation theory of a cyclic group of order $p$ over $\OO$, which is known to be wild if the ramification index of $\OO$ is bigger than $2$ (barring a few exceptions). While we cannot avoid considering rings $\OO$ over which the representation theory is wild, it turns out that the particular question which is relevant in the context of the lattice method can be answered regardless (see Theorem~\ref{thm:filtration}). This allows us to expand on the results of \cite{CaicedoMargolis} to prove the theorem above.

While there are many positive results on Conjecture~\ref{conj:zassenhaus} for solvable groups, the situation is much more bleak when it comes to non-solvable groups and, in particular, non-abelian simple groups. Here it is only known to hold for groups of type $\operatorname{PSL}(2,q)$ when $q \leq 32$ or $q$ is a Fermat or  Mersenne prime \cite{MargolisdelRioSerrano}. There are two problems when trying to generalize this to all groups of type $\operatorname{PSL}(2,q)$: units of order divisible by the defining characteristic when $q$ is at least the third power of a prime, and  units of order $2t$ for $t$ a prime different from the defining characteristic. In the latter case the known methods were shown to fail in \cite{delRioSerrano17}. We can overcome this difficulty under the assumption that $t^2$ does not divide the order of the group, cf. Theorem~\ref{th:PSL2Ord2t}. This allows us to prove:

\begin{corollary}[{see Corollary~\ref{cor:zassenhauspsl}}]\label{cor:zassenhauspslIntro}
Let $p$ be a prime and set $q=p$ or $q=p^2$. Assume that there exists a prime $t$ so that $q-1 = 4t$ or $q+1 = 4t$. Then the Zassenhaus conjecture holds for $\operatorname{PSL}(2,q)$.
\end{corollary}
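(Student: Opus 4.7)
The plan is to enumerate the possible orders of a torsion unit in $V(\Z G)$, for $G = \PSL(2,q)$ under the hypotheses, and to dispatch each case by a known result on $\PSL(2,q)$ or by Theorem~\ref{th:PSL2Ord2t}.

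First I would record the classical restrictions on the order $n$ of a torsion unit $u \in V(\Z G)$. The element orders of $\PSL(2,q)$ are $p$ together with the divisors of $(q-1)/2$ and $(q+1)/2$, and the hypothesis $q \pm 1 = 4t$ makes one of these two torus orders equal to $2t$ and the other odd and coprime to $2t$. By Cohn-Livingstone, $n$ divides the exponent of $G$, and the prime-graph results known for $\PSL(2,q)$ further constrain $n$. After this reduction I would appeal to the existing literature on $\PSL(2, p)$ and $\PSL(2, p^2)$ (the HeLP method together with the results of Wagner, Hertweck, Luthar-Passman, and Margolis-del R\'{\i}o-Serrano) to resolve the Zassenhaus conjecture for every order $n$ save one, namely $n = 2t$, which is the obstruction highlighted in the introduction.

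To close this remaining case I would apply Theorem~\ref{th:PSL2Ord2t}, whose hypothesis is $t^2 \nmid |G|$. For $t$ an odd prime this is immediate from $|G| = q(q-1)(q+1)/2$ and $q = 4t \pm 1$: the prime $t$ divides only the torus factor $(q \mp 1)/2 = 2t$, with multiplicity one, being coprime both to $q$ and to the other torus factor $(q \pm 1)/2 = 2t \pm 1$. The degenerate case $t = 2$ forces $q \in \{7, 9\}$ under the hypotheses (a short arithmetic check using $(p-1)(p+1) = 4t$ also confirms that when $q = p^2$ the only possibility is $q = 9$), and for both $\PSL(2, 7)$ and $\PSL(2, 9) \cong A_6$ the Zassenhaus conjecture is classical.

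The hard part is the order-$2t$ case, which is precisely what Theorem~\ref{th:PSL2Ord2t} is designed to overcome; the restriction to $q = p$ or $q = p^2$ is essential because for $q = p^e$ with $e \geq 3$ units of order divisible by the defining characteristic are not yet controlled by the existing methods, so one cannot hope to shrink the list of orders to the single one that Theorem~\ref{th:PSL2Ord2t} handles.
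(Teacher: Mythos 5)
Your proposal is correct and follows essentially the same strategy as the paper's proof of Corollary~\ref{cor:zassenhauspsl}: use the known spectrum results to limit the possible orders of torsion units, dispatch each order other than $2t$ via the existing literature (including the separate case $t=2$, which forces $q\in\{7,9\}$), and apply Theorem~\ref{th:PSL2Ord2t} to close the remaining order-$2t$ case. Your explicit verification that $t^2\nmid |G|$ and the observation that $q=p^2$ forces $t=2$ are correct supporting details that the paper leaves implicit.
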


It is part of a number-theoretical conjecture that there are infinitely many primes satisfying the condition of Corollary~\ref{cor:zassenhauspslIntro}.

Apart from these positive results, we also show how the search for counterexamples to Conjecture~\ref{conj:zassenhaus} in the $p$-local case can be reduced to a purely combinatorial problem when $p^2$ does not divide the order of the group $G$. More generally, this applies to the construction of units of order divisible by $p$ in blocks of $\Z_p G$ of defect $1$  (asking that $p^2$ does not divide $|G|$ simply ensures that there are no blocks of higher defect). The structure of a block of defect 1 is decribed by its \emph{Brauer tree}. 
To construct a unit of order $pr$ in such a block, where $r$ is co-prime to $p$, it suffices to attach a tuple of Young diagrams to each vertex and each edge of the Brauer tree. This corresponds to assigning $\bar \F_p C_{pr}$-modules to all vertices and edges. One then has to check that each $\bar \F_p C_{pr}$-module attached to a vertex of the Brauer tree is filtered in a particular way by the modules attached to the adjacent edges. One also has to check that there is a second filtration determined by the multiplicities of the different eigenvalues of the unit on the ordinary representation corresponding to the vertex.
Checking the existence of these two filtrations is an entirely combinatorial procedure. The detailed conditions are given in Theorem~\ref{thm:reversaloflatticemethod}. We hope that this procedure will eventually lead to new counterexamples to Conjecture~\ref{conj:zassenhaus}, in particular in non-solvable groups (see Proposition~\ref{prop:CliffWeissApp}). In Example~\ref{ex:PSL216} we provide a unit $u\in \V(\Z_{(3,5)}\PSL(2,16))$ of order $15$ which is a $3$- and $5$-local counterexample to Conjecture~\ref{conj:zassenhaus}. It is known that such a unit cannot exist globally, i.e. in $\V(\Z G)$, but curiously the obstruction is $2$-local even though $2$ does not divide the order of $u$.


\section{Basic facts and notation}

In this section we summarize a few basic facts from  various subject areas we will need later. Throughout the whole article $p$ denotes a prime, $G$ a finite group and $R$ a commutative ring with identity. The units of a ring $S$ are denotes as $\U(S)$. For a field $K$ we denote by $\bar{K}$ the algebraic closure of $K$. We denote by $\mathbb{Z}_p$ the $p$-adic integers and by $\mathbb{Z}_{(p)}$ the localization of $\mathbb{Z}$ at $p$. Complete discrete valuation rings are always assumed to be of characteristic $0$. If $\mathcal{O}$ is a complete discrete valuation ring with residue field of characteristic $p>0$, then we define the \emph{ramification index of $\mathcal{O}$} as the ramification index of the ideal $p\mathcal{O}$ with respect to the extension $\mathcal{O}/\mathbb{Z}_p$. Moreover, for a character $\chi$ we denote by $\mathcal{O}[\chi]$ the smallest ring containing $\mathcal{O}$ and the values of $\chi$.

\subsection{Torsion units in integral group rings}\label{section:torsionunits}
For a group ring $RG$ we denote by
\begin{align*}
\varepsilon : &\  RG \longrightarrow R \\
& \sum_{g \in G} r_gg \mapsto \sum_{g \in G} r_g
\end{align*}
the \emph{augmentation map}. The subgroup of $\U(RG)$ consisting of units of augmentation $1$ is called the group of \emph{normalized units} and denoted as $\V(RG)$. It is easy to see that $\U(RG) \cong \U(R) \times \V(RG)$, so that for most questions on units of group rings it suffices to consider normalized units. The Zassenhaus conjecture is then equivalent to the claim that each unit $u$ of finite order in $\V(\mathbb{Z}G)$ is conjugate in $\U(\mathbb{Q}G)$ to an element of $G$. This fact is also expressed by saying that $u$ is \emph{rationally conjugate} to an element of $G$.

For an element $x \in G$ denote by $x^G$ the conjugacy class of $x$ in $G$. We denote by 
\begin{align*}
\varepsilon_x: &RG \rightarrow R \\
& \sum_{g \in G} r_gg \mapsto \sum_{g \in x^G} r_g
\end{align*}
the \emph{partial augmentation} at $x$. Partial augmentations are a fundamental tool in the study of torsion units of integral group rings and we cite the basic results about them that we will need later:

\begin{theorem}\cite[Theorem 2.5]{MarciniakRitterSehgalWeiss}\label{th:MRSW}
Let $u \in \V(\mathbb{Z}G)$ be a unit of finite order $n$. Then $u$ is rationally conjugate to an element of $G$ if and only if $\varepsilon_x(u^d) \geq 0$ for all $x \in G$ and all divisors $d$ of $n$.
\end{theorem}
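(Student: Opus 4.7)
The ``only if'' direction is routine: if $u=\alpha g\alpha^{-1}$ with $\alpha\in\U(\Q G)$, then $u^d=\alpha g^d\alpha^{-1}$, and partial augmentations are invariant under conjugation in $\Q G$ because they read off the coefficients of an element in the class-sum basis of the center $Z(\Q G)$. Hence $\varepsilon_x(u^d)=\varepsilon_x(g^d)\in\{0,1\}$ for every class representative $x$.

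For the ``if'' direction, the crucial initial observation is that partial augmentations of elements of $\Z G$ are integers. Combined with $\sum_x\varepsilon_x(u^d)=\varepsilon(u)^d=1$, the hypothesis $\varepsilon_x(u^d)\geq 0$ forces, for each $d\mid n$, the existence of a \emph{unique} conjugacy class $C_d$ with $\varepsilon_{C_d}(u^d)=1$, all other partial augmentations of $u^d$ vanishing. Fix $g\in C_1$; the plan is to prove that $u$ is $\Q G$-conjugate to this specific $g$.

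The reduction I would use is that $u$ and $g$ are conjugate in $\Q G$ if and only if $\chi(u^d)=\chi(g^d)$ for every $\chi\in\Irr(G)$ and every $d\geq 1$. Indeed, under the Wedderburn decomposition $\C G\cong\prod_\chi\Mat_{n_\chi}(\C)$ the conjugacy class of an element in each simple factor is determined by its characteristic polynomial, which in turn is determined by the traces of all its powers; so character equality on all powers forces $\C G$-conjugacy and hence $\Q G$-conjugacy (since both $u$ and $g$ lie in $\Q G$). Using the identity $\chi(u^d)=\sum_x\varepsilon_x(u^d)\chi(x)=\chi(c_d)$ for any $c_d\in C_d$, the task reduces to verifying $C_d=(g^d)^G$ for every $d\mid n$.

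This compatibility of the classes $C_d$ with the powers of the chosen $g$ is the main obstacle. As a first step, the Berman--Higman theorem (the identity partial augmentation of a non-trivial torsion unit vanishes), applied to each $u^d$, shows that $C_d=\{e\}$ precisely when $u^d=1$; pairing this with the regular character via $\chi_{\mathrm{reg}}(v)=|G|\,\varepsilon_1(v)$ pins the order of $g$ down to exactly $n$. For the general compatibility I would induct on $n$: for each prime $\ell\mid n$ the unit $u^{\ell}$ has order $n/\ell$ and still satisfies the MRSW hypothesis, so induction provides $h_\ell\in G$ with $u^{\ell}$ conjugate to $h_\ell$ in $\Q G$, and one needs to show $h_\ell\sim_G g^\ell$. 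The delicate step is combining these $\ell$-local rational conjugacies across all primes $\ell\mid n$; here one invokes Brauer character theory in each $p$-adic completion $\Z_\ell G$ together with Weiss-type rigidity results for lattices over the cyclic group $\langle u\rangle$ to align the $\ell$-local data into a coherent assertion $C_d=(g^d)^G$ for every $d\mid n$. Packaging this local information globally is where the substantive content of the theorem lies.
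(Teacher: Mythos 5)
The statement you are proving is quoted in the paper as \cite[Theorem 2.5]{MarciniakRitterSehgalWeiss} and the paper supplies no proof of its own, so there is no argument in the text to compare against; I will assess your attempt on its own merits.

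Your ``only if'' direction is correct, and so is your reduction in the ``if'' direction: the integrality and non-negativity of the partial augmentations of the $u^d$ give a unique class $C_d$ with $\varepsilon_{C_d}(u^d)=1$, and $\Q G$-conjugacy of $u$ to $g\in C_1$ is indeed equivalent to $\chi(u^d)=\chi(g^d)$ for all $\chi$ and all $d$, which after substituting $\chi(u^d)=\chi(C_d)$ becomes the assertion $C_d=(g^d)^G$ for all $d\mid n$. You have correctly identified that this \emph{coherence of the classes $C_d$ with the power map} is the crux.

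However, that crux is exactly what your attempt does not prove. The step ``pinning the order of $g$ down to exactly $n$'' via Berman--Higman and $\chi_{\mathrm{reg}}$ is circular as written: to conclude that $g^m=1$ would force $C_m=\{1\}$, you already need $C_m=(g^m)^G$, which is the coherence you are trying to establish. More seriously, the final paragraph reduces the problem by induction to showing $h_\ell\sim_G g^\ell$ for each prime $\ell\mid n$, and then explicitly defers: the phrase ``one invokes Brauer character theory in each $p$-adic completion together with Weiss-type rigidity results \ldots\ Packaging this local information globally is where the substantive content of the theorem lies'' is an acknowledgement that the heart of the proof is missing, not a proof of it. Moreover those particular tools are not the right ones here: the Marciniak--Ritter--Sehgal--Weiss argument is an ordinary character-theoretic argument over $\C$ (using non-negativity of eigenvalue multiplicities in complex representations), and it neither requires nor uses $p$-modular theory or Weiss's $p$-adic rigidity theorem, which concerns the much deeper question of conjugating whole $p$-subgroups of $\V(\Z G)$ into $G$ for nilpotent $G$. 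As it stands, the proposal is a correct reduction followed by a gap at the decisive step, together with a misattribution of the tools needed to close it.
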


\begin{theorem}\label{th:pAsofTorsionUnits}
Let $u \in \V(\mathbb{Z}G)$ be of order $n$.
\begin{itemize}
\item[(i)] If $u \neq 1$, then $\varepsilon_1(u) = 0$ (Berman-Higman Theorem) \cite[Proposition 1.5.1]{GRG1}.
\item[(ii)] Let $g\in G$ such that $\varepsilon_g(u) \neq 0$. Then the order of $g$ divides $n$ \cite[Proposition 2.2]{HertweckBrauer}.
\end{itemize}
\end{theorem}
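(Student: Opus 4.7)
For part (i), I would apply the classical Berman--Higman argument via the regular representation of $G$. Let $u$ act on $\C G$ by left multiplication; since for $g \neq 1$ the permutation $x \mapsto gx$ of the standard basis has no fixed point, the trace of this action equals $|G|\cdot \varepsilon_1(u)$. As $u$ has finite order it is diagonalisable with eigenvalues that are roots of unity, so the trace has modulus at most $|G|$, giving $|\varepsilon_1(u)| \leq 1$. Combined with $\varepsilon_1(u) \in \Z$ this leaves only the possibilities $\varepsilon_1(u) \in \{-1,0,1\}$. To rule out $\pm 1$, I would invoke the equality case of the triangle inequality for sums of roots of unity: equality forces every eigenvalue of $u$ on $\C G$ to equal one single root of unity $\zeta$. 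Applying $u$ to $\sum_{g\in G} g$ gives back $\sum_g g$ (since $\varepsilon(u)=1$), so $1$ is an eigenvalue, hence $\zeta=1$; but then $u$ acts as the identity on the faithful module $\C G$, contradicting $u \neq 1$.

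For part (ii), my plan is to localise at an auxiliary prime. Suppose $|g|$ does not divide $n$ and fix a prime $p$ such that $v_p(|g|) > v_p(n)$. In $\Z_p G$ the unit $u$ admits a canonical decomposition $u = u_{p'} u_p$ into commuting factors of $p'$- and $p$-power order, and $u_{p'}$ has order equal to the $p'$-part of $n$. The strategy is to deduce $\varepsilon_g(u)=0$ by expressing the partial augmentation at $g$ in terms of partial augmentations of $u_{p'}$ at $p$-regular classes and values of Brauer characters, using the standard Brauer-character formula relating the reduction of $u$ in $\F_p G$ to that of $u_{p'}$. Under the choice of $p$, the class of $g$ cannot coincide with that of the $p'$-part of any element whose order divides that of $u_{p'}$, and the relevant sums collapse to yield $\varepsilon_g(u)=0$.

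Of the two parts, (i) reduces to a short calculation followed by a triangle-inequality argument. The main obstacle lies in part (ii): setting up the precise bookkeeping that relates $\varepsilon_g(u)$ to Brauer characters of $u_{p'}$, which is the content of Hertweck's approach in \cite{HertweckBrauer}. Once this identification is in place, the vanishing is forced by the numerical choice of $p$.
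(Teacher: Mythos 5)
This statement is quoted in the paper as a known result with references ([GRG1, Proposition~1.5.1] for (i), [HertweckBrauer, Proposition~2.2] for (ii)); the paper supplies no proof of its own, so your attempt can only be measured against the cited sources. Your argument for (i) is complete and correct, and it is exactly the standard Berman--Higman proof: computing the trace of left multiplication by $u$ on $\C G$, bounding it by $|G|$ via diagonalisability, and using the equality case of the triangle inequality together with the eigenvector $\sum_{g\in G} g$ (eigenvalue $1$ since $\varepsilon(u)=1$) and faithfulness of the regular representation to exclude $\varepsilon_1(u)=\pm 1$. Nothing to add there.

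Part (ii), however, is only a strategy outline, and the mechanism you describe does not close the argument. You correctly reduce to showing $v_p(|g|)\leq v_p(n)$ for each prime $p$ and correctly pass to $\Z_pG$ with the commuting decomposition $u=u_pu_{p'}$. But the proposed finish --- ``expressing the partial augmentation at $g$ in terms of partial augmentations of $u_{p'}$ at $p$-regular classes and values of Brauer characters'' so that ``the relevant sums collapse'' --- cannot work as stated. Brauer characters are class functions on the $p$-regular elements only, and the orthogonality relations for Brauer characters therefore only isolate partial augmentations at $p$-regular classes. Your element $g$ is by construction $p$-singular (you chose $p$ with $v_p(|g|)>v_p(n)\geq 0$), so $\varepsilon_g(u)$ is precisely one of the quantities that the Brauer-character formalism does not see; the observation that $g$ is not conjugate to the $p'$-part of anything tells you nothing about $\varepsilon_g(u)$. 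Hertweck's actual proof requires a genuinely nontrivial input about the $p$-singular classes, namely that the partial augmentations of $u$ vanish on classes whose $p$-part has order exceeding that of $u_p$; this is the heart of the matter, it is established by a separate $p$-adic argument (not by a collapse of a Brauer-character sum), and it is exactly the step your proposal leaves open. To your credit, you flag part (ii) as the main obstacle, but as written the proposal for (ii) assumes the conclusion in the guise of ``bookkeeping.''
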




Let $\ell$ be a positive integer and $K$ a field of characteristic $0$. Denote by $M_\ell(K)$ the ring of $\ell \times \ell$-matrices over $K$ and by $\operatorname{GL}_\ell(K)$ the group of invertible elements of $M_\ell(K)$. Consider a group representation $D: G \rightarrow \operatorname{GL}_\ell(K)$ with associated character $\chi$. Then $D$ naturally extends to a ring homomorphism $\mathbb{Z}G \rightarrow M_\ell(K)$ which in turn restricts to a group homomorphism $\U(\mathbb{Z}G) \rightarrow \operatorname{GL}_\ell(K)$. This allows us to extend $\chi$ to $\U(\mathbb{Z}G)$ in a linear way so that for $u \in \U(\mathbb{Z}G)$ we have
\begin{equation}\label{eq:CharactersAndPartAugs}
\chi(u) = \sum_{g^G} \varepsilon_g(u) \chi(g),
\end{equation}
where ``$\sum_{g^G}$'' denotes a sum over representatives for the conjugacy classes of $G$. If $u$ has order $n$, the matrix $D(u)$ has order divisible by $n$ so that it is diagonalizable over $\bar{K}$ and all the eigenvalues of $D(u)$ are $n$-th roots of unity. For $\zeta$ any $n$-th root of unity we denote by $\mu(\zeta,u,\chi)$ the multiplicity of $\zeta$ as an eigenvalue of $D(u)$. We use the standard notation ${\rm Tr}_{L/K}$ for the trace map on an algebraic field extension $L/K$. The following is essentially a reformulation of the second orthogonality relation of characters which is very useful to calculate multiplicities of eigenvalues.

\begin{proposition}\cite{LutharPassi1989}\label{pr:luthar-passi-multiplicity-formula}
   Let $u$ in $\V(\mathbb Z G)$ be of order $n$,  $\zeta\in \mathbb C$ an $n$-th root of unity and $\chi$ an ordinary character of $G$. Then 
   \[
       \mu(\zeta, u, \chi) =\frac{1}{n}\sum_{d|n} {\rm Tr}_{\mathbb Q(\zeta^d)/\mathbb Q} (\chi(u^d)\zeta^{-d})
   \] 
\end{proposition}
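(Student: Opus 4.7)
The plan is to invert the expansion of $\chi(u^k)$ in the eigenvalues of $D(u)$ via discrete Fourier analysis on the cyclic group $\langle u\rangle$, and then to regroup the resulting sum by Galois orbits.

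First, because $D(u)$ has order dividing $n$ it is diagonalizable over $\bar K$ with all eigenvalues being $n$-th roots of unity, so for every $k$ one has the eigenvalue expansion $\chi(u^k) = \sum_{\xi^n=1}\mu(\xi,u,\chi)\,\xi^k$. Multiplying by $\zeta^{-k}$, averaging over $k\in\{0,1,\dots,n-1\}$, and applying the standard orthogonality relation $\frac{1}{n}\sum_{k=0}^{n-1}\xi^k\zeta^{-k}=\delta_{\xi,\zeta}$ for the linear characters of $\Z/n\Z$ immediately yields $\mu(\zeta,u,\chi) = \frac{1}{n}\sum_{k=0}^{n-1}\chi(u^k)\,\zeta^{-k}$. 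This is already a valid identity; the remaining task is to rewrite it in the divisor-indexed form of the proposition.

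Second, I would partition the sum over $k$ according to $d := \gcd(k,n)$, writing $k=dj$ with $j$ ranging over residues modulo $n/d$ coprime to $n/d$. Taking $\zeta$ to be a primitive $n$-th root of unity, $\zeta^d$ is a primitive $(n/d)$-th root of unity, and $\Gal(\Q(\zeta^d)/\Q) \cong (\Z/(n/d))^\times$ acts via $\sigma_j:\zeta^d\mapsto (\zeta^d)^j$. The plan is then to recognize the inner sum $\sum_j \chi(u^{dj})(\zeta^d)^{-j}$ as the Galois trace $\Tr_{\Q(\zeta^d)/\Q}\bigl(\chi(u^d)\zeta^{-d}\bigr)$, which, summed over divisors $d$ of $n$, produces precisely the displayed formula.

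The only step that is not pure bookkeeping --- and hence the main obstacle --- is verifying the Galois-equivariance $\sigma_j(\chi(u^d)) = \chi(u^{dj})$. I would prove this by restricting $\chi$ to the cyclic subgroup $\langle u^d\rangle$, on which it decomposes as a $\Z$-linear combination of one-dimensional characters $\lambda$, each satisfying $\lambda(u^d)^j = \lambda((u^d)^j)$; in other words, Galois conjugation on the relevant roots of unity corresponds to exponentiation in $\langle u^d\rangle$. Granting this, one has $\chi(u^{dj})\zeta^{-dj} = \sigma_j\bigl(\chi(u^d)\zeta^{-d}\bigr)$, and the inner sum collapses to the desired trace, completing the proof.
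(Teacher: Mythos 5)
The paper does not actually prove this proposition; it is attributed to Luthar and Passi and used as a black box, so there is no internal proof to compare against. Your argument is the standard one and is correct: Fourier inversion on $\langle u\rangle$ gives $\mu(\zeta,u,\chi)=\frac{1}{n}\sum_{k=0}^{n-1}\chi(u^k)\zeta^{-k}$; grouping $k$ by $d=\gcd(k,n)$ produces the Galois orbits; and the needed equivariance $\sigma_j(\chi(u^d))=\chi(u^{dj})$ holds because $\chi|_{\langle u\rangle}$ is a nonnegative integer combination of linear characters, whose values are roots of unity on which the Galois action is exactly exponentiation. One small point worth flagging: you (correctly) take $\zeta$ to be a \emph{primitive} $n$-th root of unity, which the proposition's wording does not actually require. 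For non-primitive $\zeta$ the expression $\operatorname{Tr}_{\Q(\zeta^d)/\Q}(\chi(u^d)\zeta^{-d})$ is not even well-posed in general, since $\chi(u^d)\zeta^{-d}$ need not lie in $\Q(\zeta^d)$, and the index set $\{j\}$ from your partition is $(\Z/(n/d))^{\times}$, not $\Gal(\Q(\zeta^d)/\Q)$. The correct reading for $\mu(\zeta_0^{l},u,\chi)$ keeps a fixed primitive $\zeta_0$ and uses $\operatorname{Tr}_{\Q(\zeta_0^d)/\Q}(\chi(u^d)\zeta_0^{-dl})$, which is in fact how the authors apply the formula later in the paper; your proof establishes exactly this version.
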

Note that \eqref{eq:CharactersAndPartAugs} allows us to replace the values $\chi(u^d)$ in Proposition~\ref{pr:luthar-passi-multiplicity-formula} with expressions only involving the partial augmentations of powers of $u$ and the values of $\chi$ at elements of $G$.

\subsection{Modules of cyclic groups and combinatorics}
We refer to \cite{CaicedoMargolis} for precise references for all facts mentioned in this section. Throughout this section we fix a prime $p$ and denote by $k$ a field of characteristic $p$ and by $\mathcal{O}$ a complete discrete valuation ring with residue field of characteristic $p$. 
We first describe the general module theory for cyclic groups over $k$ and $\mathcal{O}$. Throughout this paper, all modules are assumed to be finitely generated.

\begin{proposition}\label{prop:kCpModules}
Let $n$ be a positive integer, $C$ a cyclic group of order $p^n$ and $M$ an indecomposable $kC$-module. Then $M$ is uniserial of dimension at most $p^n$ and the dimension of $M$ characterizes $M$ up to isomorphism. Moreover, for any positive integer $\ell$ with $\ell \leq p^n$ there is an indecomposable $kC$-module of dimension $\ell$. 
\end{proposition}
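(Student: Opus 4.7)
The plan is to reduce the problem to the module theory of a truncated polynomial ring. Write $C = \langle g \rangle$ with $g$ of order $p^n$. Since $\mathrm{char}(k) = p$, the binomial expansion gives
\[
 g^{p^n}-1 = (g-1)^{p^n},
\]
so, setting $x := g - 1$, there is an isomorphism of $k$-algebras
\[
 kC \;=\; k[g]/(g^{p^n}-1) \;\cong\; k[x]/(x^{p^n}).
\]
This is a local commutative ring whose maximal ideal $(x)$ is nilpotent of nilpotency index exactly $p^n$, and its lattice of ideals is the chain $0 \subset (x^{p^n-1}) \subset (x^{p^n-2}) \subset \cdots \subset (x) \subset k[x]/(x^{p^n})$. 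In particular $kC$ is itself a uniserial ring.

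The next step is to invoke the structure theorem for finitely generated modules over the principal ideal domain $k[x]$: any finitely generated $k[x]$-module is a finite direct sum of cyclic modules $k[x]/(f)$ for monic polynomials $f$. A finitely generated $kC$-module is exactly a finitely generated $k[x]$-module annihilated by $x^{p^n}$, so each summand must have the form $k[x]/(x^\ell)$ with $1 \leq \ell \leq p^n$. Each such cyclic module $M_\ell := k[x]/(x^\ell)$ is indecomposable (being uniserial over the local ring $kC$, since its submodules form the chain $0 \subset (x^{\ell-1}) \subset \cdots \subset (x) \subset M_\ell$) and has $k$-dimension equal to $\ell$.

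This already yields all claims in the proposition: the indecomposable summands are precisely the $M_\ell$ for $1 \leq \ell \leq p^n$; each is uniserial of $k$-dimension at most $p^n$; the dimension $\ell$ determines the isomorphism class $M_\ell$; and every $\ell$ in the range $1\leq \ell \leq p^n$ is realised. I do not anticipate a genuine obstacle, only a notational choice of whether to derive the structure from the PID theorem over $k[x]$ or directly from the fact that $kC$ is a uniserial (hence serial) local ring, in which case one can quote that every finitely generated module over a uniserial ring is a direct sum of cyclic uniserial modules. Either route is routine; the only thing worth highlighting is the identity $(g-1)^{p^n} = g^{p^n}-1$, which is the one place where characteristic $p$ is actually used.
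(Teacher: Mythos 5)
Your argument is correct. The paper states Proposition~\ref{prop:kCpModules} without proof (referring the reader to \cite{CaicedoMargolis} for references), so there is no in-text proof to compare against; but your derivation --- rewriting $kC\cong k[x]/(x^{p^n})$ via the characteristic-$p$ identity $(g-1)^{p^n}=g^{p^n}-1$ and then invoking the structure theorem for finitely generated $k[x]$-modules (all elementary divisors being powers of $x$ dividing $x^{p^n}$) --- is the standard and complete argument for this well-known fact.
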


\subsection*{Notation} For a cyclic $p$-group $C$ and $\ell$ a positive integer such that $\ell \leq |C|$ we denote by $I_\ell$ the indecomposable $kC$-module of dimension $\ell$ which by Proposition~\ref{prop:kCpModules} is a well-defined object.

%

\begin{proposition}\label{prop:CyclicGroupModulesp'Decomposition}
Let $C = \langle g \rangle$ be cyclic group of order $p^n\ell$ where $\ell$ is an integer not divisible by $p$. Assume $R = k$ or $R = \mathcal{O}$ such that $R$ contains a primitive $\ell$-th root of unity $\xi$. Then any $RC$-module $M$ decomposes as 
\[M = M_1 \oplus M_2 \oplus ... \oplus M_\ell \]
where $M_i$ is the maximal $R$-subspace of $M$ on which the $p'$-part of $g$ acts as $\xi^i$.  
\end{proposition}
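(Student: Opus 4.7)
The plan is to reduce to a decomposition of the algebra $R\langle h\rangle$, where $h=g^{p^n}$ is the $p'$-part of $g$, by constructing the usual orthogonal central idempotents associated with the characters of $\langle h\rangle$. Since $\gcd(\ell,p)=1$, the integer $\ell$ is a unit in $R$ in both allowed cases: if $R=k$ has characteristic $p$, this is immediate, and if $R=\mathcal{O}$, then $\ell$ is a unit because $\mathcal{O}$ has residue characteristic $p$ and thus $\ell\notin p\mathcal{O}$, so $\ell$ avoids the unique maximal ideal of the dvr. Together with the hypothesis that $R$ contains a primitive $\ell$-th root of unity $\xi$, this allows me to form the elements
\[
e_i = \frac{1}{\ell}\sum_{j=0}^{\ell-1}\xi^{-ij}h^j \in R\langle h\rangle, \qquad i=1,\dots,\ell.
\]

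First I would verify, by a direct computation using the geometric series $\sum_{j=0}^{\ell-1}\xi^{(i'-i)j}=\ell\,\delta_{i,i'}$ (where the Kronecker delta is interpreted modulo $\ell$), that the $e_i$ are pairwise orthogonal idempotents with $\sum_{i=1}^{\ell}e_i = 1$, and that $h\cdot e_i = \xi^i e_i$. Because $C$ is abelian, the element $h$ is central in $RC$, so each $e_i$ is a central idempotent of $RC$. Consequently, for any $RC$-module $M$ one obtains the internal direct sum decomposition
\[
M = \bigoplus_{i=1}^{\ell} e_i M,
\]
with each summand an $RC$-submodule, and on $e_i M$ the element $h$ acts as multiplication by $\xi^i$.

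Finally, I would identify $e_i M$ with the maximal $R$-submodule on which $h$ acts as $\xi^i$. The inclusion $e_iM\subseteq\{m\in M:hm=\xi^i m\}$ is immediate from $he_i=\xi^ie_i$. For the reverse inclusion, if $m\in M$ satisfies $hm=\xi^i m$, decompose $m = \sum_{j} e_j m$; then for each $j$ the component $e_j m$ satisfies $h(e_jm)=\xi^j(e_jm)$, and at the same time $h(e_jm) = e_j(hm) = \xi^i e_j m$. Since $\xi^i\neq \xi^j$ for $j\ne i$ in the group $\{\xi^1,\dots,\xi^\ell\}$ of distinct $\ell$-th roots of unity, and the difference $\xi^i-\xi^j$ is a unit in $R$ (for $R=k$ this is because nonzero elements are units; for $R=\mathcal{O}$ it is because distinct $\ell$-th roots of unity reduce to distinct elements of the residue field, as $\ell$ is coprime to $p$), we conclude $e_jm=0$ for $j\ne i$, so $m=e_im\in e_iM$.

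The only point requiring real care, and arguably the main obstacle, is this last invertibility of $\xi^i-\xi^j$ in the dvr case, since without it one could not separate eigenspaces; every other step is a routine manipulation of character idempotents. Once that is observed, the proposition follows.
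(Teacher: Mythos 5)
Your proof is correct and is the standard idempotent argument one would expect; the paper itself omits the proof and defers to the cited reference. One small slip: $h=g^{p^n}$ is \emph{not} in general the $p'$-part of $g$ (that would be $g^a$ with $a\equiv 0\pmod{p^n}$ and $a\equiv 1\pmod{\ell}$); however, $g^{p^n}$ and the true $p'$-part generate the same subgroup of order $\ell$, so your idempotents and the resulting eigenspace decomposition are identical, only the indexing of the summands shifts by a unit modulo $\ell$ — so the argument goes through unchanged once $h$ is taken to be the actual $p'$-part. Your attention to the invertibility of $\xi^i-\xi^j$ in the dvr case (via separability of $X^\ell-1$ over the residue field) is exactly the point that needs care, and you handle it correctly.
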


\subsection*{Notation} Propositions~\ref{prop:kCpModules} and \ref{prop:CyclicGroupModulesp'Decomposition} justify the following notation. For a cyclic group $C = \langle g \rangle$ of order $p^n\ell$ with $p \nmid \ell$, a field $k$ of characteristic $p$ containing a primitive $\ell$-th root of unity $\xi$ and a $kC$-module $M$ we denote by $\gamma_{j,i}(M)$ the number of indecomposable direct summands of $M$ of dimension at least $j$ on which the $p'$-part of $g$ acts as $\xi^i$.

We next introduce the combinatorics needed to understand extensions of modules of cyclic $p$-groups over $k$. We will be working with Young tableaux and using cardinal directions to express locations inside these tableaux, e.g. ``northwest'' means upper left. Let $\lambda = (\lambda_1,\ldots,\lambda_r)$ be a partition and $\mu = (\mu_1,\ldots,\mu_s)$ a subpartition of $\lambda$, i.e. $s \leq r$ and $\mu_i \leq \lambda_i$ for any $i$. The \emph{skew diagram} associated to $\lambda/\mu$ is a Young diagram of shape $\lambda$ where a Young diagram of shape $\mu$ has been removed in the northwestern corner of the Young diagram of $\lambda$. A skew diagram is a \emph{skew tableau} if each of its boxes contains a positive integer as an entry. Let $T$ be a skew tableau. $T$ is called \emph{semi-standard} if entries are non-decreasing in each row, reading left-to-right, and entries are strictly increasing in each column reading top-to-bottom. $T$ is said to satisfy the \emph{lattice property} if for any box $b$ in $T$ reading the word $w$ from $T$ right-to-left and top-to-bottom the word we obtain when reading up until $b$ contains at least as many 1's as 2's, 2's as 3's, etc. Now let $w$ be the word obtained by reading all of $T$ is this manner, from the northeastern to the southwestern corner, and let $\nu_i$ be the number of times the letter $i$ appears in $w$. If $T$ satisfies the lattice property and $t$ is the maximal number appearing as a letter in $w$, then $\nu = (\nu_1,\ldots,\nu_t)$ is a partition of the number of boxes in $T$ known as the \emph{content} of $T$.

Now let $C$ be a cyclic group of order $p^n$ and $M$ a $kC$-module. By Proposition~\ref{prop:kCpModules} the isomorphism type of $M$ is determined by the dimensions of its indecomposable summands. If we order these dimensions in a non-increasing way, including multiplicities, we obtain a tuple $\lambda = (\lambda_1,\ldots,\lambda_r)$ which is a partition of the dimension of $M$ and each $\lambda_i \leq p^n$. We call $\lambda$ the \emph{partition associated to $M$}. The extension theory of $kC$-modules is then explained by the following combinatorial fact.

\begin{theorem}\label{th:ModulesAndTableaux}
Let $C$ be a cyclic group of order $p^n$ and let $M$, $U$ and $Q$ be $kC$-modules with associated partitions $\lambda$, $\mu$ and $\nu$, respectively. Then $M$ has a submodule $U'$ isomorphic to $U$ such that $M/U' \cong Q$ if and only if there exists a semi-standard skew tableau satisfying the lattice property of shape $\lambda/\mu$ and content $\nu$.
\end{theorem}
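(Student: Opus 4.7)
The plan is to recognise this as a classical consequence of Hall polynomial theory combined with the Littlewood--Richardson rule. The key identification is $kC\cong k[x]/(x^{p^n})$ (sending a generator $c$ to $1+x$), under which the indecomposable $I_\ell$ corresponds to $k[x]/(x^\ell)$. Hence $kC$-modules are classified up to isomorphism by their associated partitions (all of whose parts are at most $p^n$), and the question of whether $M$ has a submodule isomorphic to $U$ with quotient $Q$ depends only on the triple $(\lambda,\mu,\nu)$ and not on $k$.

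First I would reduce to the case of a finite field $k=\mathbb F_q$. In this setting, the number of submodules $U'\leq M$ of type $\mu$ with $M/U'$ of type $\nu$ is the Hall number, which by a classical theorem of Hall (see Macdonald, \emph{Symmetric Functions and Hall Polynomials}, Ch.~II) is given by the evaluation at $q$ of the Hall polynomial $g^\lambda_{\mu\nu}(t)\in\Z[t]$, whose leading term is $c^\lambda_{\mu\nu}\,t^{n(\lambda)-n(\mu)-n(\nu)}$ for $c^\lambda_{\mu\nu}$ the Littlewood--Richardson coefficient. In particular $g^\lambda_{\mu\nu}(q)\neq 0$ for some (equivalently all sufficiently large) $q$ if and only if $c^\lambda_{\mu\nu}\geq 1$. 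The Littlewood--Richardson rule identifies $c^\lambda_{\mu\nu}$ with the number of semi-standard skew tableaux of shape $\lambda/\mu$ and content $\nu$ satisfying the lattice property, closing the equivalence over finite fields.

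To descend to an arbitrary $k$, one makes the tableau-to-module direction explicit: starting from the standard model $M=\bigoplus_i k[x]/(x^{\lambda_i})$ with its evident $k$-basis indexed by the boxes of $\lambda$, each entry $i$ in a box of $\lambda/\mu$ prescribes, in a canonical way, which basis vector to append to a growing submodule $U'$ so that $U'$ has type $\mu$ and $M/U'$ has type $\nu$; the semi-standard and lattice conditions are precisely what ensure $U'$ is $x$-stable and that the successive factors of the resulting filtration have the right shape. Since only the scalars $0$ and $1$ enter, the construction is field-independent. Conversely, from any such $U'$ one reads off a tableau by recording the positions in $\lambda$ that lie outside $U'$ together with the row in the quotient where each appears, and one checks that this assignment lands in the set of Littlewood--Richardson tableaux.

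The main obstacle is making the bijection of the third step watertight in both directions: the assignment is natural but the verification of the lattice property (and its sufficiency for the filtration to have the prescribed successive quotients) is the combinatorial heart of the Littlewood--Richardson rule. I would either take this as a black box from Macdonald, or else verify it in the special case at hand by induction on the number of boxes in $\lambda/\mu$, peeling off one box at a time and using Proposition~\ref{prop:kCpModules} to control how the dimensions of the indecomposable summands of $U'$ and $M/U'$ can change after each extension.
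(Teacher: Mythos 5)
The paper does not actually prove this statement: it is quoted as a known fact, with the reader referred to \cite{CaicedoMargolis} for references, and ultimately it rests on exactly the classical Hall--Green--Klein theory you invoke. So your route is the ``same'' as the paper's in the sense that both defer to the Hall polynomial literature, and your reconstruction of that literature is essentially correct. Two points deserve care, though. First, the finite-field step alone does not suffice for the theorem as stated: $k$ here is the residue field of an arbitrary complete discrete valuation ring of characteristic-$p$ residue characteristic, so it may be infinite (e.g.\ $\bar\F_p$) or a fixed small finite field, and the leading-term statement for $g^\lambda_{\mu\nu}(t)$ only gives non-vanishing for \emph{sufficiently large} $q$ unless you additionally use non-negativity of the coefficients of the Hall polynomial. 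Second, and more importantly, your converse construction in the descent step --- ``read off a tableau by recording the positions in $\lambda$ that lie outside $U'$'' --- does not work verbatim: a submodule of type $\mu$ in $\bigoplus_i k[x]/(x^{\lambda_i})$ is in general not spanned by standard basis vectors, so there are no well-defined ``positions outside $U'$'' to record. The correct field-independent statement (existence of a submodule of type $\mu$ and cotype $\nu$ over any discrete valuation ring is equivalent to $c^\lambda_{\mu\nu}\neq 0$) is precisely T.~Klein's theorem, and that is the black box one should cite; with that citation in place your argument is complete, and the forward (tableau-to-submodule) construction you describe is indeed field-independent as claimed.
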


The following combinatorial lemma will simplify our representation theoretic problems.

\begin{lemma}\label{lem:SymmetryOfLRCoef}
Let $\lambda$, $\mu$ and $\nu$ be partitions. Then there is a semi-standard skew tableau satisfying the lattice property of shape $\lambda/\mu$ and content $\nu$ if and only if there is a semi-standard skew tableau satisfying the lattice property of shape $\lambda/\nu$ and content $\mu$.
\end{lemma}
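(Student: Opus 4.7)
The plan is to derive this symmetry from the representation-theoretic translation given by Theorem~\ref{th:ModulesAndTableaux}, using the self-duality of modules over a cyclic $p$-group.

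First, I would fix $n$ large enough that every part of $\lambda$ is at most $p^n$, and let $C$ be a cyclic group of order $p^n$ with generator $g$. By Theorem~\ref{th:ModulesAndTableaux} applied once, the existence of a semi-standard skew tableau of shape $\lambda/\mu$ and content $\nu$ satisfying the lattice property is equivalent to the existence of a $kC$-module $M$ whose associated partition is $\lambda$, together with a submodule $U' \leq M$ of associated partition $\mu$ such that $M/U'$ has associated partition $\nu$. Similarly, the condition on the right-hand side translates to the existence of such a $M$ containing a submodule of associated partition $\nu$ with quotient of associated partition $\mu$.

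Next I would invoke the self-duality of $kC$-modules. By Proposition~\ref{prop:kCpModules}, every indecomposable $kC$-module is isomorphic to $I_\ell = k[g]/(g-1)^\ell$ for some $\ell \leq p^n$. The $k$-linear dual $I_\ell^\ast = \Hom_k(I_\ell,k)$, made into a $kC$-module via $(g\cdot\phi)(m)=\phi(g^{-1}m)$, is again indecomposable of dimension $\ell$ (its socle is one-dimensional because $g-1$ still acts with nilpotency index exactly $\ell$), so $I_\ell^\ast \cong I_\ell$. By additivity of $(-)^\ast$, every $kC$-module is therefore self-dual, and in particular $U'{}^\ast \cong U'$ and $(M/U')^\ast \cong M/U'$ and $M^\ast \cong M$. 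Applying $(-)^\ast$ to the short exact sequence
\begin{equation*}
0 \longrightarrow U' \longrightarrow M \longrightarrow M/U' \longrightarrow 0
\end{equation*}
produces a short exact sequence $0 \to (M/U')^\ast \to M^\ast \to U'{}^\ast \to 0$, which, after the identifications above, exhibits $M$ as having a submodule of associated partition $\nu$ with quotient of associated partition $\mu$. Applying Theorem~\ref{th:ModulesAndTableaux} again in the reverse direction yields the desired skew tableau of shape $\lambda/\nu$ and content $\mu$. The argument is manifestly symmetric in $\mu$ and $\nu$, giving the ``only if'' direction; the ``if'' direction is the same argument with the roles of $\mu$ and $\nu$ reversed.

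The only real obstacle is the self-duality step, but this is essentially immediate once one observes that the Jordan type of $g-1$ is preserved under $k$-duality. Everything else is a direct application of the representation-theoretic translation already proved.
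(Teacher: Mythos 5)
Your argument is correct. Note that the paper does not actually supply a proof of this lemma: it is the symmetry of Littlewood--Richardson coefficients, $c^{\lambda}_{\mu\nu}=c^{\lambda}_{\nu\mu}$, which is stated as a known fact (the section opens by deferring to \cite{CaicedoMargolis} for references on all facts therein). The usual combinatorial justification is that $c^{\lambda}_{\mu\nu}$ is the coefficient of $s_{\lambda}$ in the product $s_{\mu}s_{\nu}$ of Schur functions, so symmetry follows from commutativity of the ring of symmetric functions. Your route is genuinely different: you push the statement through Theorem~\ref{th:ModulesAndTableaux} and use the self-duality of $kC_{p^n}$-modules. The self-duality step is sound --- each $I_{\ell}$ is uniserial, and $k$-duality sends a uniserial module to a uniserial module of the same length, so $I_{\ell}^{\ast}\cong I_{\ell}$; additivity then gives self-duality in general --- and dualizing the short exact sequence swaps the roles of submodule and quotient, which is exactly what is needed. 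One small point worth making explicit: you need all parts of $\lambda$ (and hence of $\mu$ and $\nu$, whenever a tableau exists) to be at most $p^n$ so that the corresponding $kC$-modules exist; you do address this by choosing $n$ large enough, and the containment $\nu\subseteq\lambda$ whenever the tableau on the left exists ensures $\nu$ also fits, so the appeal to Theorem~\ref{th:ModulesAndTableaux} in the reverse direction is legitimate. What this module-theoretic argument buys is that it stays entirely within the machinery already set up in the paper and requires no excursion into symmetric functions; what it costs is that it only establishes the symmetry when the parts of $\lambda$ are bounded (by $p^n$), though that is harmless since $n$ can be taken arbitrarily large.
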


Since Theorem~\ref{th:ModulesAndTableaux} is the only reason we need to work with skew tableaux, we will always assume that all skew tableaux are semi-standard and satisfy the lattice property. 

\subsection*{Notation} Let $T$ be a skew tableau with content $\nu = (\nu_1.,,,.\nu_t)$. Then we denote by $\gamma_j(T)$ the cardinality of $\{1 \leq i \leq t \ | \ \nu_i \geq j \}$. Note that this is essentially the same notation as for $kC$-modules for $C_{p^n\ell} \cong C = \langle g \rangle$ and $p\nmid \ell$. This is justified by Theorem~\ref{th:ModulesAndTableaux} if we think of $T$ as the skew tableau associated to a direct summand of $M$ on which the $p'$-part of $g$ acts as a certain fixed $\ell$-th root of unity.

\subsection{Orders and lattices} Let $\OO$ be a complete discrete valuation ring with residue field $k$ and field of fractions $K$. Let $\pi$ denote a uniformizer for $\OO$. An \emph{$\OO$-order} is an $\OO$-algebra which is free and finitely generated as an $\OO$-module. If $\Lambda$ is an $\OO$-order, then a $\Lambda$-module $L$ which is free and finitely generated as an $\OO$-module is called a \emph{$\Lambda$-lattice}. When we say that  $\Lambda$ is \emph{an order in} a $K$-algebra $A$, and $L$ is \emph{a $\Lambda$-lattice in} an $A$-module $V$, we always mean that they are \emph{full} lattices, that is, $\Lambda$ spans $A$ and $L$ spans $V$ as a $K$-vector space. In the present paper, we will only consider orders in semisimple $K$-algebras $A$, and we will assume that $A$ is semisimple for the remainder of this section.
We use the notation ``$J(\Lambda)$'' for the Jacobson radical of $\Lambda$. Note that the simple $\Lambda$-modules, the simple $\Lambda/\pi\Lambda$-modules and the simple $\Lambda/J(\Lambda)$-modules are the same. They should not be confused with the simple $A$-modules. 

An $\OO$-order $\Lambda$ in $A$ is called \emph{hereditary} if any $\Lambda$-lattice is projective. The order $\Lambda$ is called \emph{maximal} if it is not contained in any larger order within $A$. Maximal orders are hereditary, and hereditary orders have a well-developed structure theory \cite[{Chapter \S{9}}]{Reiner}. In particular, hereditary orders contain all central primitive idempotents of $A$, and are therefore direct products of hereditary orders in simple $K$-algebras.
We will concentrate on the case where no matrix algebras over non-commutative skew fields occur in the Wedderburn-Artin decomposition of $A$. A hereditary order in a matrix algebra $M_n(E)$, where $E$ is a field extension of $K$ of finite degree, looks as follows (up to conjugation):
\[
   \Gamma = \left(  \begin{array}{ccccc} 
      \EE^{n_1\times n_1} &  (\pi') ^{n_1\times n_2} & \cdots & (\pi') ^{n_1\times n_{m-1}} & (\pi') ^{n_1\times n_m}\\
      \EE^{n_2\times n_1} &  \EE ^{n_2\times n_2} & \cdots & (\pi') ^{n_2\times n_{m-1}} & (\pi') ^{n_2\times n_m}\\
      \vdots & \vdots&\ddots & \cdots &\vdots \\
      \EE^{n_{m-1}\times n_1} &  \EE ^{n_{m-1}\times n_2} & \cdots &\EE ^{n_{m-1}\times n_{m-1}} & (\pi') ^{n_{m-1}\times n_m}\\
      \EE^{n_{m}\times n_1} &  \EE ^{n_{m}\times n_2} & \cdots &\EE ^{n_{m}\times n_{m-1}} & \EE ^{n_{m}\times n_m}\\
   \end{array} \right) \subseteq M_n(E)
\]
where $\EE$ is the maximal order in $E$, $\pi'$ is a uniformizer for $\EE$, and $m,n_1,\ldots,n_m\in \N$ are such that $n=n_1+\ldots+n_m$. In particular, $Z(\Gamma) = \EE$ is a maximal order. All indecomposable lattices over $\Gamma$ are irreducible, that is, they are isomorphic to lattices in $E^{1\times n}$. The lattices inside $E^{1\times n}$ form a chain under inclusion, and there are exactly $m$ non-isomorphic ones, corresponding to the $m$ distinct types of rows in the depicition of $\Gamma$ above. We will make extensive use of the paper \cite{RoggenkampDefectOne}, which describes blocks of group algebras of defect 1 in terms of a hereditary order $\Gamma$ containing the block.

In the proof of Theorem~\ref{thm:filtration} we will use the \emph{Heller operator}, denoted $\Omega$, which is defined as the kernel of a projective cover. In particular, we will consider $\Omega_{\Lambda}(L)$ for $\Lambda$-lattices $L$ and $\Omega_{\Lambda/\pi\Lambda}(M)$ for $\Lambda/\pi\Lambda$-modules $M$. Note that the subscripts ``$\Lambda$'' and ``$\Lambda/\pi\Lambda$'' matter here, since even though we can view any $\Lambda/\pi\Lambda$-module as a $\Lambda$-module, the projective $\Lambda$-modules (which are lattices) are not the same as the projective $\Lambda/\pi\Lambda$-modules, which means that the projective covers differ. We will use the well-known fact that there are epimorphisms \cite[Section 2.6]{Benson}
\[
   \Hom_\Lambda(\Omega_\Lambda M, N) \twoheadrightarrow \Ext^1_\Lambda(M,N) \textrm{ and } \Hom_{\Lambda/\pi\Lambda}(\Omega_{\Lambda/\pi \Lambda} M, N)  \twoheadrightarrow  \Ext^1_{\Lambda/\pi\Lambda}(M,N).
\]
We will make use of the explicit construction given in  \cite[Section 2.6]{Benson} that takes an element of $\Hom(\Omega M, N)$ and produces an extension of $M$ by $N$.

\subsection{Blocks of cyclic defect}
We recall some facts about the representation theory of blocks of cyclic defect relevant to us. All of this can be found in \cite[Chapter VII]{Feit82}. We will mostly be concerned with blocks of defect $1$, and a more character-theoretic account of this special case can be found in \cite[Chapter 11]{Navarro98}.

Let $\OO$ be a complete discrete valuation ring with field of fractions $K$ and residue field $k$ of characteristic $p$. We assume that both $K$ and $k$ are splitting fields for ~$G$. Let $B$ be a $p$-block of $\mathcal{O} G$ with cyclic defect group $D \cong C_{p^n}$. Denote by ${\rm Irr}(B)$ the irreducible ordinary characters in $B$ and by ${\rm IBr}(B)$ the irreducible $p$-Brauer characters in $B$. The \emph{Brauer tree} $(V,E)$ associated to $B$ is a tree in the sense of graph theory, i.e. a loop-free non-oriented graph, in which one vertex known as the \emph{exceptional vertex} $v_x$ carries an additional label $\ell \in \mathbb{N}$ known as the \emph{multiplicity} of $v_x$. Moreover, there are maps
\[\alpha: {\rm Irr}(B) \rightarrow V, \ \ \beta: {\rm IBr}(B) \rightarrow E \]
such that $\beta$ is bijective, while $\alpha$ is surjective and the only element of $V$ which possibly has more than one preimage under $\alpha$ is $v_x$, namely $|\alpha^{-1}(v_x)| = \ell$. The preimages of $v_x$ are known as \emph{exceptional characters}, the other elements of ${\rm Irr}(B)$ as \emph{non-exceptional}. If $\ell = 1$ there is no exceptional character. If $L$ is an $\mathcal{O}G$-lattice with character $\chi \in {\rm Irr}(B)$, then the composition factors of $k \otimes_\mathcal{O} L$ are exactly the simple $kG$-modules which correspond to the edges adjacent to $\alpha(\chi)$ via $\beta$, each with multiplicity $1$. The Brauer tree is a connected graph and we call vertices adjacent to only one edge \emph{leafs}.
The characters of non-exceptional vertices are $p$-rational, while if we let $R$ be the ring obtained from adjoining the values of an exceptional character to $\mathbb{Z}_p$ the ramification index of $p$ in $R$ equals $p^{n-1} e$ for some divisor $e$ of $p-1$. Set $m = \frac{p-1}{e}$. Then $m$ equals the number of edges in the Brauer tree and $\ell = p^{n-1}m$. If $B$ is the principal block of $\mathcal{O}G$, then $e = [N_G(D):C_G(D)]$. 

Let $S$ be the subset of ${\rm Irr}(B)$ consisting of non-exceptional characters, fix an exceptional character $\theta$ and identify $V$ with $S \cup \{ \theta \}$. If we associate to each vertex $\psi$ a sign $ \delta_\psi \in \{ \pm 1\}$ in such a way that vertices connected by an edge have different signs, then for any element $g \in G$ of order not divisible by $p$ we have
\begin{align}\label{eq:AltSumOnP'Els}
0 = \delta_\theta \theta(g) + \sum_{\psi \in S} \delta_\psi \psi(g).
\end{align}
Note that in \cite[Chapter 11]{Navarro98} the sign at the exceptional vertex is reversed compared to our notation.

\section{Representation theory of $C_p$}

In this section $\OO$ denotes a complete discrete valuation ring with field of fractions $K$ of characteristic zero, uniformizer $\pi$ and residue field $k=\OO/\pi \OO$ of characteristic $p>0$. For $\chi$ and $\psi$ ordinary characters of a group $G$ we write $(\chi,\psi)_G$ for the scalar product of these characters.

\begin{theorem}\label{thm:filtration}
   Assume that $\OO[\zeta_p]$ is a maximal order in $K(\zeta_p)$ and set $m=(K(\zeta_p) : K)$. Let $C$ be a cyclic group of order $p$.
   Given a $KC$-module $V$ with character $\chi$ and a $kC$-module $N$, there is an $\OO C$-lattice $L\subseteq V$ with $L/\pi L\cong N$ if and only if $N$ has a filtration
   \begin{equation}
       0=N_0\leq N_1 \leq \ldots \leq N_{e+1} =  N\quad\textrm{with $e=\frac{p-1}{m}$ }
   \end{equation}
   such that 
   \begin{equation}\label{eqn filtration}
       N_{i+1}/N_{i} \cong I_{m}^{(\eta_{i}, \chi)_C} \textrm{ for $1 \leq i \leq e$ and } N_{1}/N_0 \cong I_1^{(\mathbf{1}, \chi)_C}.
   \end{equation}
   Here $\mathbf{1}$ denotes the trivial character of $C$ and $\eta_1,\ldots,\eta_e\in \Irr_{\bar K}(C)\setminus \{1\}$ representatives for the $\Gal(\bar K /K)$-orbits of irreducible characters of $C$.
\end{theorem}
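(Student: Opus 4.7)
My plan is to exploit the decomposition $V = V_0 \oplus V_1 \oplus \cdots \oplus V_e$ of $V$ into its $KC$-isotypic components, with $V_0$ the sum of trivial constituents and $V_i$ (for $i\geq 1$) the sum corresponding to the Galois orbit of $\eta_i$. Set $a_0 = (\mathbf{1},\chi)_C$ and $a_i = (\eta_i,\chi)_C$ for $i\geq 1$. The maximality hypothesis on $\OO[\zeta_p]$ ensures that $\Gamma = \OO\times\prod_{i=1}^e\OO[\eta_i]$ is a maximal $\OO$-order in $KC$ containing $\OO C$, with each $\OO[\eta_i]$ a DVR totally ramified of degree $m$ over $\OO$. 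In particular, $\OO[\eta_i]/\pi\OO[\eta_i]\cong I_m$ as a $kC$-module, since $\pi$ is associate to $(\zeta_p-1)^m$ in $\OO[\eta_i]$ and $g-1$ acts as a uniformizer.

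For the forward direction, given $L\subseteq V$ with $L/\pi L\cong N$, I would define $L^{(i)} := L\cap(V_0\oplus\cdots\oplus V_{i-1})$ for $i=0,1,\ldots,e+1$. Each $L^{(i)}$ is a pure $\OO C$-sublattice of $L$ (since $\pi y\in L^{(i)}$ with $y\in L$ forces $y\in V_0\oplus\cdots\oplus V_{i-1}$), and $L^{(i+1)}/L^{(i)}$ embeds into $V_i$ via projection as a full $\OO[\eta_i]$-lattice, using that $L\cap V'$ is always full in any sub-$K$-space $V'\subseteq V$. Since $\OO[\eta_i]$ is a PID, this quotient is free of rank $a_i$. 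Setting $N_i$ to be the image of $L^{(i)}$ in $N$ and reducing the exact sequences $0\to L^{(i)}\to L^{(i+1)}\to \OO[\eta_i]^{a_i}\to 0$ of $\OO$-free modules modulo $\pi$ then produces the asserted filtration, with $N_1\cong I_1^{a_0}$ and $N_{i+1}/N_i\cong I_m^{a_i}$ for $i\geq 1$.

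For the backward direction, I would construct $L$ by induction. I would start with $L^{(1)} := \OO^{a_0}\subseteq V_0$, which reduces to $N_1=I_1^{a_0}$. Given $L^{(i)}$ with $L^{(i)}/\pi L^{(i)}\cong N_i$, I seek an $\OO C$-lattice extension $0\to L^{(i)}\to L^{(i+1)}\to \OO[\eta_i]^{a_i}\to 0$ whose mod-$\pi$ reduction realises the prescribed $kC$-extension $0\to N_i\to N_{i+1}\to I_m^{a_i}\to 0$. Lattice extensions are classified by $\Ext^1_{\OO C}(\OO[\eta_i]^{a_i},L^{(i)})$, so the task reduces to showing that the natural reduction map $\Ext^1_{\OO C}(\OO[\eta_i]^{a_i},L^{(i)})\to \Ext^1_{kC}(I_m^{a_i},N_i)$ is surjective.

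The main obstacle is the surjectivity of this reduction map. Using the projective cover $\OO C\twoheadrightarrow\OO[\eta_i]$ (valid since $\OO C$ is local) with kernel $\Omega:=\Omega_{\OO C}(\OO[\eta_i])$, the $\OO$-flatness of $\OO[\eta_i]$ yields $\Omega/\pi\Omega\cong\Omega_{kC}(I_m)$; combined with the surjections $\Hom(\Omega,-)\twoheadrightarrow\Ext^1$ recalled in the excerpt, it suffices to show the reduction $\Hom_{\OO C}(\Omega,L^{(i)})\to\Hom_{kC}(\Omega/\pi\Omega,N_i)$ is surjective. The long exact Ext-sequence attached to $0\to L^{(i)}\xrightarrow{\pi}L^{(i)}\to N_i\to 0$ identifies the obstruction as the $\pi$-torsion of $\Ext^1_{\OO C}(\Omega,L^{(i)})=\Ext^2_{\OO C}(\OO[\eta_i],L^{(i)})$. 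I propose to show this vanishes by a direct computation: with $f_i$ the minimal polynomial of $\eta_i(g)$ over $K$ and $q_i(x)=(x^p-1)/f_i(x)$, the projective resolution $\cdots\to\OO C\xrightarrow{\cdot q_i(g)}\OO C\xrightarrow{\cdot f_i(g)}\OO C\to\OO[\eta_i]\to 0$ of $\OO[\eta_i]$ is $2$-periodic (because $\Omega^2\OO[\eta_i]\cong \OO C/f_i(g)\OO C\cong \OO[\eta_i]$), and applying $\Hom(-,L^{(i)})$ produces a complex in which $\cdot f_i(g)$ acts on $L^{(i)}$ as $\pi$ times a unit on each component $V_j$ (the valuation of $f_i(\zeta_j)$ in $\OO[\eta_j]$ is exactly $m$), hence is injective. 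This forces $\Ext^2_{\OO C}(\OO[\eta_i],L^{(i)})=\ker(\cdot f_i(g))/\operatorname{im}(\cdot q_i(g))=0$ and the surjectivity follows.
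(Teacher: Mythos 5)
Your argument is correct, and while the overall skeleton coincides with the paper's (the ``only if'' direction via intersecting $L$ with partial sums of isotypic components is essentially identical, as is the inductive construction of $L^{(i+1)}$ as a lattice extension of $\OO[\eta_i]^{a_i}$ by $L^{(i)}$), you handle the crucial surjectivity of $\Ext^1_{\OO C}(\OO[\eta_i]^{a_i},L^{(i)})\to \Ext^1_{kC}(I_m^{a_i},N_i)$ by a genuinely different route. The paper identifies $\Omega_{\OO C}(\OO C\eps)$ as a free rank-one lattice over the complementary order $\OO C(1-\eps)$ and deduces surjectivity on $\Hom(\Omega(-),=)$ from projectivity, whereas you identify the obstruction with the $\pi$-torsion of $\Ext^1_{\OO C}(\Omega,L^{(i)})\cong\Ext^2_{\OO C}(\OO[\eta_i],L^{(i)})$ and kill the whole $\Ext^2$ via the $2$-periodic resolution $\cdots\to\OO C\xrightarrow{q_i(g)}\OO C\xrightarrow{f_i(g)}\OO C\to\OO[\eta_i]\to 0$ together with the injectivity of $f_i(g)$ on $L^{(i)}$ (which holds simply because $f_i(\zeta')\neq 0$ for every eigenvalue $\zeta'$ of $g$ on $V_0\oplus\cdots\oplus V_{i-1}$; the finer claim that $f_i(\zeta')$ has valuation exactly $m$ is true under the maximality hypothesis but is not needed). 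The two arguments exploit the same structural fact --- your $\Omega\cong\OO C/q_i(g)\OO C$ is precisely the paper's $\OO C(1-\eps)$ --- but yours is more computational and arguably cleaner, at the cost of being specific to $\OO C$ rather than the slightly more general local orders $\Lambda$ the paper sets up. One point you pass over quickly: to conclude surjectivity on $\Ext^1$ from surjectivity on $\Hom(\Omega(-),=)$ you need the square relating $\Hom_{\OO C}(\Omega,L^{(i)})\twoheadrightarrow\Ext^1_{\OO C}$ and $\Hom_{kC}(\Omega/\pi\Omega,N_i)\twoheadrightarrow\Ext^1_{kC}$ to commute with reduction modulo $\pi$ (using that $\OO C\to\OO[\eta_i]$ reduces to the projective cover $kC\to I_m$, so that $\Omega/\pi\Omega\cong\Omega_{kC}(I_m)$); this is exactly the compatibility the paper chooses to verify by explicitly lifting the pushout construction rather than asserting it, and you should add a sentence to that effect, but it is a routine check rather than a gap.
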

\begin{proof}
   Consider the ``only if''-direction first. Let $\eps_2,\ldots, \eps_{e+1}$ denote the central primitive idempotents of $KC$ indexed in such a way that $\eta_i(\eps_{i+1})\neq 0$ for $1\leq i \leq e$, and let $\eps_{1}$ denote the central primitive idempotent belonging to the trivial character. Suppose we are given a lattice $L\leq V$. Then we can  define $N_i$ to be the image in $N=L/\pi L$ of $L_i=L\cap(L(\eps_1+\ldots+\eps_{i}))$ for $1\leq i \leq e+1$. We set $N_0=0$. Then $N_{i}/N_{i-1}$ is the reduction modulo $\pi$ of $L_i/L_{i-1}$, which can be viewed as a lattice in $V\eps_i$. Since $\OO C \eps_i$ is either isomorphic to $\OO$ or to $\OO[\zeta_p]$, both of which are maximal orders by assumption, it follows that any lattice in $V\eps_i$ is free. Furthermore, a free $\OO C \eps_i$-module reduces to a direct sum of copies of $I_m$ if $i >1$ and $I_1$ if $i=1$ (since a free $\OO C\eps_i$-module of rank 1 reduces to a uniserial $kC$-module, and comparing dimensions tells us that this uniserial module must be either $I_m$ or $I_1$). Comparing dimensions then yields the multiplicities given in~\eqref{eqn filtration}.
               
   We prove the ``if''-direction by induction. Let $\Lambda$ be a local $\OO$-order in a semisimple $K$-algebra $A$, and let $\eps$ be a central primitive idempotent in $Z(A)$ such that $\Lambda \eps$ is a maximal order in $A \eps$. Then any $\Lambda\eps$-lattice is free. Assume moreover that $\Lambda/\pi \Lambda$ is isomorphic to $k[x]/(x^\ell)$ for some $\ell \geq 1$. Note that $\Lambda=\OO C$ satisfies these assumptions, as does $\Lambda=\OO C e$ for any idempotent $e\in Z(KC)$. Hence, when trying to find a lift for $N_i$ in $V(\eps_1+\ldots +\eps_i)$ we can set $\Lambda=\OO C (\eps_1+\ldots+\eps_i)$ and $\eps=\eps_i$ and assume $\Lambda/\pi\Lambda \cong k[x]/(x^\ell)$. We will try to find such a lift as an extension of a lift of $N_{i-1}$ in $V(\eps_1+\ldots +\eps_{i-1})$ and a lift of $N_i/N_{i-1}$ in $V\eps_i$. Below we will outline the procedure that accomplishes this.
   
   We first want to show that the Heller translate $\Omega_\Lambda(\Lambda\eps)$ is a free $\Lambda(1-\eps)$-lattice of rank one. By definition of $\Omega_\Lambda$ there is a short exact sequence
   \begin{equation}
       0\longrightarrow \Omega_\Lambda(\Lambda\eps) \longrightarrow \Lambda \longrightarrow \Lambda \eps \longrightarrow 0.
   \end{equation}
   Since $\Lambda$ is local, so is $\Lambda\eps$. So the $\Lambda$-module $\Lambda\eps/\pi\Lambda \eps$ must have simple top, and is therefore isomorphic to $k[x]/(x^j)$ for some $j\leq \ell$ (which is determined by the dimension of $A\eps$). After reducing the exact sequence modulo $\pi$ it follows that $\Omega_\Lambda(\Lambda\eps)/ \pi \Omega_\Lambda(\Lambda\eps)\cong k[x]/(x^{\ell-j})$. In particular, $\Omega_\Lambda(\Lambda\eps)$ has simple top and is therefore generated by a single element as a $\Lambda$-lattice.

   Tensoring the short exact sequence above by $K$ tells us that $K\otimes_\OO \Omega_\Lambda(\Lambda\eps) \cong A(1-\eps)$, since the other two terms become $A$ and $A\eps$, respectively. To summarize, we have shown that $\Omega_\Lambda(\Lambda\eps)$ is a $\Lambda$-lattice in $A(1-\eps)$ generated by a single element. Hence $\Omega_\Lambda(\Lambda\eps) \cong \Lambda (1-\eps)$. In particular, $\Omega_\Lambda(\Lambda\eps)$ can be viewed as a projective $\Lambda(1-\eps)$-lattice.

   Now assume that $N$ is a $\Lambda/\pi\Lambda$-module which has a submodule $N'$ such that $N'$ is the reduction of a $\Lambda$-lattice $L'$ in $V(1-\eps)$ and such that $N/N'$ is the reduction of a lattice $L''$ in $V\eps$. We can think of $N$ as an extension of $N/N'$ by $N'$, which corresponds to an element of $\Ext^1_{\Lambda/\pi\Lambda} (N/N', N')$. To get a lattice $L$ with the required properties, we will show that the natural map 
   \begin{equation}\label{eqn ext surjection}
        \Ext^1_\Lambda(L'', L') \longrightarrow \Ext^1_{\Lambda/\pi\Lambda} (N/N', N')
   \end{equation} 
   is surjective. To prove this, first note that there is a surjection $\Hom_\Lambda(\Omega_\Lambda L'', L') \twoheadrightarrow \Ext^1_\Lambda(L'', L')$, and $\Omega_\Lambda L''$ is a projective $\Lambda(1-\eps)$-lattice. It follows that 
   \begin{equation}
       \Hom_\Lambda(\Omega_\Lambda L'', L') = \Hom_{\Lambda(1-\eps)}(\Omega_\Lambda L'', L') \twoheadrightarrow  \Hom_{\Lambda/\pi\Lambda}(\Omega_\Lambda L''/\pi \Omega_\Lambda L'', L'/\pi L'), 
   \end{equation}
   and $\Omega_\Lambda L''/\pi \Omega_\Lambda L'' = \Omega_{\Lambda/\pi\Lambda} (L''/\pi L'')$. The non-trivial assertion here is the surjectivity of the map above, which follows from the projectivity of $\Omega_\Lambda L''$ over $\Lambda (1-\eps)$. So every element of $ \Hom_{\Lambda/\pi\Lambda}( \Omega_{\Lambda/\pi\Lambda} (L''/\pi L''), L'/\pi L')$ is the reduction modulo $\pi$ of an element of $\Hom_\Lambda(\Omega_\Lambda L'', L')$. In principle, after passing from $\Hom(\Omega (-),=)$ to $\Ext^1(-,=)$ as before we should get surjectivity of~\eqref{eqn ext surjection} above. However, we choose a more direct approach. Namely, we will explicitly construct a lift of an extension given by $\bar \alpha \in \Hom_{\Lambda/\pi\Lambda}( \Omega_{\Lambda/\pi\Lambda} (L''/\pi L''), L'/\pi L')$. By what we saw before, we can lift $\bar \alpha$ to a homomorphism $\alpha$ of lattices, giving rise to an $\alpha \in  \Hom_\Lambda(\Omega_\Lambda L'', L')$. Following \cite[Section 2.6]{Benson} the corresponding extensions are constructed as follows. We choose a projective $\Lambda$-lattice $Q$ with a map $\beta: Q \longrightarrow L'$ such that $\alpha\oplus \beta: \Omega_\Lambda L''\oplus Q \longrightarrow L'$ is surjective, so $L'\cong (\Omega_\Lambda(L'') \oplus Q) / \Ker (\alpha\oplus \beta)$. Then the extension given by $\alpha$ is 
   \begin{equation}
      0\longrightarrow (\Omega_\Lambda(L'') \oplus Q) / \Ker (\alpha\oplus \beta) \longrightarrow (P \oplus Q) / \Ker (\alpha\oplus \beta)  \stackrel{p\oplus 0}{\longrightarrow} L''\longrightarrow 0,
   \end{equation}
   where $p: \ P \longrightarrow L''$ is a projective cover (whose kernel is $ \Omega_\Lambda(L'') $, by definition). The extension corresponding to $\bar \alpha$, constructed in the same way, has middle term $(P/\pi P \oplus Q/\pi Q) / \Ker (\bar \alpha\oplus \bar \beta) $, which is clearly a quotient of $(P \oplus Q) / \Ker (\alpha\oplus \beta) $. Comparing dimensions yields that it is indeed the reduction modulo $\pi$ of $(P \oplus Q) / \Ker (\alpha\oplus \beta) $. That is, we have indeed constructed a lift of the extension corresponding to $\bar\alpha$.
\end{proof}

\begin{remark}\label{rem:MaxOrders}
   The condition that $\OO[\zeta_p]$ needs to be a maximal order will always be satisfied in the situations we are interested in. Specifically, if $\OO'$ is an unramified extension of $\Z_p$ and $\chi$ is an absolutely irreducible character in a block of defect 1, then $\OO=\OO'[\chi]$ is a maximal order, and so is $\OO[\zeta_p]$. To see this first notice that a non-maximal order will remain non-maximal after extensions of scalars. So we may pick $\OO'$ to be the completion of the maximal unramified extension of $\Z_p$, and $K'$ its field of fractions. If $B$ is a block of defect 1 defined over $\OO'$, then the Wedderburn decomposition of $K'B$ does not involve any matrix algebras over non-commutative skew-fields (since a skew-field over a finite extension of $\Q_p$ splits over $K'$ due to \cite[Corollary (31.10)]{Reiner}). By \cite[Theorem II]{RoggenkampDefectOne} the order $\varepsilon B$ is hereditary where $\varepsilon$ is the central primitive idempotent in $Z(K'B)$ corresponding to $\chi$. Hence $Z(\varepsilon B)=\OO'[\chi]$ is a maximal order. We also know that $\OO'[\chi] \subseteq \OO'[\zeta_p]$, since $\chi$ vanishes on all elements whose $p$-part is not conjugate to an element of the defect group (and all $p'$-roots of unity are already contained in $\OO'$). Hence $\OO[\zeta_p]=\OO'[\zeta_p]$ is a maximal order too. 
\end{remark}

In our application to units in principal blocks of defect 1 we will only use the following corollary of the theorem above where Remark~\ref{rem:MaxOrders} shows that the theorem is indeed directly applicable in this situation. 

\begin{corollary}\label{cor:FlorianTamelyRamfied}
   Let $p$ be an odd prime and $u \in \V(\mathbb{Z}G)$ a unit of order $pr$ such that $p$ does not divide $r$. Let $\mathcal{O}'$ a complete discrete valuation ring with residue field $k$ of characteristic $p>0$ containing a primitive $r$-th root of unity $\xi$ and such that $p$ is unramified in $\mathcal{O}'$. Let $K'$ denote the field of fractions of $\OO'$. Let $\mathcal{O}$ be a complete discrete valuation ring such that $\mathcal{O}' \subseteq \mathcal{O} \subseteq \mathcal{O}'[\zeta]$ for a primitive $p$-th root of unity $\zeta$ and set $e = (K:K')$. Denote by $\zeta_1,...,\zeta_e$ representatives of the orbits of ${\rm Gal}(K(\zeta)/K)$ on $\langle \zeta \rangle \setminus \{1\}$.
   
    Let $L$ be an $\mathcal{O}\langle u \rangle$-lattice with character $\theta$ and $M'$ the $k\langle u \rangle$-module $L/\pi L$, where $\pi$ is a uniformizer for $\OO$. Denote by $M$ the maximal direct summand of $M'$ on which the $p'$-part of $u$ acts as $\xi^j$ for some fixed integer $j$. Then $M$ has a filtration
   \[0 = M_0 \leq M_1 \leq \dots \leq M_{e+1} = M\]
   with $M_{1}/M_0 \cong I_1^{\mu(\xi^j,u,\theta)}$ and $M_{i+1}/M_{i} \cong I_m^{\mu(\xi^j\cdot \zeta_i, u, \theta)}$ for $1 \leq i \leq e$.
\end{corollary}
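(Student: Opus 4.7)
The plan is to apply Theorem~\ref{thm:filtration} directly to the cyclic group $C=\langle u_p\rangle$, where $u=u_p u_{p'}$ is the decomposition of $u$ into its $p$-part (of order $p$) and its $p'$-part (of order $r$). Since $\OO$ contains the $r$-th root of unity $\xi$, Proposition~\ref{prop:CyclicGroupModulesp'Decomposition} gives a decomposition $L=\bigoplus_i L_i$ where $L_i$ is the maximal $\OO$-sublattice on which $u_{p'}$ acts as $\xi^i$. Setting $L':=L_j$, the element $u_{p'}$ acts on $L'$ as the scalar $\xi^j$, so $L'$ is naturally an $\OO C$-lattice, and $V':=K\otimes_\OO L'$ is a $KC$-module with some character $\chi$. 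The decomposition commutes with reduction modulo $\pi$, so $M=L'/\pi L'$.

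Next I would verify that the hypotheses of Theorem~\ref{thm:filtration} apply. Since $\OO'\subseteq \OO \subseteq \OO'[\zeta]$, one has $\OO[\zeta]=\OO'[\zeta]$, which is the maximal order of $K(\zeta)=K'(\zeta)$ because $p$ is unramified in $\OO'$. As $(K'(\zeta):K')=p-1$, the degree $(K(\zeta):K)=(p-1)/e=m$ matches the theorem's notation. Applying the theorem to $L'\subseteq V'$ yields a filtration $0=N_0\leq N_1\leq\ldots\leq N_{e+1}=M$ with $N_1/N_0\cong I_1^{(\mathbf{1},\chi)_C}$ and $N_{i+1}/N_i\cong I_m^{(\eta_i,\chi)_C}$ for $1\leq i\leq e$, where $\eta_1,\ldots,\eta_e$ represent the $\Gal(\bar K/K)$-orbits on the non-trivial irreducible $\bar K$-characters of $C$.

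It remains to identify these multiplicities with those appearing in the corollary. Choosing the labelling so that $\eta_i(u_p)=\zeta_i$, the multiplicity $(\eta_i,\chi)_C$ equals the multiplicity of $\zeta_i$ as an eigenvalue of $u_p$ on $V'$. Because $V'$ is precisely the $\xi^j$-eigenspace of $u_{p'}$ on $V=K\otimes_\OO L$ and $u=u_p u_{p'}$, this coincides with the multiplicity of $\xi^j\zeta_i$ as an eigenvalue of $u$ on $V$, namely $\mu(\xi^j\zeta_i,u,\theta)$. An analogous computation for the trivial character gives $(\mathbf{1},\chi)_C=\mu(\xi^j,u,\theta)$, and substituting these values yields the desired filtration. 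The main conceptual step is this translation between the intrinsic inner-product description coming from Theorem~\ref{thm:filtration} and the multiplicities of eigenvalues of $u$ on $L$; everything else is a routine reduction via the $p/p'$-decomposition.
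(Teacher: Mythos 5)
Your argument is correct and is exactly the derivation the paper intends: the corollary is stated there without a written-out proof, as a direct consequence of Theorem~\ref{thm:filtration} (with Remark~\ref{rem:MaxOrders} supplying the maximal-order hypothesis), and your proposal fills in precisely that routine reduction — the $p/p'$-eigenspace decomposition via Proposition~\ref{prop:CyclicGroupModulesp'Decomposition}, the verification that $\OO[\zeta]=\OO'[\zeta]$ is maximal, and the translation of the inner products $(\eta_i,\chi)_C$ into the eigenvalue multiplicities $\mu(\xi^j\zeta_i,u,\theta)$. No gaps.
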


In the unramified situation the $\mathcal{O}C_p$-lattices are well-known, see e.g. \cite[Theorem 2.6]{HellerReiner}, but the fact we will need is a direct consequence of Theorem~\ref{thm:filtration}.

\begin{corollary}\label{cor:UnramifiedCase}
Let $p$ be an odd prime, $u \in \V(\mathbb{Z}G)$ a unit of order $pr$ for $p \nmid r$ and $\theta$ a $p$-rational complex irreducible character of $G$. 
Then there exists a complete discrete valuation ring $\mathcal{O}$ with residue field $k$ of characteristic $p$, with $p$ unramified in $\mathcal{O}$, which contains a primitive $r$-th root of unity $\xi$, so that there is an $\mathcal{O}G$-lattice $L$ with character $\theta$. Moreover, denoting by $M$ the $kG$-module obtained from $L$ and by $\zeta$ a primitive $p$-th root of unity for every integer $j$ we get 
\[\gamma_{2,j}(M) = \gamma_{p-1,j}(M) = \mu(\xi^j\zeta,u,\theta)\]
\end{corollary}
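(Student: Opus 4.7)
The plan is to derive Corollary~\ref{cor:UnramifiedCase} from Corollary~\ref{cor:FlorianTamelyRamfied} in the degenerate case $e=1$, and then classify the resulting extensions of $kC_p$-modules.

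First I would choose $\mathcal{O}$ to be a sufficiently large complete unramified extension of $\mathbb{Z}_p$ containing $\xi$ so that $\theta$ is realized by some $\mathcal{O}G$-lattice $L$. Since $\theta$ is $p$-rational, its character values lie in a cyclotomic field $\mathbb{Q}(\zeta_n)$ with $p\nmid n$ and therefore embed into an unramified extension of $\mathbb{Z}_p$; moreover the Schur index of $\theta$ over $\mathbb{Q}_p$ is an element of the local Brauer group $\mathbb{Q}/\mathbb{Z}$, which is killed by passing to a sufficiently large finite unramified extension. Taking $\mathcal{O}'=\mathcal{O}$ in Corollary~\ref{cor:FlorianTamelyRamfied}, one has $e=(K:K)=1$ and $m=p-1$, because $\mathbb{Q}_p(\zeta)/\mathbb{Q}_p$ is totally ramified of degree $p-1$ and so $K$ cannot contain $\zeta$.

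With these choices, Corollary~\ref{cor:FlorianTamelyRamfied} yields, for every integer $j$, a short exact sequence of $k\langle u\rangle$-modules
\[
0 \longrightarrow I_1^{a} \longrightarrow M^{(j)} \longrightarrow I_{p-1}^{b} \longrightarrow 0,
\]
where $M^{(j)}$ is the maximal direct summand of $M=L/\pi L$ on which the $p'$-part of $u$ acts as $\xi^j$, $a=\mu(\xi^j,u,\theta)$ and $b=\mu(\xi^j\zeta,u,\theta)$ (the unique orbit representative $\zeta_1$ of Corollary~\ref{cor:FlorianTamelyRamfied} can be taken to be $\zeta$, since $\operatorname{Gal}(K(\zeta)/K)=(\mathbb{Z}/p)^{\times}$ acts transitively on $\langle\zeta\rangle\setminus\{1\}$).

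Finally I would classify such extensions of $kC_p$-modules. The projective cover of $I_{p-1}$ over $kC_p$ is $I_p=kC_p$ with kernel the socle $I_1$, so $\Omega_{kC_p}(I_{p-1})\cong I_1$ and a short computation gives $\Ext^1_{kC_p}(I_{p-1},I_1)\cong k$. Hence the extension class lives in $\Mat_{b\times a}(k)$, and under the action of $\Aut(I_1^a)\times\Aut(I_{p-1}^b)$---which factors through $\GL_a(k)\times\GL_b(k)$, since $\End(I_{p-1})\cong k[x]/(x^{p-1})$ acts on the one-dimensional $\Ext$ through its residue field---the orbits are parameterized by the rank $r$ of the matrix. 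The extension of rank $r$ is isomorphic to $M^{(j)}\cong I_p^{r}\oplus I_{p-1}^{b-r}\oplus I_1^{a-r}$. Consequently every indecomposable summand of $M^{(j)}$ has dimension in $\{1,p-1,p\}$, and
\[
\gamma_{2,j}(M)\;=\;\gamma_{p-1,j}(M)\;=\;r+(b-r)\;=\;b\;=\;\mu(\xi^j\zeta,u,\theta).
\]

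The main subtlety is the initial choice of $\mathcal{O}$, which requires absorbing the Schur index of $\theta$ into a suitable finite unramified extension; the combinatorial half is then essentially a one-step calculation. A shortcut avoiding the $\Ext$-computation is to invoke the Heller--Reiner classification \cite[Theorem 2.6]{HellerReiner} of $\mathcal{O}C_p$-lattices over unramified $\mathcal{O}$: the three indecomposables $\mathcal{O}$, $\mathcal{O}[\zeta]$, $\mathcal{O}C_p$ reduce to $I_1$, $I_{p-1}$, $I_p$ respectively, and the stated formula can then be read off directly from the eigenvalues of $u$ on $K\otimes_{\mathcal{O}}L$.
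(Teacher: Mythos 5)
Your proof is correct and follows essentially the same route as the paper: reduce to Corollary~\ref{cor:FlorianTamelyRamfied} in the degenerate case $e=1$, $m=p-1$, and then deduce the $\gamma$-equalities from the resulting filtration. The paper's proof is only two sentences; it cites Fong's theorem to pick $\mathcal{O}=\Z_p[\eta]$ with $\eta$ a primitive $|G|_{p'}$-th root of unity (the same theorem that is implicitly used throughout the section) rather than your Schur-index argument, and it then simply states that ``the claim follows'' --- which, given the context, is an implicit appeal to Lemma~\ref{lem:CombOnM} with $e=1$: that lemma gives $\gamma_{p-1}(M^{(j)})\geq \mu(\xi^j\zeta,u,\theta)\geq\gamma_2(M^{(j)})$, and combined with the trivial $\gamma_2\geq\gamma_{p-1}$ this forces equality. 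Your explicit classification of extensions in $\Ext^1_{kC_p}(I_{p-1}^b,I_1^a)$ by the rank of the class is a perfectly valid and rather more self-contained way to fill this gap, and it yields the stronger conclusion that every indecomposable summand of $M^{(j)}$ has dimension $1$, $p-1$ or $p$. Your remark at the end that one could alternatively quote Heller--Reiner directly matches the remark the paper makes just before stating the corollary. One very small inaccuracy: you write ``$e=(K:K)=1$'', but $e$ in Corollary~\ref{cor:FlorianTamelyRamfied} is $(K:K')$; the point is simply that you set $\mathcal{O}=\mathcal{O}'$, so $e=1$ --- your intent is clear but the notation is slightly off.
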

\begin{proof}
By a theorem of Fong, cf. \cite[Remark 6]{BachleMargolis4primaryII}, we can take $\mathcal{O} = \mathbb{Z}_p[\eta]$ for $\eta$ a primitive $s$-th root of unity and $s$ the maximal divisor of $|G|$ not divisible by $p$. In the situation of Corollary~\ref{cor:FlorianTamelyRamfied} we then have $e=1$ and $m=p-1$, so that the claim follows.
\end{proof}

\subsection{Reversing the lattice method}
We will use Corollary~\ref{cor:FlorianTamelyRamfied} to disprove the existence of non-trivial units of order $p$ in principal blocks of defect 1, using a method sometimes termed the ``lattice method''.
However, Theorem~\ref{thm:filtration} is an ``if and only if''-statement, and it is in fact possible to decide (one way or the other) whether a unit with given eigenvalues on the irreducible modules exists. That is, we can use this method to construct units in blocks of defect 1 defined over $\Z_p$, which could potentially lead to further counterexamples to the Zassenhaus conjecture, or perhaps a negative answer to the question whether the orders of torsion units in $\V(\mathbb{Z}G)$ are the same as the orders of elements of $G$. A particular case of this problem is the so-called Prime Graph Question which asks this for products of two primes. As yet, we have not found any examples where there are no other obstructions to the existence of a corresponding global unit.

\begin{remark}\label{rem:Roggenkamp}
   Let $B$ be a block of defect $1$ defined over $\OO$, and let $A$ denote its $K$-span. We will use the following facts:
   \begin{enumerate}
       \item There is a hereditary order $\Gamma\subset A$ containing $B$ such that $J(\Gamma)=J(B)$. See \cite[Theorem II(iv)]{RoggenkampDefectOne}.
       \item $\Gamma$ is the hereditary hull of $B$, and, in particular, $B\eps=\Gamma\eps$ for all central-primitive idempotents $\eps \in Z(A)$.
       This follows from \cite[Theorem II(iii)]{RoggenkampDefectOne}, if we take into account that a hereditary order is actually determined by its radical (so, using the notation of \cite{RoggenkampDefectOne}, $J(\Gamma) = J(B)$ implies $J(\Gamma\eps)=J(\Gamma)\eps= J(B)\eps=J(B\eps)$, which implies $\Gamma\eps=B\eps$). 
       \item Let $S$ be a simple $B$-module and $P$ its projective cover. Then $P$ is a pullback of
       $$
           L_1 \twoheadrightarrow S \twoheadleftarrow L_2,
       $$
       where $L_1$ and $L_2$ are irreducible $B $-lattices with non-isomorphic $K$-spans. In particular, $L_1$ and $L_2$ are projective $\Gamma$-lattices, and $\Gamma\cdot P \cong L_1\oplus L_2$ (since $\Gamma\cdot P$ clearly has epimorphisms onto both $L_1$ and $L_2$). This follows from \cite[(v) and (vi)]{RoggenkampDefectOne}.

       \item For any simple $\Lambda$-module $S$ there are exactly two isomorphism classes of simple $\Gamma$-modules whose restriction to $\Lambda$ is isomorphic to $S$. This is a consequence of the previous point.
   \end{enumerate}
\end{remark}

\begin{lemma}\label{lemma:conjIntoHereditary}
   Let $A=M_n(K)$  and let $\Gamma\subset A$ be a hereditary order. Let $V$ denote the (unique up to isomorphism) simple $A$-module, and let $L_0=\pi L_r\subset L_1\subset \ldots \subset L_r$ be a chain of $\Gamma$-lattices in $V$ such that each $L_i$ is maximal in $L_{i+1}$. 
   
   Assume that $U$ is a finite group and $\varphi: U \longrightarrow \U(A)$ is a group homomorphism turning $V$ into a $KU$-module. Assume moreover that there is an $\OO U$-lattice $M\subset V$ such that $M/\pi M$
   has a filtration $0=N_0 \subset N_1 \subset \ldots \subset N_r=M/\pi M$, where $\dim_k(N_{i+1}/N_i) = \dim_k(L_{i+1}/L_i)$ for each $i$. Then there exists a unit $a\in A$ such that $a\varphi(U) a^{-1}\subseteq \Gamma$ and $L_ia=N_i$ for all $0\leq i\leq r$.
\end{lemma}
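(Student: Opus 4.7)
The plan is to construct $a$ as a $K$-linear automorphism of $V$ that identifies the filtration $L_\bullet$ of $L_r$ with a lift of $N_\bullet$ inside $M$. Concretely, first lift: let $M_i\subseteq M$ be the preimage of $N_i$ under the reduction $M\twoheadrightarrow M/\pi M$, so that $\pi M = M_0 \subset M_1 \subset \cdots \subset M_r = M$ is a chain of full-rank $\OO$-sublattices of $V$. I read the filtration $N_\bullet$ as one by $\varphi(U)$-submodules (this is needed for the conclusion to be attainable at all, and is automatic in applications such as Corollary~\ref{cor:FlorianTamelyRamfied}), so that each $M_i$ is $\varphi(U)$-stable.

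Second, using the matching-dimensions hypothesis $\dim_k(L_{i+1}/L_i) = \dim_k(N_{i+1}/N_i)$, I build an $\OO$-module isomorphism $\phi\colon L_r\to M$ with $\phi(L_i) = M_i$ for all $i$. To do so, pick an $\OO$-basis $v_1,\ldots,v_n$ of $L_r$ whose image in $L_r/\pi L_r$ is adapted to the flag $\{L_i/L_0\}_i$ (possible by lifting a flag-adapted basis of $L_r/\pi L_r$ via Nakayama), and likewise an $\OO$-basis $w_1,\ldots,w_n$ of $M$ adapted to $\{N_i\}_i$; then $\phi(v_j)=w_j$ does the job, since the two flags have matching jump dimensions. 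Viewing $V$ as the right regular $A$-module, $\phi$ extends to a $K$-linear automorphism of $V$ which is right multiplication by some $a\in A^\times$, and by construction $L_i\cdot a = M_i$ for each $i$. In particular $L_r\cdot a = M$ and $L_0\cdot a = \pi M$, yielding $L_i a = N_i$ after the reduction modulo $\pi M$.

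Finally, to verify $a\varphi(U)a^{-1}\subseteq \Gamma$, I invoke the structure theory of hereditary orders recalled in Section~2.3: the order $\Gamma\subseteq M_n(K)$ coincides with the set of elements of $A$ stabilizing every lattice in the maximal chain $L_\bullet$, i.e.\ $\Gamma = \bigcap_{i=0}^{r} \operatorname{End}_{\OO}(L_i)$. For $u\in U$,
\[
    L_i\cdot(a\varphi(u)a^{-1}) = (L_i\cdot a)\varphi(u)\cdot a^{-1} = (M_i\cdot \varphi(u))\cdot a^{-1} \subseteq M_i\cdot a^{-1} = L_i,
\]
using the $\varphi(U)$-stability of $M_i$ together with $L_i\cdot a = M_i$. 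Hence $a\varphi(u)a^{-1}$ stabilizes every $L_i$ and therefore lies in $\Gamma$. The only non-formal ingredient is the identification of $\Gamma$ with the stabilizer of $L_\bullet$, which is a direct consequence of the block-matrix presentation of hereditary orders recalled earlier (a maximal chain $L_0=\pi L_r\subset\cdots\subset L_r$ exhausts one full period of $\Gamma$-lattices in $V$); the rest is linear-algebraic bookkeeping.
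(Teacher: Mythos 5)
Your proof is correct and follows essentially the same route as the paper: align the flag $\{L_i\}$ with a lift of $\{N_i\}$ by a unit $a\in\U(A)$ (you do this via adapted bases, the paper via transitivity of $\GL_n(k)$ on flags plus surjectivity of $\GL_n(\OO)\to\GL_n(k)$, which amounts to the same thing), then conclude from $\Gamma=\bigcap_i\End_\OO(L_i)$, valid because the $L_i$ exhaust the isomorphism classes of irreducible $\Gamma$-lattices and $\Gamma$ is hereditary. Your explicit flagging of the implicit hypothesis that the $N_i$ are $kU$-submodules is also consistent with the paper, whose proof likewise relies on it.
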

\begin{proof}
   We can find an $a\in \U(A)=\GL_n(K)$ such that $L_r=Ma$. After replacing $\varphi$ by a conjugate, we can therefore assume that $L_r=M=\OO^n$. In particular, we get two filtrations of $M/\pi M$, namely the one given by the $L_i+\pi M/\pi M$ and the one given by the $N_i$.
   Since $\GL_n(k)$ acts transitively on flags, and $\GL_n(\OO)$ surjects onto $\GL_n(k)$, we can find a $y\in \GL_n(\OO)\subseteq \U(A)$ such that $L_iy+\pi M/\pi M=N_i$ for all $0\leq i \leq r$. Again, after conjugating $\varphi$ appropriately we can assume that 
   $L_i+\pi M/\pi M=N_i$ for all $0\leq i \leq r$, and therefore each $L_i$ is stable under the action of $\varphi(U)$. Note that the $\Gamma$-lattices in $K^n$ form a chain, and therefore the $L_i$ are actually representatives for all isomorphism classes of irreducible $
   \Gamma$-lattices. It follows that 
   \begin{equation}
       \varphi(U) \subseteq \bigcap_{i=1}^r \End_\OO(L_i) =\Gamma,
   \end{equation}
   since $\Gamma$ is hereditary. This show that  $\varphi$ (which was obtained from the original $\varphi$ by conjugation) takes values in $\Gamma$.
\end{proof}

\begin{lemma}\label{lemma:fromhereditarytoblock}
   Let $B$ be a block of defect 1, and assume that the semisimple algebra $A=KB$ does not involve any matrix algebras over non-commutative skew-fields. Let $\Gamma\subseteq A$ be a hereditary $\OO$-order containing $B$ such that $J(B)=J(\Gamma)$. 
   Also assume that $U$ is a finite group and  $\varphi: U \longrightarrow \U(\Gamma)$ is a group homomorphism. We will view any $\Gamma$-module $M$ as an $\OO U$-module which we will denote $\Res_{\OO U} M$.
   
   Then there is an $a\in \U(A)$ such that $a\varphi(U) a^{-1}\subseteq B$ if and only if $\Res_{\OO U} S \cong \Res_{\OO U} T$ for any two simple $\Gamma$-modules $S$ and $T$ with $\Res_B S \cong \Res_B T$.
\end{lemma}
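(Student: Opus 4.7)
The plan is to handle the two directions separately, working throughout modulo $J(\Gamma)$ and using that $J(\Gamma) = J(B) \subseteq B$.

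For the \emph{only if} direction, suppose $a \in \U(A)$ satisfies $\psi := a\varphi a^{-1}: U \to \U(B)$. For any $\Gamma$-module $M$, the map $m \mapsto a^{-1} m$ is a $kU$-isomorphism between $M$ viewed as an $\OO U$-module via $\psi$ and $\Res_{\OO U} M$ (via $\varphi$), since $a^{-1}\psi(u) = \varphi(u)a^{-1}$. As $\psi$ factors through $B$, the $\psi$-twisted structure on $M$ depends only on $\Res_B M$, so $\Res_B S \cong \Res_B T$ yields $\Res_{\OO U} S \cong \Res_{\OO U} T$.

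For the \emph{if} direction, set $\bar \Gamma := \Gamma/J(\Gamma)$ and $\bar B := B/J(\Gamma)$. It suffices to produce $a \in \U(\Gamma)$ whose image $\bar a$ satisfies $\bar a \bar\varphi(u) \bar a^{-1} \in \bar B$ for all $u$, since then $a\varphi(u)a^{-1} \in B + J(\Gamma) = B$. Under the hypotheses on $A$, $\bar\Gamma$ is split semisimple over $k$, decomposing as $\prod_S \End_k(S)$ indexed by simple $\Gamma$-modules, and similarly $\bar B = \prod_{\bar T} \End_k(\bar T)$ indexed by simple $B$-modules. Remark~\ref{rem:Roggenkamp}(4) tells us that $\bar B \hookrightarrow \bar \Gamma$ embeds $\End_k(\bar T)$ diagonally, via fixed $\bar B$-isomorphisms $\sigma_i: S_i \to \bar T$, into the two factors $\End_k(S_1) \times \End_k(S_2)$ corresponding to the pair $\{S_1, S_2\}$ of simple $\Gamma$-modules restricting to $\bar T$.

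Writing $\bar\varphi_S(u)$ for the $S$-component of $\bar\varphi(u)$ and $g_i := \sigma_i \bar a_{S_i}: S_i \to \bar T$, the condition ``$\bar a \bar\varphi(u) \bar a^{-1} \in \bar B$'' restricted to the pair $\{S_1, S_2\}$ reduces to the requirement that $g_2^{-1} g_1 : S_1 \to S_2$ intertwine $\bar\varphi_{S_1}$ with $\bar\varphi_{S_2}$, i.e., be a $kU$-isomorphism between $\Res_{\OO U} S_1$ and $\Res_{\OO U} S_2$. Such an isomorphism $\alpha$ exists by hypothesis, so fixing $\bar a_{S_1}$ freely and setting $\bar a_{S_2}$ so that $g_2^{-1} g_1 = \alpha$ resolves the pair. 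Because each simple $\Gamma$-module lies in exactly one such pair, these choices are independent across pairs and assemble into a single $\bar a \in \bar\Gamma^\times$. Since $\U(\Gamma) \twoheadrightarrow \U(\bar\Gamma)$ is surjective (quasi-regularity of $J(\Gamma)$), $\bar a$ lifts to $a \in \U(\Gamma) \subseteq \U(A)$ as required.

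The main conceptual point is translating Remark~\ref{rem:Roggenkamp}(4) into the explicit diagonal embedding of the simple factors of $\bar B$ into those of $\bar\Gamma$. Once this dictionary is fixed, the $kU$-isomorphism hypothesis exactly solves the resulting compatibility condition on each pair, and no further obstruction arises.
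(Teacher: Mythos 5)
Your proof of the \emph{if} direction is essentially the paper's argument: reduce modulo $J(\Gamma)=J(B)$, observe via Remark~\ref{rem:Roggenkamp}(4) that $B/J(B)\hookrightarrow \Gamma/J(\Gamma)$ is a product of ``diagonal'' embeddings $\End_k(\bar T)\hookrightarrow \End_k(S_1)\times\End_k(S_2)$ over the pairs of simple $\Gamma$-modules lying over a common simple $B$-module, use the hypothesised $kU$-isomorphisms as intertwiners to build $\bar a$, lift to $\U(\Gamma)$ and conclude from $B+J(\Gamma)=B$. That is the same route and the computation with $g_2^{-1}g_1$ is correct. One small inaccuracy: the hypothesis on $A$ does not make $\bar\Gamma$ split over $k$ (the endomorphism rings $k_T=\End_\Gamma(T)$ may be proper field extensions of $k$, as in the paper's $\prod_T M_{n_T}(k_T)$); the argument survives, but you should phrase the decomposition with these $k_T$'s rather than $\End_k(S)$, and note (as the paper does implicitly via Skolem--Noether) why the two projections of the embedding differ only by inner automorphisms.

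The \emph{only if} direction has a genuine gap. A simple $\Gamma$-module $M$ is killed by $\pi$, so an element $a\in\U(A)$ that does not lie in $\U(\Gamma)$ does not act on $M$, and ``$m\mapsto a^{-1}m$'' is undefined; your argument only works when $a\in\U(\Gamma)$. This is not merely cosmetic: conjugation by units of $A$ outside $\Gamma$ can genuinely change the isomorphism class of $\Res_{\OO U}S$. For instance, with $\Gamma=\left(\begin{smallmatrix}\OO & \pi\OO\\ \OO & \OO\end{smallmatrix}\right)\subseteq M_2(K)$, $\varphi(u)=\mathrm{diag}(\omega,1)$ and $a=\left(\begin{smallmatrix}0&\pi\\1&0\end{smallmatrix}\right)$, the element $a$ normalises $\Gamma$ but swaps the restrictions of the two simple $\Gamma$-modules. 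So the reduction ``$\Res^{\psi}S\cong\Res^{\psi}T$ implies $\Res^{\varphi}S\cong\Res^{\varphi}T$'' needs an actual argument tracking how conjugation by $\U(A)$ permutes the chain of irreducible $\Gamma$-lattices in each Wedderburn component. For what it is worth, the paper's own proof establishes only the \emph{if} direction (which is also the only one used later), so your treatment of that direction is the part that matters; but as written your justification of the converse does not stand.
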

\begin{proof}
   Note that $\Gamma$ exist by Remark~\ref{rem:Roggenkamp}. The equality $J(B)=J(\Gamma)$ implies that we have an embedding of $k$-algebras $B/J(B)\hookrightarrow \Gamma/J(\Gamma)$. 
   An element $x\in \Gamma$ lies in $B$ if and only if $B x = B$, which happens if and only if $B x/J(B) = B/J(B)$. Now $B/J(B)$ is semisimple, and a simple direct summand $S$ of $B /J(B)$ satisfies $S\cdot \Gamma = T_1\oplus T_2$ for two non-isomorphic simple $\Gamma$-modules $T_1$ and $T_2$ by Remark~\ref{rem:Roggenkamp}. Note that $\Res_B T_1 \cong \Res_B T_2\cong S$.

   We also know that $\eps B = \eps \Gamma$ for every central-primitive idempotent $\eps \in Z(A)$. Being a hereditary order, $\Gamma$ is actually the direct product of the various $\eps \Gamma$, which means in particular that each simple $\Gamma$-module is actually a simple $\eps\Gamma$-module for some $\eps$. Since $B$ maps surjectively onto each $\eps\Gamma$ it follows that simple $\Gamma$-modules remain simple upon restriction to $B$. In particular $\End_{B}(\Res_\Lambda T) = \End_\Gamma(T)$ for any simple $\Gamma$-module $T$ (``$\supseteq$'' is clear, and if the containment was proper, the double centraliser theorem would imply that the image of $B$ in $\End_k(T)$ was strictly contained in that of $\Gamma$).
   
   Let us now consider the embedding of $k$-algebras $B/J(B)\hookrightarrow \Gamma/J(\Gamma)$. Both of these are semisimple $k$-algebras. We have
   \begin{equation}\label{eqn sdkjdoipjf}
       B /J(B) \cong \prod_{S} M_{n_S}(k_S) \hookrightarrow  \Gamma/J(\Gamma) \cong \prod_{T} M_{n_T}(k_T),
   \end{equation}
   where $S$ and $T$ run over representatives of the isomorphism classes of simple $B$- and $\Gamma$-modules, respectively, $k_S$ and $k_T$ denote $\End_B(S)^{\opp}$ and $\End_\Gamma(T)^{\opp}$, respectively, and $n_S$ and $n_T$ are the dimensions of these simple modules over $k_S$ and $k_T$. We will identify $B/J(B)$ and $\Gamma/J(\Gamma)$ with the corresponding direct product of matrix rings (this involves a choice of isomorphisms).
   
   The considerations above imply that whenever $\Res_B (T)\cong S$ for a simple $B$-module $S$ and simple $\Gamma$-module $T$, then $k_S=k_T$ and $n_S=n_T$. Furthermore, there are exactly two simple $\Gamma$-modules (up to isomorphism) whose restriction to $B$ is isomorphic to $S$. The central-primitive idempotent in $B/J(B)$ belonging to $S$ is therefore mapped to the sum of the central-primitive idempotents belonging to these two simple $\Gamma$-modules. It follows that the map from above restricts to a map
   \begin{equation}\label{eqn dkjoidjod}
       M_{n_S}(k_S) \hookrightarrow M_{n_S}(k_S) \times M_{n_S}(k_S).
   \end{equation}
   Composing this map with the two projections, and using the fact that the only algebra endomorphisms of $M_{n_S}(k_S)$ are inner automorphisms, it follows that this map is given by $M \mapsto (AMA^{-1}, \tilde{A}M\tilde{A}^{-1})$ for certain matrices $A,\tilde{A}\in \GL_{n_S}(k_S)$. We get one such matrix for each component of $\Gamma/J(\Gamma)$. Hence we get an element of $\bar a \in \U(\Gamma/J(\Gamma))$, such that, after replacing $B$ by $aB a^{-1}$ (where $a$ is an arbitrary preimage of $\bar a$ in $\U(\Gamma)$), the embedding \eqref{eqn dkjoidjod} is given by $M \mapsto (M,M)$. So, in order to check that an element $x\in \Gamma$ lies in $B$, all we have to check is that its image in $\Gamma/J(\Gamma)$ has identical entries in the components labelled by simples whose restriction becomes isomorphic upon restriction to $B$. 

   Let us consider $\bar \varphi:\ U \longrightarrow \Gamma /J(\Gamma): \ u \mapsto \varphi(u) + J(\Gamma)$. This is the representation attached to  $\Res_{\OO U} \left( \bigoplus_{T} T\right)$, where $T$ runs over all isomorphism classes of simple $\Gamma$-modules. For a simple $\Gamma$-module $T$ define $\bar \varphi_T$ to be the composition of $\bar \varphi$ with the projection $\Gamma/J(\Gamma)\twoheadrightarrow M_{n_T}(k_T)$. By the assumption, whenever the restriction of two non-isomorphisc simple $\Gamma$-modules $T$ and $\tilde{T}$ to $B$ is isomorphic, then so is their restriction to $\OO U$. In particular, we can find elements $A_T \in \GL_{n_T}(k_T)$ and $A_{\tilde{T}} \in \GL_{n_{\tilde{T}}}(k_{\tilde{T}})$ such that $A_T\bar \varphi_T(u)A_T^{-1}= A_{\tilde{T}}\bar \varphi_{\tilde{T}}(u)A_{\tilde{T}}^{-1}$ for all $u\in U$. We now pick a preimage of $\bar a=(A_T)_T \in \prod_{T} \GL_{n_T}(k_T)$, which we identified with $\U(\Gamma/J(\Gamma))$, in $\U(\Gamma)$. Denote this preimage by $a$. If we replace $\varphi$ by $a\varphi a^{-1}$, we get that $\bar \varphi_T$ and $\bar \varphi_{\tilde{T}}$ are equal whenever $T$ and $\tilde{T}$ have isomorphic restrictions to $B$. By the discussions in the previous paragraph it follows that, after replacing $\varphi$ by this conjugate, the image of $\bar\varphi$ is contained in $B/J(B)$,

   To finish, since $\bar \varphi$ has image contained in $B/J(B)$, the image of $\varphi$ is contained in $B + J(\Gamma)=B+J(B)=B$, which is what we needed to show.
\end{proof}

The two lemmas above allow us to decide when an embedding $U \longrightarrow \Q_p Gb$ is conjugate to an embedding $U \longrightarrow \Z_pGb$, assuming that $\Z_pGb$ is a block of defect 1, and $\Q_pGb$ does not involve any matrix algebras over non-commutative skew-fields. If we take $U=\langle u\rangle $ to be a finite cyclic group, then this provides a converse to the ``lattice method''.  Note that in the theorem below, $\Irr(B)$ and $\IBr(B)$ denote absolutely irreducible ordinary and modular characters. Over a splitting system, $B$ may split up into Galois conjugate blocks, and the Brauer ``tree'' referenced in the third point of the following theorem is actually the union of the Brauer trees of these split blocks. 

\begin{theorem}\label{thm:reversaloflatticemethod}
   Let $B$ be a block of defect 1 defined over $\OO$ such that the Wedderburn-Artin decomposition of $K B$ does not involve any matrix algebras over non-commutative skew-fields.  Let $\xi_1,\ldots,\xi_{r}$ denote the $r$-th roots of unity in $\bar K$, where $r\in \N$ is coprime to $p$. Let $\bar \xi_1,\ldots,\bar\xi_r$ denote the images of these roots of unity in $\bar k$.
   Since $\Gal(\bar K/K)$ and $\Gal(\bar k/k)$ act on these roots of unity, we get corresponding actions on the indices \{1,\ldots,r\}. 
   Let $\zeta$ be a primitive $p$-th root of unity in $\bar K$. 

   Let $u\in K B$ be a unit of order $pr$, and let $C$ denote a cyclic group of order $p$. Then $u$ is conjugate to a unit in $B$ if and only if, for each $1\leq j \leq r$, we can attach a $\bar kC$-module $S_{\psi, j}$ to each Brauer character $\psi \in \IBr(B)$ and a $\bar kC$-module $M_{\chi, j}$ to each ordinary character $\chi\in \Irr(B)$ such that the following hold:
   \begin{enumerate}
      \item For each $\chi\in \Irr(B)$ and $1\leq j\leq r$ we have 
      \[
         M_{\chi,j} \cong M_{\sigma(\chi),\sigma(j)} \quad \textrm{ for all $\sigma\in \Gal(\bar K/K)$.}
      \]
      \item For each $\psi\in\IBr(B)$  and $1\leq j\leq r$ we have 
      \[S_{\psi,j} \cong S_{\sigma(\psi),\sigma(j)} \quad \textrm{ for all $\sigma\in\Gal(\bar k/k)$.}\]
       \item For each $\chi\in \Irr(B)$ and $1\leq j \leq r$ there is a filtration
       \[0=N_0\leq N_1\leq \ldots \leq N_{a(\chi)}=M_{\chi,j},\] where $a(\chi)$ equals the number of edges in the Brauer tree adjacent to $\chi$, and $N_i/N_{i-1}\cong S_{\psi_i,j}$ for $i=1,\ldots, a(\chi)$, where  $\psi_1,\ldots, \psi_{a(\chi)}\in \IBr(B)$ label the edges adjacent to $\chi$ in counter-clockwise order. 
       \item For each $\chi\in \Irr(B)$ and $1\leq j \leq r$ set $m=(K(\chi,\zeta) : K(\chi))$, $e=\frac{p-1}{m}$ and let $\zeta_1,\ldots,\zeta_e$ be a system of representatives for the $\Gal(\bar K/K(\chi))$-orbits of primitive $p$-th roots of unity in $\bar K$. Then there is a filtration
       \[0=L_0\leq L_1\leq \ldots \leq L_{e+1}=M_{\chi,j},\] where 
       $L_{i+1}/L_{i} \cong I_{m}^{\mu(\xi_j\zeta_i, u, \chi)}$   for all 
       $1\leq i\leq e$ and $L_1/L_0 \cong I_{1}^{\mu(\xi_j, u, \chi)}$. 
   \end{enumerate}
\end{theorem}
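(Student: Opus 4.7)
The proof combines Theorem~\ref{thm:filtration} with Lemmas~\ref{lemma:conjIntoHereditary} and~\ref{lemma:fromhereditarytoblock}, with Galois descent providing the bridge between $\bar K$ (where the module theory is transparent) and $K$ (where the unit $u$ lives). For the ``only if'' direction, assume $u$ already lies in $B$ after conjugation. For every $\chi\in\Irr(B)$ pick an $\bar\OO\langle u\rangle$-lattice $L_\chi$ inside the corresponding simple $\bar KB$-module $V_\chi$; define $M_{\chi,j}$ as the $\bar\xi_j$-weight component (under the $p'$-part of $u$) of its mod-$\pi$ reduction, and define $S_{\psi,j}$ analogously from the simple $\bar kB$-module labelled by $\psi$. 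Condition~(4) is then Corollary~\ref{cor:FlorianTamelyRamfied} (applied weight by weight, with Remark~\ref{rem:MaxOrders} ensuring the maximality hypothesis of Theorem~\ref{thm:filtration}); condition~(3) follows because an irreducible $\Gamma_\chi$-lattice reduces to a uniserial module whose composition factors cycle through the edges at $\chi$ in a cyclic order coming from the Brauer tree's planar embedding; and (1)-(2) are inherent Galois naturality of these constructions.

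For the ``if'' direction, suppose the data is given. Working over a splitting system, decompose $V_\chi=\bigoplus_j V_{\chi,j}$ into $u_{p'}$-eigenspaces. Condition~(4) together with Theorem~\ref{thm:filtration} yields, for each $j$, an $\bar\OO\langle u_p\rangle$-lattice inside $V_{\chi,j}$ whose mod-$\pi$ reduction is $M_{\chi,j}$; their direct sum is an $\bar\OO\langle u\rangle$-lattice $L_\chi\subset V_\chi$. Next invoke Remark~\ref{rem:Roggenkamp} to obtain a hereditary order $\Gamma=\prod_\chi \Gamma_\chi$ containing $B$ with $J(\Gamma)=J(B)$. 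The irreducible $\Gamma_\chi$-lattices form a chain $L_0\subset L_1\subset\ldots\subset L_{a(\chi)}$ whose successive quotients, restricted to $B$, are the simple modules $S_{\psi_1},\ldots,S_{\psi_{a(\chi)}}$ attached to the edges adjacent to $\chi$ in cyclic order. Reassembling condition~(3) over all $j$ exhibits a filtration of $L_\chi/\pi L_\chi$ with successive quotients $\bigoplus_j S_{\psi_i,j}=S_{\psi_i}$ of precisely the right dimensions to match this chain. Lemma~\ref{lemma:conjIntoHereditary}, applied to the $\chi$-component, then conjugates the image of $\langle u\rangle$ into $\Gamma_\chi$; taking the product over $\chi$ produces $\varphi:\langle u\rangle\to\U(\Gamma)$.

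To descend from $\Gamma$ into $B$ we apply Lemma~\ref{lemma:fromhereditarytoblock}: we must show that the two non-isomorphic simple $\Gamma$-modules restricting to a common simple $B$-module $S_\psi$ are isomorphic as $\bar\OO\langle u\rangle$-modules under $\varphi$. These two simples correspond to the endpoints $\chi_1,\chi_2$ of the edge $\psi$, and after Step~2 each of their restrictions has weight-$j$ component equal to the prescribed $S_{\psi,j}$; crucially, $S_{\psi,j}$ is the same $\bar k C$-module whether read off from $\chi_1$ or from $\chi_2$. Thus the restrictions agree, and Lemma~\ref{lemma:fromhereditarytoblock} provides a further conjugation landing $\langle u\rangle$ in $B$. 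Finally, conditions~(1)-(2) guarantee that every step of the construction is $\Gal(\bar K/K)$- and $\Gal(\bar k/k)$-equivariant, so the conjugating elements can be chosen compatibly with the Galois action and the resulting unit lies in $\OO B$, not merely in $\bar\OO B$. The main obstacle, and the place where care is required, is Step~2: one must verify that the cyclic order ``counter-clockwise'' prescribed in condition~(3) matches the cyclic order of the $\Gamma_\chi$-lattice chain forced by the conventions of~\cite{RoggenkampDefectOne}, and that the simple $\Gamma$-modules at the two endpoints of an edge, as they sit in $\Gamma/J(\Gamma)$, really do restrict to the same simple $B$-module after the explicit conjugation used in the proof of Lemma~\ref{lemma:fromhereditarytoblock}.
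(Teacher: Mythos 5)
Your overall architecture matches the paper's: Theorem~\ref{thm:filtration} supplies the filtration data, Lemma~\ref{lemma:conjIntoHereditary} conjugates into the hereditary order, and Lemma~\ref{lemma:fromhereditarytoblock} descends from the hereditary order to the block. However, there is a genuine gap in your ``if'' direction, and a smaller inaccuracy in the ``only if'' direction.

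The central problem is that you carry out the entire construction over $\bar\OO$ and $\bar K$ and then assert, in the final sentence, that conditions~(1)--(2) ``guarantee that every step of the construction is Galois-equivariant, so the conjugating elements can be chosen compatibly with the Galois action and the resulting unit lies in $\OO B$, not merely in $\bar\OO B$.'' This is exactly the step that cannot be waved away. Knowing that $u$ is conjugate in $\U(\bar K B)$ to an element of $\bar\OO B$ does not, on its own, give conjugacy in $\U(KB)$ to an element of $\OO B$; one would need a genuine Galois descent argument (in effect, the vanishing of a nonabelian $H^1$), and Lemmas~\ref{lemma:conjIntoHereditary} and~\ref{lemma:fromhereditarytoblock} are stated and proved over a fixed complete discrete valuation ring, not over its algebraic closure. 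The paper sidesteps this entirely: it never passes to $\bar\OO$. Instead, for each $\chi$ it works inside $\varepsilon_\chi\OO B$ viewed as an $\OO[\chi]$-order, chooses a $k'C$-module $M'_{\chi,j}$ (with $k'=\OO[\chi]/J(\OO[\chi])$) whose base change to $\bar k$ is $M_{\chi,j}$, lifts over $\OO[\chi]$ via Theorem~\ref{thm:filtration}, and then induces up to $\OO[\chi,\xi_j]$ and sums over Galois orbit representatives of $j$. Conditions~(1)--(2) are what make this $\OO$-level construction well-defined and make the resulting $S_\psi$ independent of the choice of $\chi$ at the two endpoints of the edge; they are not used to perform descent \emph{after} a construction over $\bar\OO$. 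You need to redo the lifting step so that it lands in $\OO B$ directly.

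A secondary point: in the ``only if'' direction you say conditions~(1)--(2) ``are inherent Galois naturality of these constructions.'' The paper explicitly notes that the Galois invariance in condition~(1) is \emph{not} automatic from the naive construction (the lattice $L_{\sigma(\chi)}$ chosen inside $V_{\sigma(\chi)}$ need not correspond to the chosen $L_\chi$), and deals with it by defining $M_{\chi,j}$ only on $\Gal(\bar K/K)$-orbit representatives and extending by fiat. Your assertion hides a real issue, even if a ``more natural'' construction could make it literally true. Finally, your worry about matching the counter-clockwise order of condition~(3) to the chain of $\Gamma_\chi$-lattices is legitimate but is resolved in the paper by Lemma~\ref{lem:ModSwap}, which lets one permute subquotients of filtrations of $kC_p$-modules freely; it is not a genuine obstacle.
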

\begin{proof}
   Let us first prove the ``only if'' direction. 
   If $u$ is conjugate to a unit in $B$, then we may replace $u$ by its conjugate in $B$ and assume without loss of generality that $u\in B$. 
   We can write $u=u_pu_r$, where $u_p,u_r\in B$ are units of order $p$ and order $r$, respectively. We identify $C=\langle u_p\rangle$.
   For each $\psi\in \IBr(B)$ and $1\leq j \leq r$ we can consider the corresponding simple $\bar k\otimes_\OO B$-module $S_{\psi}$,  and take the direct summand of $S_{\psi}$ on which $u_r$ acts as $\bar \xi_j$. This is a $\bar k\langle u_p\rangle = \bar kC$-module, and we may take this module as $S_{\psi,j}$. 

   To define the $M_{\chi,j}$ let us consider an unramified extension $\mathcal E$ of $\mathcal O$ such that $\mathcal E/\pi\mathcal E \cong \bar k$. Let $E$ denote the field of fractions of $\mathcal E$. Denote by $\pi'$ a uniformizer for $\mathcal E[\chi]$. Note that $\mathcal E[\chi]/\pi'\mathcal E[\chi] \cong \mathcal E /\pi\mathcal E \cong \bar k$. There is a simple $E(\chi)\otimes_\OO B$-module $V_\chi$ whose character is $\chi$. Pick an $\mathcal E[\chi]\otimes_{\OO} B$-lattice $L_\chi$ in $V_\chi$ and define $L_{\chi,j}$ as the direct summand of 
   $L_\chi$ on which $u_r$ acts like $\xi_j$. Then set $M_{\chi,j} = L_{\chi,j}/\pi' L_{\chi,j}$. We may view $L_{\chi,j}$ as an $\mathcal E[\chi]\langle u_{p}\rangle=\mathcal E[\chi] C$-lattice.
   Using the fact that orbits of $\Gal(\bar K /E(\chi))$ on $p$-th roots of unity are the same  as the orbits of $\Gal(\bar K/K(\chi))$ we get the required filtration for the fourth point directly from Theorem~\ref{thm:filtration}.
   Moreover, $M_{\chi,j}$ has a filtration by the $S_{\psi,j}$ as required since $M_\chi=L_\chi/\pi' L_\chi$ viewed as a $\bar k\otimes_\OO B$-module has a corresponding filtration by the $S_\psi$ by the definition of the Brauer tree (which may possibly start at a different Brauer character than $\psi_1$, but we can use Lemma~\ref{lem:ModSwap} to rearrange the filtration of $M_{\chi,j}$ as needed). 
   
  The Galois invariance of the $S_{\psi,j}$ from the second point follows directly from the definition of the $S_{\psi,j}$. The Galois invariance of the $M_{\chi,j}$ from the first point is not immediate from the construction above (and may in fact fail), but we can simply define $M_{\chi,j}$ for representatives of the $\Gal(\bar K/K)$-orbits on $\Irr(B)\times\{1,\ldots,r\}$ using the construction above, and then define $M_{\sigma(\chi), \sigma(j)}=M_{\chi, j}$ whenever $(\chi,j)$ is a representative (a more natural construction would be to choose the lattice $L_{\sigma(\chi)}$ in $V_{\sigma(\chi)}$ corresponding to $L_\chi$ in $V_\chi$, but that requires additional set-up).

   For the ``if''-direction, assume that we are given $M_{\chi,j}$ and $S_{\psi,j}$ as in the statement. 
   For each $\chi$ consider $\varepsilon_\chi \OO B$, where $\varepsilon_\chi \in Z(KB)$ is the central primitive idempotent corresponding to the $\Gal(\bar K/K)$-conjugacy class of $\chi$. 
   The center $Z(\varepsilon_\chi \OO B)= \OO[\chi]$ is a maximal order by
   Remark~\ref{rem:MaxOrders} combined with our assumption on the Wedderburn decomposition, and we may therefore view $\varepsilon_\chi \OO B$ as an $\OO[\chi]$-order.
   Set $k'\cong \OO[\chi]/J(\OO[\chi])$ and write $\pi'$ for a generator of $J(\OO[\chi])$. It follows from  \cite[Theorem II(i)]{RoggenkampDefectOne} that $k'$ is independent of $\chi$, as it is isomorphic to the endomorphism ring of a simple $B$-module (all of which have isomorphic endomorphism rings).
   
   We will first consider the unit $\varepsilon_\chi u \in \varepsilon_\chi K B$. The latter is a full matrix algebra over $K(\chi)$ due to the assumption that no matrix algebras over skew-fields occur in the Wedderburn decomposition. We denote the unique simple $\varepsilon_\chi K B$-module by $V_\chi$.
   
   Now let us pick a $k'C$-module $M'_{\chi,j}$ which becomes isomorphic to $M_{\chi,j}$ upon extension of scalars from $k'$ to $\bar k$. The module $M'_{\chi,j}$ still has a filtration as in the fourth point, since the existence of such a filtration is determined combinatorially in terms of tableaux, and is therefore independent of the base field. Now we can use Theorem~\ref{thm:filtration} to get an $\OO[\chi]\langle u_p\rangle$-lattice $L_{\chi,j}$ with $L_{\chi,j}/\pi' L_{\chi,j} \cong M'_{\chi,j}$ such that the $\zeta_i$-eigenspace of $u_p$ acting on $\bar K \otimes_{\OO[\chi]} L_{\chi,j}$ has dimension $\mu(\xi_j\zeta_i, u, \chi)$ for all $1\leq i\leq e$. We turn that into the  $\OO[\chi]\langle u\rangle=\OO[\chi](\langle u_p\rangle\times \langle u_r\rangle)$-lattice $L_{\chi,j} \otimes_{\OO[\chi]} \OO[\chi,\xi_j]$, where we let $u_r$ act on $\OO[\chi,\xi_j]$ like $\xi_j$. Then set
   \[
      L_{\chi} = \bigoplus_{j} L_{\chi,j} \otimes_{\OO[\chi]} \OO[\chi,\xi_j] 
   \]
   where $j$ ranges over representatives for the $\Gal(\bar K/K(\chi))$-orbits on $\{1,\ldots,r\}$. By definition, the multiplicity of the eigenvalue $\xi_j\zeta_i$ on $\bar K\otimes_{\OO[\chi]}L_{\chi}$ is $\mu(\xi_j\zeta_i, u, \chi)$, which means that $K(\chi)\otimes_{\OO[\chi]} L_\chi \cong V_\chi$ (regarded as a $K(\chi)\langle u\rangle$-module).

   By the third point, $L_\chi/\pi' L_\chi$ has a filtration with subquotients isomorphic to 
   \[
      S_\psi = \bigoplus_{j} S_{\psi,j} \otimes_{k'} k'[\bar{\xi}_j]
   \]
   where $j$ ranges over representatives for the $\Gal(\bar k/k')$-orbits on $\{1,\ldots,r\}$, which are equal to the $\Gal(\bar K/K(\chi))$-orbits on $\{1,\ldots,r\}$, since the $p'$-roots of unity in $\bar K$ map bijectively onto those in $\bar k$, and this bijection maps the $p'$-roots of unity in $K(\chi)$ onto those in $k'$. This means that this definition of $S_\psi$ is in fact independent of $\chi$. It now follows that $\dim_{k'}S_{\psi}=\psi(1)$, since the existence of the above filtration implies the equations $\dim_{k'}S_{\psi_1}+\ldots+\dim_{k'}S_{\psi_{a(\chi)}}=\rank_{\OO[\chi]} L_\chi = \chi(1)$ for all $\chi\in\Irr(B)$ (using the notation of the third point), and $\dim_{k'}S_{\psi}=\psi(1)$ is the unique solution to that. Of course $\psi(1)$ is also the $k'$-dimension of a simple $B$-module corresponding to $\psi$.
   
   Let us again fix a $\chi\in \Irr(B)$. Using Lemma~\ref{lemma:conjIntoHereditary} with $U=\langle \eps_\chi u \rangle$ we now get a conjugate of $\varepsilon_\chi u$ inside of $\varepsilon_\chi B$ such that the restriction of the simple $\varepsilon_\chi B$-module corresponding to $\psi_i$ (using the notation from the third point) is isomorphic to $S_{\psi_i}$. Since we can do this for all $\chi$, we get a unit $u'$ conjugate to $u$ in 
   \[
      \Gamma = \bigoplus_{\chi} \varepsilon_\chi B,
   \]
   where $\chi$ ranges over representatives for the $\Gal(\bar K/K)$-orbits on $\Irr(B)$. Lemma~\ref{lemma:fromhereditarytoblock} shows that $u'$ is in fact conjugate to a unit in $B$, as required.
\end{proof}

A typical envisaged application of Theorem~\ref{thm:reversaloflatticemethod} is given in Example~\ref{ex:PSL216}. In particular, the reader might notice that the conditions (1) and (2) are necessary to ensure the existence of a unit conjugate to $u$ in $B$, but they are not necessary to check in practice for a unit which is known to exist in $\mathbb{Q}G$, since we simply define the $M_{\chi,j}$ and $S_{\psi,j}$ on representatives of Galois orbits.


\section{Combinatorics}

In this section we will prove the combinatorial facts we will need for our proofs later. We start by recalling some results from \cite{CaicedoMargolis}. Even when not stated explicitly all skew tableaux appearing are assumed to be semi-standard and to satisfy the lattice property.

\begin{lemma}\label{lem:ModSwap}\cite[Lemma 2.8]{CaicedoMargolis}
Let $k$ be a field of characteristic $p$, $C$ a cyclic group of order $p$ and let $M$, $Q_1,\ldots ,Q_n$ be $kC$-modules. If $\sigma$ is a permutation of $\{1,\ldots,n \}$ then there exists a filtration
\[
   0=M_0\leq M_1\leq \ldots\leq M_n=M \quad \textrm{ with $M_i/M_{i-1} \cong Q_i$ for all $1\leq i \leq n$}
\]
if and only if there exists a filtration
\[
   0=N_0\leq N_1\leq \ldots\leq N_n=M \quad \textrm{ with $N_i/N_{i-1} \cong Q_{\sigma(i)}$ for all $1\leq i \leq n$}.
\]
\end{lemma}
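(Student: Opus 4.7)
The plan is to reduce the statement to the case of a single adjacent transposition, and then translate the problem into a combinatorial question about skew tableaux that is already handled by Lemma~\ref{lem:SymmetryOfLRCoef}.

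First, since the symmetric group on $\{1,\ldots,n\}$ is generated by adjacent transpositions, it suffices to prove the lemma in the case where $\sigma$ swaps two consecutive indices $i$ and $i+1$, leaving the rest fixed. In that situation one keeps $M_0\leq\ldots \leq M_{i-1}$ and $M_{i+1}\leq \ldots \leq M_n$ unchanged and only has to find a new intermediate module $M_i'$ with $M_{i-1}\leq M_i'\leq M_{i+1}$, $M_i'/M_{i-1}\cong Q_{i+1}$ and $M_{i+1}/M_i'\cong Q_i$.

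Second, setting $W=M_{i+1}/M_{i-1}$, $U=Q_i$ and $V=Q_{i+1}$, the problem reduces to the following statement about $kC$-modules: if $W$ has a submodule isomorphic to $U$ with quotient isomorphic to $V$, then $W$ has a submodule isomorphic to $V$ with quotient isomorphic to $U$. Indeed, given such a submodule of $W$, its preimage under $M_{i+1}\twoheadrightarrow W$ supplies the desired $M_i'$.

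Third, let $\lambda$, $\mu$, $\nu$ be the partitions associated to $W$, $U$, $V$ respectively (recalling that $kC$-modules are direct sums of uniserials, so the partition just records the dimensions of the indecomposable summands). By Theorem~\ref{th:ModulesAndTableaux}, the existence of a submodule of $W$ isomorphic to $U$ with quotient isomorphic to $V$ is equivalent to the existence of a semi-standard skew tableau of shape $\lambda/\mu$ and content $\nu$ satisfying the lattice property. By Lemma~\ref{lem:SymmetryOfLRCoef}, such a tableau exists if and only if there exists a semi-standard skew tableau of shape $\lambda/\nu$ and content $\mu$ satisfying the lattice property. Applying Theorem~\ref{th:ModulesAndTableaux} once more, this is exactly the statement that $W$ has a submodule isomorphic to $V$ with quotient isomorphic to $U$, which is what we needed.

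The genuinely non-trivial ingredient is the symmetry in Lemma~\ref{lem:SymmetryOfLRCoef} (equivalent to the symmetry of the Littlewood--Richardson coefficients $c^\lambda_{\mu\nu}=c^\lambda_{\nu\mu}$), but that input is supplied to us; once Theorem~\ref{th:ModulesAndTableaux} is in hand, the proof is essentially a bookkeeping argument. The only thing to be careful about is that the reduction to adjacent transpositions is legitimate, which is immediate because the composition of two valid ``swaps'' on a filtration is again a valid ``swap'' on the resulting filtration, so iterating yields any permutation.
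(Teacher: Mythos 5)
Your proof is correct. Note that the paper itself gives no proof of this lemma---it is quoted verbatim from \cite[Lemma 2.8]{CaicedoMargolis}---and your argument (reduce to adjacent transpositions, pass to the subquotient $M_{i+1}/M_{i-1}$, and apply Theorem~\ref{th:ModulesAndTableaux} together with the Littlewood--Richardson symmetry of Lemma~\ref{lem:SymmetryOfLRCoef}) is precisely the standard proof used in that reference.
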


\begin{lemma}\label{lem:full_rectangle}\cite[Lemma 3.4]{CaicedoMargolis}
 Let $T$ be a semistandard skew tableau satisfying the lattice property. If $T$ contains a full rectangle of boxes of height $h$ and width $n$, then $\gamma_n(T)\ge h$.
\end{lemma}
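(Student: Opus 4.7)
The plan is to focus on the bottom row of the rectangle and combine the column-strict condition with the lattice property to force many occurrences of letter $h$. Say the rectangle lives in rows $r, r+1,\ldots, r+h-1$ and columns $c, c+1, \ldots, c+n-1$, and write $e_1 \le e_2 \le \ldots \le e_n$ for the entries in its bottom row $r+h-1$. Strict column increase applied to column $c$, together with positivity of entries, immediately gives $e_1\ge h$. Let $u_1 < \ldots < u_s$ denote the distinct values occurring among the $e_i$ with multiplicities $m_1, \ldots, m_s$, so $u_1 \ge h$ and $\sum_i m_i = n$.

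Next I would consider the reading word of $T$ and let $A_\ell$ be the number of $\ell$'s read strictly before the rectangle's bottom row begins; concretely, this prefix consists of rows $1, \ldots, r+h-2$ in full together with the entries of row $r+h-1$ in columns $\ge c+n$ (if any). When we then start reading the rectangular part of row $r+h-1$ right-to-left, the values come out in consecutive blocks $u_s^{m_s}, u_{s-1}^{m_{s-1}}, \ldots, u_1^{m_1}$. Applying the lattice property at the prefix obtained after reading the first blocks $u_s^{m_s}, \ldots, u_i^{m_i}$, and using that no value equal to $u_i-1$ has been added (all added values are $\ge u_i$), yields
\[
A_{u_i - 1} \;\ge\; A_{u_i} + m_i \qquad \text{for each } i=1,\ldots,s.
\]

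To finish, I telescope: by the lattice property applied to the ``before'' prefix we have $A_h = \sum_{\ell \ge h}(A_\ell - A_{\ell+1})$, every summand is non-negative, and the terms at positions $\ell = u_i-1$ with $u_i \ge h+1$ each contribute at least $m_i$. Hence $A_h\ge \sum_{i\,:\,u_i \ge h+1} m_i$. If $u_1 > h$ this already gives $A_h \ge n$ and hence $\nu_h\ge A_h \ge n$. If $u_1 = h$, the telescope is missing the contribution $m_1$, but now the bottom row of the rectangle itself contributes $m_1$ copies of letter $h$ to $\nu_h$ that are not counted in $A_h$, yielding $\nu_h \ge A_h + m_1 \ge (n-m_1)+m_1 = n$. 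In either case $\nu_h \ge n$, which (since lattice forces $\nu$ to be non-increasing) is exactly $\gamma_n(T) \ge h$.

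The delicate point is the case $u_1 = h$: the inequality $A_{u_1-1}-A_{u_1}\ge m_1$ sits at position $\ell = h-1$, which lies \emph{below} the range of the telescoping sum defining $A_h$ and so is not available to bound $A_h$ from the ``before'' region alone. The bookkeeping that recovers this loss from the $m_1$ letters equal to $h$ appearing in the bottom row of the rectangle itself is the main subtlety to handle carefully.
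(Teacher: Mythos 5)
Your proof is correct. Note that the paper does not actually prove this lemma -- it is imported verbatim from \cite{CaicedoMargolis} -- so there is no in-paper argument to compare against; what you have written is a legitimate self-contained proof. The key steps all check out: the bottom row of the rectangle consists of entries $\geq h$ by column-strictness; the region you call the ``before'' prefix (rows above the bottom row of the rectangle, plus the boxes of that row to the right of the rectangle) really is an initial segment of the right-to-left, top-to-bottom reading word, so the counts $A_\ell$ are non-increasing in $\ell$; the inequality $A_{u_i-1}\geq A_{u_i}+m_i$ is a correct application of the lattice property at the leftmost box of the block of $u_i$'s, since no letter $u_i-1$ occurs among the blocks already read; and the telescoping over $\ell\geq h$ together with the separate accounting of the $m_1$ boxes carrying the letter $h$ itself (when $u_1=h$) gives $\nu_h\geq n$, which by the partition property of the content is exactly $\gamma_n(T)\geq h$. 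You correctly flag the only delicate point, the case $u_1=h$, and your fix is right: the inequality at position $h-1$ is useless for bounding $A_h$, but the $m_1$ occurrences of $h$ in the rectangle's bottom row are disjoint from the prefix and make up the deficit. The argument also degenerates gracefully in the edge cases $h=1$ and an empty prefix.
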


\begin{lemma}\label{lem:colums_between_lines}\cite[Lemma 3.5]{CaicedoMargolis}
 Let $T$ be a semistandard skew tableau with $\ell$ columns satisfying the lattice property. Fix some positive integers $c$, $h$ and $n$. Assume that the first $\ell-n$ columns of $T$ lie between the $(c+1)$-th and $(c+h)$-th rows. Then $\gamma_{n+1}(T)\le h$.
\end{lemma}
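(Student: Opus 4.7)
The plan is to argue by contradiction. Suppose $\gamma_{n+1}(T)\ge h+1$. Since the content $\nu=(\nu_1,\nu_2,\ldots)$ of $T$ is a partition, the letters $1,2,\ldots,h+1$ each occur at least $n+1$ times. For a fixed $i\in\{1,\ldots,h+1\}$, column strictness forces letter $i$ to occupy at least $n+1$ distinct columns; since only $n$ columns lie in $\{\ell-n+1,\ldots,\ell\}$, the $(n+1)$-th rightmost column containing letter $i$, which I call $\pi_i$, must satisfy $\pi_i\le \ell-n$. By hypothesis every box in columns $\{1,\ldots,\ell-n\}$ sits in rows $\{c+1,\ldots,c+h\}$, so the row $\rho_i$ of the letter $i$ at column $\pi_i$ lies in $\{c+1,\ldots,c+h\}$.

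The heart of the argument is the claim that $\rho_1,\rho_2,\ldots,\rho_{h+1}$ are pairwise distinct: indeed, $\{c+1,\ldots,c+h\}$ has only $h$ elements, so pigeonhole will then give the desired contradiction. Suppose instead $\rho_i=\rho_j$ for some $i<j$. Since the common row is non-decreasing and $i<j$, we get $\pi_i\le \pi_j$. In the opposite direction, I would invoke the column-reading counterpart of the lattice property for Littlewood--Richardson tableaux, namely that for every column $c$ the quantity $\tau_k^c$ (number of letter $k$'s in columns $\ge c$) satisfies $\tau_i^c\ge \tau_{i+1}^c$ for every $i$. Iterating over consecutive indices yields $\tau_i^c\ge\tau_j^c$ for $i<j$ and every $c$, and hence $\pi_i\ge\pi_j$ because $\pi_k$ is the largest $c$ with $\tau_k^c\ge n+1$. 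Thus $\pi_i=\pi_j$, but then distinct letters $i\neq j$ occupy the same column, contradicting column strictness. This establishes the claim and completes the proof.

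The main obstacle is the ``column-reading lattice'' property itself. It is a well-known fact for LR tableaux (the row-reading and column-reading words of a SSYT are Knuth equivalent, and being a lattice/Yamanouchi word is preserved by Knuth moves), but in the style of the present paper it can be verified directly by downward induction on $c$: the only non-trivial step is when column $c$ contains letter $i+1$ in some row $\rho$ but no letter $i$, and then applying the (row-)lattice property to the reading prefix that ends at cell $(\rho,c)$ shows that in fact $\tau_i^{c+1}>\tau_{i+1}^{c+1}$, which is exactly what is needed to continue the induction. Once this column-reading lattice property is in place, the two-sided squeeze on $\pi_i$ described above closes the argument.
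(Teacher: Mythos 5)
Your proof is correct, and the central idea --- the $(n+1)$\textsuperscript{th} rightmost occurrence of each of the letters $1,\dots,h+1$ is forced into rows $c+1,\dots,c+h$, and these $h+1$ rows must be pairwise distinct, giving a pigeonhole contradiction --- is exactly the right one. The route you take to pairwise distinctness, however, is more roundabout than necessary. You establish a column-reading analogue of the lattice property ($\tau^c_i\geq\tau^c_{i+1}$) and combine it with row monotonicity to squeeze $\pi_i=\pi_j$. But the (row-reading) lattice property already gives the strict monotonicity $\rho_1<\rho_2<\dots<\rho_{h+1}$ directly: since the occurrences of a fixed letter form a horizontal strip, the $(n+1)$\textsuperscript{th} rightmost occurrence of $i$ is precisely the $(n+1)$\textsuperscript{th} occurrence of $i$ in the reading word; applying the lattice condition to the prefix ending at the $(n+1)$\textsuperscript{th} occurrence of $i+1$ shows the $(n+1)$\textsuperscript{th} occurrence of $i$ is read strictly before it, hence $\rho_i\leq\rho_{i+1}$, and the equality case is impossible by the same row-monotonicity argument you use. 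Your column-lattice detour also has a small gap in the sketch: in the problematic case of the induction (column $c$ has $i+1$ at row $\rho$ but no $i$), equating the prefix count of $i$'s with $\tau^{c+1}_i$ requires showing there are no $i$'s in rows $<\rho$ and columns $\leq c$, which follows from semistandardness and the skew-shape constraints but is not automatic. Finally, a phrasing quibble: $\pi_i=\pi_j$ together with $\rho_i=\rho_j$ places two different entries in the \emph{same box}, which is the actual absurdity; two distinct letters merely sharing a column does not violate column strictness.
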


\[
\begin{tikzpicture}
\draw  (0,2) -- (0,0) -- (3,0) -- (3,1) -- (4,1) -- (4,2) -- (5,2) -- (5,4) -- (4,4) -- (4,3) -- (1,3) -- (1,2) -- (0,2
);

\draw[red, dashed] (-0.5,3.5) -- (5.5, 3.5);
\draw[red, dashed] (-0.5,0) -- (5.5, 0);
\draw[red, dashed] (0,-0.5) -- (0, 4.5);
\draw[red, dashed] (3.5,-0.5) -- (3.5, 4.5);

\node[label=south:{\Small{$\ell - n$} }] at (3.5, 5) {};
\node[label=west:{\Small{$c+1$} }] at (-0.5,3.5) {};
\node[label=west:{\Small{$c+h$} }] at (-0.5,0) {};
\node[label=north:{$T$ }] at (2,1.5) {};
 
\end{tikzpicture}
\]

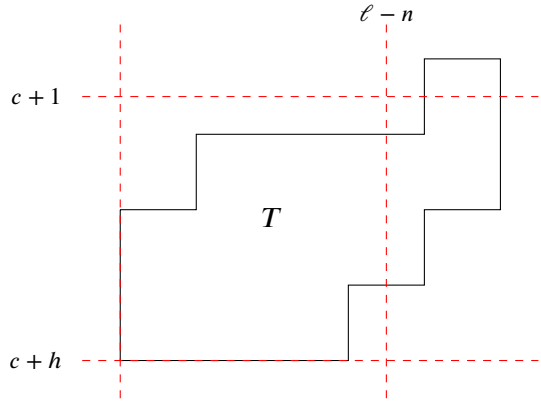
\captionof{figure}{Illustrating Lemma~\ref{lem:colums_between_lines}}

\begin{lemma}\label{lem:divided_tableau} \cite[Lemma 3.6]{CaicedoMargolis}
 Let $T$ be a semistandard skew tableau satisfying the lattice property. Divide $T$ into two skew tableaux $T'$ and $T''$ by a vertical line such that $T''$ consists of $\ell$ columns and $T'$ is the tableau on the right. Then $T'$ is again a semistandard skew tableau with the lattice property and $\gamma_{n+\ell} (T) \leq \gamma_n(T')$.
\end{lemma}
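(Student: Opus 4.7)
The plan is to verify the three assertions in the lemma in turn: that $T'$ is a semistandard skew tableau, that $T'$ satisfies the lattice property, and that $\gamma_{n+\ell}(T)\leq \gamma_n(T')$. The first assertion is immediate, since removing leftmost columns of $T$ yields a cell set whose rows are still weakly increasing, whose columns are still strictly increasing, and whose shape is still a (skew) Young diagram.

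For the inequality $\gamma_{n+\ell}(T)\leq \gamma_n(T')$, the argument is short: any letter appears at most once per column (columns are strictly increasing), so at most $\ell$ times across the $\ell$ columns of $T''$. Hence any letter with total count at least $n+\ell$ in $T$ has count at least $n$ in $T'$, which yields the desired bound on $\gamma$.

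For the lattice property of $T'$, I would induct on $\ell$, reducing to the case $\ell=1$ in which $T''$ is a single column. Fix a prefix $P'$ of the reading word $w(T')$ ending at a cell $(r,c)$ with $c\geq 2$, and let $P$ be the corresponding prefix of $w(T)$ ending at the same cell. Then $P$ differs from $P'$ only by adjoining the cells of $T''$ in rows strictly less than $r$, and these values form a strictly increasing sequence; denoting by $\epsilon_m\in\{0,1\}$ the number of $m$'s among them, the number of $m$'s in $P$ equals the number of $m$'s in $P'$ plus $\epsilon_m$. Combining this with the lattice property of $T$, the desired inequality (number of $m$'s in $P'$ is at least the number of $m+1$'s in $P'$) follows unless we are in the ``problematic'' case $\epsilon_m=1$, $\epsilon_{m+1}=0$. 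In that case one must verify the strengthened slack inequality that the number of $m$'s in $P$ is at least the number of $(m+1)$'s in $P$ \emph{plus one}. Applying $T$'s lattice property to the prefixes just before and just after reading the unique $T''$-entry equal to $m$, say at $(r^*,1)$ with $r^*<r$, shows that the slack just after $(r^*,1)$ is at least one; the remaining task is to carry this margin of slack all the way forward to $P$.

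I expect the main obstacle to be this last step, since cells of $T'$ read between $(r^*,1)$ and $(r,c)$ can in principle contain more $(m+1)$'s than $m$'s, threatening to erode the slack. Resolving this requires a careful combinatorial argument that exploits suitable intermediate prefixes of $w(T)$ together with the absence of an $(m+1)$-entry in $T''$ above row $r$ (and the strict increase along $T''$ forcing no further $m$- or $(m+1)$-entries from $T''$ in this range), to rule out a net loss of slack between the prefix ending at $(r^*,1)$ and $P$.
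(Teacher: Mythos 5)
The first thing to note is that the paper does not prove this lemma at all: it is imported verbatim from \cite[Lemma 3.6]{CaicedoMargolis}, so there is no in-paper argument to measure yours against, and I can only assess the proposal on its own terms. The routine parts are handled correctly: deleting the leftmost $\ell$ columns of a skew shape leaves a skew shape and preserves semistandardness, and since each column of a semistandard tableau contains any given letter at most once, each letter loses at most $\ell$ occurrences in passing from $T$ to $T'$, which gives $\gamma_{n+\ell}(T)\leq\gamma_n(T')$. The reduction of the lattice property to $\ell=1$, the identification of the only problematic case ($\epsilon_m=1$, $\epsilon_{m+1}=0$, where the lattice inequality for the prefix $P$ of $w(T)$ must be upgraded to a strict one), and the observation that the slack $\#_m-\#_{m+1}$ is at least $1$ for the prefix $Q$ ending at $(r^*,1)$ are all correct. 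But the step you yourself flag as ``the main obstacle'' --- propagating that unit of slack from $Q$ forward to $P$ --- is where the entire content of the lemma sits, and you do not supply it. This is a genuine gap: the slack of prefixes of $w(T)$ can dip back down after rising (it is only bounded below by $0$), so no soft monotonicity argument carries the $+1$ forward; a structural input about the deleted column is indispensable.

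The missing observation is this. Since $(r^*,1)\in T$ forces the inner shape to satisfy $\mu_i=0$ for all $i\geq r^*$, every nonempty row $i$ with $r^*<i\leq r$ actually meets column $1$; by strict increase down column $1$ the entry of $(i,1)$ is at least $m+1$, hence (not being $m+1$ for $i<r$, by hypothesis) at least $m+2$; and since rows weakly increase left to right, such a row then contains no $m$'s and no $(m+1)$'s at all. So the slack is \emph{constant}, equal to $s(Q)\geq 1$, throughout rows $r^*+1,\dots,r-1$ --- there is simply nothing there to erode it. For row $r$ there are two cases: if $(r,1)$ has entry $m+1$, then the lattice inequality $b_r\leq S_{r-1}$ for $T$ (with $b_r$ the number of $(m+1)$'s in row $r$ and $S_{r-1}$ the slack after rows $1,\dots,r-1$) shows that reading only the $b_r-1$ occurrences of $m+1$ lying in columns $\geq 2$ leaves slack at least $1$; if not, then as above row $r$ contains no $(m+1)$'s whatsoever. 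Either way $s(P)\geq 1$. (Equivalently, one can verify the row-wise lattice criterion $B_i\leq S'_{i-1}$ for $T'$ directly, splitting on the net column-$1$ contribution $\delta_{i-1}\in\{-1,0,1\}$ to the slack; the only nontrivial case, $\delta_{i-1}=1$ with $(i,1)$ not carrying $m+1$, is dispatched by the same ``entry at least $m+2$'' argument.) Without this use of the skew-shape condition together with the strict monotonicity of the deleted column, the proof is incomplete.
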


The following are new results of purely combinatorial nature which will apply to our situation.

\begin{lemma}\label{lem:RectangelAndLine}
Let $a,b,c$ be positive integers. Assume $T = T_1/T_2$ is a semistandard skew tableau satisfying the lattice property such that $T_2$ is a rectangle of height $b$ and width $c$, the first $b$ rows of $T_1$ contain at least $a + c$ boxes and the $(b+1)$-th row contains $a+c-1$ boxes, cf. Figure~\ref{fig:cutABox}. Then $\gamma_a(T) > b$ or $\gamma_{a+c}(T) > 0$.  
\end{lemma}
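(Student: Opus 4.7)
The plan is to argue by contradiction, assuming both $\gamma_a(T)\leq b$ and $\gamma_{a+c}(T)=0$. First, the first $b$ rows of $T$ start at column $c+1$ and each has length at least $a$, so $T$ contains a full $b\times a$ rectangle; by Lemma~\ref{lem:full_rectangle} this gives $\gamma_a(T)\geq b$, and combined with the assumption we get $\gamma_a(T)=b$. Together with $\gamma_{a+c}(T)=0$ this pins the content $\nu$ of $T$ to $a\leq \nu_i\leq a+c-1$ for $i\leq b$ and $\nu_{b+1}\leq a-1$. A standard induction using semistandardness and the lattice property (the rightmost entry of row $i$ is forced to equal $i$, after which non-decreasingness fills the whole row with $i$'s) shows that rows $1,\ldots,b$ of $T$ consist respectively of all $1$'s, all $2$'s, $\ldots$, all $b$'s. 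The same argument applied to the rightmost entry of row $b+1$ forces every entry of row $b+1$ in columns $c+1,\ldots,a+c-1$ to equal $b+1$; this accounts for $a-1$ copies of $b+1$ and, combined with $\nu_{b+1}\leq a-1$, pins $\nu_{b+1}=a-1$ and rules out any further occurrence of $b+1$ anywhere in $T$. Hence the $c$ entries of row $b+1$ in columns $1,\ldots,c$ take values in $\{1,\ldots,b\}$; write $s_i$ for the number of such entries equal to $i$, so $s_1+\cdots+s_b=c$.

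The main step is to exploit the lattice property as row $b+1$ is read right-to-left. Before reading row $b+1$ the counts of the reading word are $(r_1,\ldots,r_b,0,\ldots)$, where $r_i$ denotes the length of row $i$; after the $a-1$ rightmost entries (all equal to $b+1$) are consumed the counts become $(r_1,\ldots,r_b,a-1)$; and the remaining $c$ entries appear in non-increasing order. For each $j\in\{2,\ldots,b\}$ with $s_j\geq 1$ I would inspect the moment just after the final copy of $j$ in row $b+1$ has been read: at that instant the counts satisfy $c(j-1)=r_{j-1}$ and $c(j)=r_j+s_j$, so the lattice inequality $c(j-1)\geq c(j)$ reads precisely $r_{j-1}\geq r_j+s_j$. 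Combined with the trivial $r_{j-1}-r_j\geq 0=s_j$ when $s_j=0$, telescoping over $j=2,\ldots,b$ yields
\[
   r_1-r_b \;=\; \sum_{j=2}^{b}(r_{j-1}-r_j) \;\geq\; \sum_{j=2}^{b} s_j \;=\; c - s_1.
\]

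To close, $\nu_1\geq r_1+s_1$ and $\nu_1\leq a+c-1$ give $r_1\leq a+c-1-s_1$, and $r_b\geq a$ by the hypothesis on the shape of $T_1$, so $r_1-r_b\leq c-1-s_1$, contradicting the bound just displayed. The step I expect to require the most care is the lattice bookkeeping in the middle paragraph: one must pick the correct ``tightest'' moment at which to apply $c(j-1)\geq c(j)$ and verify that it really produces the clean inequality $r_{j-1}\geq r_j+s_j$. Small edge cases ($a=1$, when row $b+1$ has no right part and no $(b+1)$'s appear at all; $b=1$, when the telescoping sum is empty and the contradiction is already immediate from $r_1+s_1>a+c-1$) are absorbed by the same bookkeeping without modification.
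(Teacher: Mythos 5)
Your proof is correct, and it takes a genuinely different route from the paper's. Both arguments begin the same way, forcing the first $b$ rows to be filled with $1,\ldots,b$ respectively and the eastern $a-1$ boxes of row $b+1$ to contain $b+1$. The paper then argues by a short case distinction on the single box in column $c$ of row $b+1$: it asserts this box contains either $b+1$ (giving $\gamma_a(T)>b$) or $1$ (giving $c$ ones in row $b+1$, hence $\gamma_{a+c}(T)>0$). Your approach instead assumes $\gamma_a(T)\leq b$ and $\gamma_{a+c}(T)=0$, pins the content exactly (in particular $\nu_{b+1}=a-1$, so the western $c$ boxes of row $b+1$ carry entries in $\{1,\ldots,b\}$), and extracts the telescoping inequalities $r_{j-1}\geq r_j+s_j$ by reading off the lattice counts at the last occurrence of each letter $j$ in row $b+1$; summing and comparing with $r_b\geq a$ and $\nu_1\leq a+c-1$ yields $0\leq -1$.

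What your approach buys is completeness: the paper's dichotomy ``$b+1$ or $1$'' for the entry in column $c$ is not justified, and in fact that entry can lie strictly between. For instance with $a=b=2$, $c=1$, $T_1=(4,3,2)$, $T_2=(1,1)$ and the filling with row $1=(1,1,1)$, row $2=(2,2)$, row $3=(2,3)$, the westward neighbour has entry $2$; here the lemma's conclusion still holds via $\gamma_{a+c}(T)=\gamma_3(T)=2>0$, but the paper's stated argument does not reach it. Your telescoping handles this intermediate case uniformly, and your edge-case analysis ($a=1$, $b=1$) is also correct.
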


\begin{proof}
The entries in the first $b$ rows are clear: the $i$-th row contains the entry $i$ in each box. So in this part the entries $1,2,...,b$ come all up at least $a$ times. In the $(b+1)$-th row the $(a-1)$ boxes in the east contain $b+1$ as an entry. The westward neighbor of these boxes can contain either also $b+1$, in which case $\gamma_a(T) > b$, or it contains $1$. In the latter case also the boxes to the west contain $1$, so there are $c$ boxes in this row containing $1$ and at least $a$ boxes containing $1$ in the first row, implying $\gamma_{a+c}(T) > 0$.
\end{proof}

\[
\begin{tikzpicture}
\draw  (0,2) -- (0,0) -- (1,0) -- (1,1) -- (2.5,1) -- (2.5,2);
\draw (4.5, 2.5) -- (4,2.5) -- (4,2)  (0,2) -- (0,2.5) -- (3,2.5) -- (3,3.5);
\draw (2.5,2) -- (4,2)  (3,3.5) -- (4.5,3.5)  (4.5,3) -- (4.5,2.5);
\draw (4.5,3.5) -- (5,3.5) -- (5,3) -- (4.5,3);

\draw[red, dashed] (-0.5,2.5) -- (5.5, 2.5);
\draw[red, dashed] (-0.5,2) -- (5.5, 2);
\draw[red, dashed] (3,-0.5) -- (3, 4.5);
\draw[red, dashed] (4.5,-0.5) -- (4.5, 4.5);
\draw[red, dashed] (4,1) -- (4, 4.5);

\node[label=north:{\Small{$c$ }}] at (3, 4.3) {};
\node[label=north:{\Small{$a+c$} }] at (4.8, 4.3) {};
\node[label=west:{\Small{$b$} }] at (-0.5,2.5) {};
\node[label=east:{\Small{$b+1$} }] at (5.5,2) {};
\node[label=south:{\Tiny{$a+c-1$} }] at (4,1.2) {};
\node[label=north:{$T$ }] at (1.5,1.2) {};
 
\end{tikzpicture}
\]

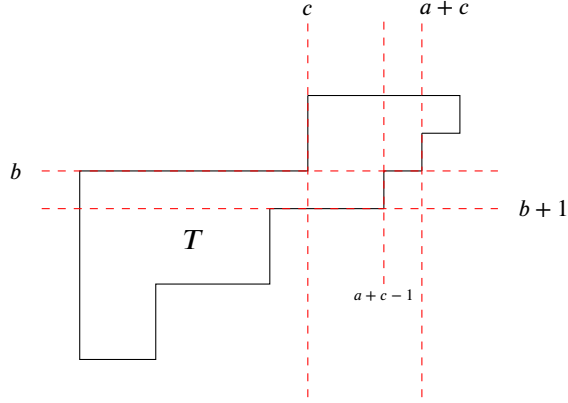
\captionof{figure}{Illustrating Lemma~\ref{lem:RectangelAndLine}}\label{fig:cutABox}

\begin{lemma}\label{lem:TooBigIndSums}
Let $m$ and $e$ be positive integers such that $m = \frac{p-1}{e}$. Assume $M$ is a $kC_p$-module with filtration 
\[0 \subseteq M_0 \subseteq ...\subseteq M_e = M\]
such that $M_0$ is the direct sum of trivial modules while $M_i/M_{i-1}$ is the direct sum of indecomposable modules of dimension $m$ for each $1 \leq i \leq e$. Then $\gamma_{2+m\cdot i}(M_i) = 0$ for each $0 \leq i \leq e$
\end{lemma}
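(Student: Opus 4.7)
I would argue by induction on $i$. The base case $i=0$ is immediate: $M_0$ is a direct sum of copies of $I_1$, so every indecomposable summand has dimension $1$, and hence $\gamma_2(M_0)=0$.

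The inductive step rests on the following auxiliary observation: \emph{for any short exact sequence $0 \to U \to M \to Q \to 0$ of $kC_p$-modules, every indecomposable summand of $M$ has dimension at most $d(U) + d(Q)$}, where $d(X)$ denotes the maximal dimension of an indecomposable summand of $X$. To prove this, let $N \cong I_d$ be an indecomposable summand of $M$; by Proposition~\ref{prop:kCpModules} it is uniserial. Then $N \cap U$ is a submodule of the uniserial module $N$, hence itself uniserial of some dimension $d'$, and it embeds into $U$. Identifying $kC_p$ with $k[x]/(x^p)$, any cyclic submodule of $\bigoplus_j I_{q_j}$ generated by a tuple $(f_1,\ldots,f_n)$ has dimension $\max_j(q_j - v(f_j)) \leq \max_j q_j$, where $v(f_j)$ is the $x$-adic valuation. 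Applied to $N\cap U\hookrightarrow U$ this gives $d' \leq d(U)$. Dually, $N/(N\cap U)$ is a uniserial quotient of $N$ that embeds as a cyclic submodule of $Q$, so by the same estimate its dimension is at most $d(Q)$. Adding these bounds across $0\to N\cap U\to N\to N/(N\cap U)\to 0$ yields $d \leq d(U)+d(Q)$.

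Applying the claim to the short exact sequence $0 \to M_{i-1} \to M_i \to M_i/M_{i-1} \to 0$ completes the induction: the inductive hypothesis gives $d(M_{i-1}) \leq 1 + m(i-1)$, and $d(M_i/M_{i-1}) = m$ by assumption (since this quotient is a direct sum of copies of $I_m$), so every indecomposable summand of $M_i$ has dimension at most $1 + mi$, which is exactly the statement $\gamma_{2+mi}(M_i) = 0$.

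The only substantive step is the auxiliary claim bounding indecomposable summands across a short exact sequence, and this is a short computation from the uniserial structure provided by Proposition~\ref{prop:kCpModules}; the rest of the argument is bookkeeping. A purely combinatorial alternative would be to translate the filtration into an iterated skew tableau with content built from rectangles via Theorem~\ref{th:ModulesAndTableaux}, and then invoke Lemma~\ref{lem:colums_between_lines} to bound row lengths, but the module-theoretic route above is more direct.
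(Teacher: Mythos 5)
Your proof is correct, and it takes a genuinely different route from the paper's. The paper argues by contradiction purely in the language of skew tableaux: assuming $\gamma_{2+m(i-1)}(M_{i-1}) = 0$ and $\gamma_{2+mi}(M_i) > 0$, the first row of the skew diagram of shape $\lambda(M_i)/\lambda(M_{i-1})$ has length greater than $m$, and since the lattice property together with semistandardness forces the entire first row to be filled with $1$'s, the content of the tableau has a part larger than $m$ --- contradicting $M_i/M_{i-1} \cong I_m^{\text{something}}$. Your argument replaces this tableau observation with a self-contained module-theoretic lemma: for any short exact sequence $0\to U\to M\to Q\to 0$ of $kC_p$-modules, the largest indecomposable summand of $M$ has dimension at most $d(U)+d(Q)$. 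Both arguments are instances of the same underlying combinatorial fact $\lambda_1 \leq \mu_1 + \nu_1$ for a nonzero Littlewood--Richardson coefficient $c^\lambda_{\mu\nu}$, but your proof of it via intersecting a uniserial direct summand $N\leq M$ with $U$ and bounding the cyclic submodule $N\cap U\leq U$ and the cyclic image $N/(N\cap U)\hookrightarrow Q$ is elementary and bypasses Theorem~\ref{th:ModulesAndTableaux} entirely. What the module-theoretic route buys is independence from the tableau machinery and an auxiliary fact of independent interest (subadditivity of $d$ along short exact sequences); what the paper's route buys is economy, since it reuses the skew-tableau framework already set up for the surrounding lemmas (Lemmas~\ref{lem:full_rectangle}--\ref{lem:divided_tableau}) in a one-paragraph argument.
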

\begin{proof}
We argue by induction. As $M_0$ is the direct sum of trivial modules the case $i = 0$ is clear. So assume that $\gamma_{2+m\cdot(i-1)}(M_{i-1}) = 0$ and $\gamma_{2+m\cdot i}(M_i) > 0$. Then the first row of the semistandard skew tableau satisfying the lattice property associated to of $M_i/M_{i-1}$ has length bigger than $2 + m \cdot i - (2 + m \cdot (i-1)) = m$. As all the entries of the first row of this skew tableau are $1$, this implies that $M_i/M_{i-1}$ has a direct indecomposable summand of dimension bigger than $m$, contradicting the properties of the filtration.
\end{proof}


\begin{lemma}\label{lem:CombOnM}
Let $m$ and $e$ be positive integers such that $m = \frac{p-1}{e}$. Assume $M$ is a $kC_p$-module with filtration 
\[0 \subseteq M_0 \subseteq ...\subseteq M_e = M\]
such that $M_0$ is the direct sum of trivial modules while $M_i/M_{i-1} \cong I_m^{\mu_i}$ for each $1 \leq i \leq e$ for some non-negative integers $\mu_1,...,\mu_e$. Then
\begin{itemize}
\item[(i)] $\gamma_{m}(M) \geq max\{\mu_i \ | \ 1 \leq i \leq e\}$,
\item[(ii)] $\gamma_{p-m+1}(M) \leq min\{\mu_i \ | \ 1 \leq i \leq e\}$.
\end{itemize}
\end{lemma}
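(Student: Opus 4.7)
The plan is to derive both inequalities from the tableau interpretation of filtrations given by Theorem~\ref{th:ModulesAndTableaux}, after first using Lemma~\ref{lem:ModSwap} to move the index realizing the extremum to the position where its subquotient is easiest to analyze. Throughout, let $\lambda$ denote the partition of $M$.

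For part (i), I would pick $i$ achieving $\mu_i = \max\{\mu_j\}$ and apply Lemma~\ref{lem:ModSwap} to reorder the filtration so that $I_m^{\mu_i}$ is the bottom step above $M_0$, and then further swap this step with $M_0$ so that $I_m^{\mu_i}$ sits as the submodule $N_1\subseteq M$ immediately above $0$. Since $N_1\cong I_m^{\mu_i}$ is a submodule of $M$, Theorem~\ref{th:ModulesAndTableaux} (applied to $N_1 \subseteq M$ with quotient the partition of $M/N_1$) produces a semistandard skew tableau of shape $\lambda/(m^{\mu_i})$, whose very existence forces $(m^{\mu_i})$ to be contained in $\lambda$ as a partition. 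Thus the first $\mu_i$ parts of $\lambda$ are at least $m$, giving $\gamma_m(M)\geq \mu_i = \max\{\mu_j\}$.

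For part (ii), I would fix an arbitrary index $i_0$, and by Lemma~\ref{lem:ModSwap} reorder the filtration so that $I_m^{\mu_{i_0}}$ is the top subquotient, i.e.\ $M/M_{e-1}\cong I_m^{\mu_{i_0}}$, while $M_{e-1}$ is built from $M_0$ by $e-1$ subsequent extensions by $I_m^{\mu_j}$'s for $j\neq i_0$. Applying Lemma~\ref{lem:TooBigIndSums} to this reordered length-$(e-1)$ filtration of $M_{e-1}$ yields $\gamma_{2+m(e-1)}(M_{e-1})=0$, and the numerical identity
\[
    2+m(e-1) \;=\; m e - m + 2 \;=\; p - m + 1
\]
shows that every part of the partition $\tau$ of $M_{e-1}$ is at most $p-m$. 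Consequently, column $p-m+1$ of the Young diagram of $\tau$ is empty.

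Now Theorem~\ref{th:ModulesAndTableaux}, applied to $M_{e-1}\subseteq M$ with quotient $I_m^{\mu_{i_0}}$, furnishes a semistandard skew tableau of shape $\lambda/\tau$ with content $(m^{\mu_{i_0}})$; in particular the labels used lie in $\{1,2,\ldots,\mu_{i_0}\}$. Since column $p-m+1$ of $\tau$ is empty, the whole of column $p-m+1$ in the diagram of $\lambda$ consists of skew boxes, and the strict-increase condition on columns of a semistandard tableau forces this column to contain at most $\mu_{i_0}$ entries. Therefore $\gamma_{p-m+1}(M)=\lambda'_{p-m+1}\leq \mu_{i_0}$, and since $i_0$ was arbitrary the conclusion follows by minimizing over $i$.

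The main observation driving the whole argument is the numerical match $p-m+1 = 2+m(e-1)$, which makes Lemma~\ref{lem:TooBigIndSums} exactly sharp enough to empty the critical column of $\tau$; once this is recognized, both parts reduce to immediate subpartition/column-height comments on the skew tableau.
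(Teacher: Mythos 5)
Your proof is correct, and part (i) is essentially the paper's argument. For part (ii), however, you take a genuinely different route. The paper places $I_m^{\mu_e}$ (with $\mu_e$ minimal) at the \emph{bottom} of the reordered filtration as a submodule, considers the skew tableau of shape $\lambda/(m^{\mu_e})$ whose content is the partition of the quotient, and argues by contradiction: if $\gamma_{p-m+1}(M)>\mu_e$, then row $\mu_e+1$ of that tableau contains a $1\times(p-m+1)$ rectangle, whence $\gamma_{p-m+1}$ of the quotient is positive by Lemma~\ref{lem:full_rectangle}, contradicting Lemma~\ref{lem:TooBigIndSums}. You instead place $I_m^{\mu_{i_0}}$ at the \emph{top} as the quotient and look at the tableau of shape $\lambda/\tau$ (with $\tau$ the partition of the new $M_{e-1}$), use Lemma~\ref{lem:TooBigIndSums} only to clear column $p-m+1$ of $\tau$, and then read off the bound $\lambda'_{p-m+1}\leq\mu_{i_0}$ directly from strict increase along that column and the fact that the content $(m^{\mu_{i_0}})$ limits entries to $\{1,\ldots,\mu_{i_0}\}$. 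Both approaches hinge on the same arithmetic identity $p-m+1=2+m(e-1)$ and Lemma~\ref{lem:TooBigIndSums}; yours is direct (no contradiction) and avoids Lemma~\ref{lem:full_rectangle} entirely, using only semistandardness, which is marginally cleaner in dependencies, whereas the paper's version is shorter to state once one is already in a proof-by-contradiction frame. One small presentational point: you should make explicit (as you implicitly use) that since $\tau$ is empty in column $p-m+1$, the boxes of $\lambda$ in that column form a contiguous vertical strip lying entirely in the skew shape, so the column-strictness condition applies to the full run.
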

\begin{proof}
To see (i) let $\mu = max\{\mu_i \ | \ 1 \leq i \leq e\}$. By Lemma~\ref{lem:ModSwap} we can permute the filtration of $M$ so that we know it contains a submodule isomorphic to $I_m^{\mu}$. It follows directly that $\gamma_m(M) \geq \mu$.

To prove (ii) we assume that $\mu_e = min\{\mu_i \ | \ 1 \leq i \leq e\}$, which again is possible by Lemma~\ref{lem:ModSwap} after permuting the filtration. Assume that $\gamma_{p-m+1}(M) > \mu_e$. By Lemma~\ref{lem:ModSwap} we can write $M_{e-1}$ as a quotient of the form $M_e/I_m^{\mu_e}$. If we consider a skew tableau of this quotient, we see that the $(\mu_e+1)$-th row has at least $p-m+1$ boxes. This row is a rectangle of height $1$ and width at least $p-m+1$, so that we conclude from Lemma~\ref{lem:full_rectangle} that $\gamma_{p-m+1}(M_{e-1}) > 0$. But as $p-m+1 = 2 + m \cdot (e-1)$ this contradicts Lemma~\ref{lem:TooBigIndSums}.
\end{proof}

Our final goal of this section is the generalization of \cite[Proposition 4.3]{CaicedoMargolis}, which is given as Proposition~\ref{prop:general_tree} below, to the more general situation around an exceptional vertex. The following notation is fixed for the rest of this section: $p$ is an odd prime, $u \in \V(\mathbb{Z}G)$ is a unit of order $pr$ for $r$ not divisible by $p$. We denote by $\xi$ a fixed $r$-th root of unity and by $\zeta$ a primitive $p$-th root of unity. $B$ denotes a block of cyclic defect and $\mathcal{O}$ a complete discrete valuation ring containing a primitive $|G|_{p'}$-th root of unity with residue field $k$ of characteristic $p$ such that any complex irreducible representation of $B$ can be realized over $\mathcal{O}$ and $p$ is as unramified as possible in $\mathcal{O}$. By a theorem of Fong this is equivalent to $\mathcal{O}$ containing the character values of the exceptional characters of $B$, cf. \cite[Remark 6]{BachleMargolis4primaryII}. We denote by $\mathcal{O}'$ the maximal subring of $\mathcal{O}$ in which $p$ is unramified. By the same theorem of Fong all the non-exceptional characters of $B$ can be realized over $\mathcal{O}'$.

\begin{proposition}\label{prop:general_tree}\cite[Proposition 4.3]{CaicedoMargolis}
Let $M$ be an irreducible $\mathcal{O}'G$-lattice with character $\chi$ lying in $B$. Assume that $D$ is a composition factor of $\bar{M}$ when viewed as $kG$-module. Let $S$ denote the subtree of the Brauer tree of $B$ consisting of the vertices lying to the same side of $D$ as $\chi$. That is, $S$ is the connected component containing $\chi$ of the graph obtained by removing $D$.
We also view $\chi$ as an element of $S$ and moreover assume that $S$ does not contain the exceptional vertex. Let $a$ be the number of vertices in $S$. Then the following hold:
\begin{itemize}
   \item[(i)] If $\delta_{\chi}=-1$, then $\gamma_{p-a, \xi}(D)\ge - \sum_{\psi\in S}\delta_{\psi}\cdot \mu(\xi \cdot \zeta, u, \psi)$,
   \item[(ii)] If $\delta_{\chi}=1$, then $\gamma_{a+1, \xi}(D)\le  \sum_{\psi\in S}\delta_{\psi}\cdot \mu(\xi \cdot \zeta, u, \psi)$.
  \end{itemize} 
  
  If there is a leaf $\chi_1$ of the Brauer tree of $B$ lying in $S$ with positive sign (i.e. $\delta_{\chi_1}=1$), then the following hold as well: 
\begin{itemize}
   \item[(iii)] If $\delta_{\chi}=-1$, then $\gamma_{p-a+1,\xi}(D)\ge - \mu(\xi , u, \chi_1) - \sum_{\psi\in S}\delta_{\psi}\cdot \mu(\xi \cdot \zeta, u, \psi)$,
   \item[(iv)] If $\delta_{\chi}=1$, then $\gamma_{a,\xi}(D)\le \mu(\xi , u, \chi_1) + \sum_{\psi\in S}\delta_{\psi}\cdot \mu(\xi \cdot \zeta, u, \psi)$.
\end{itemize}


%

\[
\begin{tikzpicture}
\draw (0,0) -- (1.4,0) -- (1.4,1.4) -- (0,1.4) -- (0,0);
\draw (1.4,0.7) -- (3.5, 0.7);


\node at (1.4,0.7) (1){};
\node at (3.5, 0.7) (2){};
\node at (0.7, 0.7) (3){};


\node[label=south:{$\chi$ }] at (1.8, 0.8) {};
\node[label=north:{$D$ }] at (2.5,0.5) {};
\node[label=north:{$S$ }] at (0.7,0.3) {};


\draw[fill=white]  (1) circle (.1cm);
\draw[fill=white]  (2) circle (.1cm);
 
\end{tikzpicture}
\] 

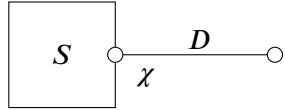
\captionof{figure}{Illustrating Proposition~\ref{prop:general_tree}}\label{BrauerBox}
\end{proposition}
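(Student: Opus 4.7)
The plan is to reduce the statement to a problem about $kC_p$-modules via the $\xi$-isotypical component under the action of the $p'$-part $u_r$ of $u$, then extract the $\gamma$-inequalities from the module-theoretic and combinatorial machinery developed in Sections~2 and~4. Since $S$ does not contain the exceptional vertex, every $\psi\in S$ is $p$-rational, and Corollary~\ref{cor:UnramifiedCase} governs the restriction of each $\mathcal O' G$-lattice $L_\psi$ with character $\psi$ to $\mathcal O'\langle u\rangle$: its reduction's $\xi$-component is isomorphic to $I_1^{a_\psi}\oplus I_{p-1}^{b_\psi}$ with $b_\psi=\mu(\xi\zeta,u,\psi)$ and $a_\psi+(p-1)b_\psi=\mu(\xi,u,\psi)$.

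The crucial structural input is that for each non-exceptional $\psi$, the reduction $\bar L_\psi$ is uniserial as a $kG$-module, with composition factors the simple modules labelling the edges incident to $\psi$ in cyclic order. Summing the Grothendieck classes $[\bar L_\psi]$ over $\psi\in S$ with the alternating signs $\delta_\psi$, each edge of $S$ is counted twice with opposite signs and therefore cancels, while $D$ has its single $S$-endpoint $\chi$ and contributes $\delta_\chi[D]$. Thus
\[\sum_{\psi\in S}\delta_\psi[\bar L_\psi]=\delta_\chi[D]\quad\text{in }K_0(kG),\]
and the identity persists after restriction to $k\langle u\rangle$ and projection onto the $\xi$-isotypical part. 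I would then argue by induction on $a=|S|$. The base case $a=1$ is immediate: $\chi$ is a leaf of the whole Brauer tree, $\bar L_\chi\cong D$, and the inequalities of (i) and (ii) become equalities by Corollary~\ref{cor:UnramifiedCase}, while (iii) and (iv) are vacuous since the only candidate leaf is $\chi$ itself and its sign is wrong. For the inductive step, pick a leaf $\chi'$ of $S$ distinct from $\chi$; it is necessarily a leaf of the full Brauer tree, because any other edge from $\chi'$ would have to exit $S$ and the only such edge is $D$, which is adjacent to $\chi$ rather than $\chi'$. Consequently $\bar L_{\chi'}$ is isomorphic to the unique simple $E$ adjacent to $\chi'$. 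Peeling $\chi'$ off and using the uniserial structure of $\bar L_{\psi}$, where $\psi$ is the $S$-neighbour of $\chi'$, produces a short exact sequence of $\mathcal O'G$-lattices to which the induction hypothesis applies on the subtree $S\setminus\{\chi'\}$.

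The main obstacle is that the Grothendieck-group identity alone is far too coarse to yield $\gamma$-bounds on $D_\xi$; one needs genuine filtrations of actual $k\langle u\rangle$-modules. To extract these, I would translate the inductive step into the language of skew tableaux via Theorem~\ref{th:ModulesAndTableaux}. Each $(\bar L_\psi)_\xi\cong I_1^{a_\psi}\oplus I_{p-1}^{b_\psi}$ corresponds to a tableau with columns of heights only $1$ or $p-1$, and the combinatorial cancellation of internal edges of $S$ amounts to overlaying these tableaux so that at most $a-1$ columns of height $p-1$ survive (corresponding to the $a-1$ edges of $S$) plus the boundary contribution from $D$. Lemmas~\ref{lem:full_rectangle}--\ref{lem:CombOnM} then quantify how many tall columns must appear in $D_\xi$, yielding $\gamma_{p-a,\xi}(D)\ge -\sum_{\psi\in S}\delta_\psi\mu(\xi\zeta,u,\psi)$ when $\delta_\chi=-1$, and dually the upper bound when $\delta_\chi=+1$.

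Parts (iii) and (iv) are a sharpening: if $\chi_1\in S$ is a leaf of the full Brauer tree with $\delta_{\chi_1}=+1$, then $\bar L_{\chi_1}$ is itself simple, and Corollary~\ref{cor:UnramifiedCase} provides $a_{\chi_1}=\mu(\xi,u,\chi_1)-(p-1)\mu(\xi\zeta,u,\chi_1)$ additional $I_1$-summands in its $\xi$-component. These extra length-$1$ columns shift the combinatorial bound by one row, sharpening $p-a$ to $p-a+1$ (respectively $a+1$ to $a$) and producing the additional $\mp\mu(\xi,u,\chi_1)$ term on the right-hand side.
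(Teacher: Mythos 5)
The paper does not give its own proof of this proposition: it is cited verbatim from \cite[Proposition 4.3]{CaicedoMargolis}, and the reader is expected to consult that reference. That said, the paper does prove the closely analogous Proposition~\ref{prop:MultSumIneq} (the exceptional-vertex version), and its proof is a direct induction on the filtration $0=M_0\le M_1\le\ldots\le M_n = M$ of $\bar M$ by the simple modules attached to the edges adjacent to the vertex, repeatedly invoking the present proposition on the subtrees $B_i$ and the tableau lemmas (\ref{lem:full_rectangle}, \ref{lem:colums_between_lines}, \ref{lem:divided_tableau}). That is the template for how this result is actually established.

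Your sketch has the right general ingredients (reduce to $k\langle u\rangle$-modules via the $\xi$-isotypical component, use Corollary~\ref{cor:UnramifiedCase}, translate to tableaux, induct on the subtree), and your Grothendieck-class cancellation $\sum_{\psi\in S}\delta_\psi[\bar L_\psi]=\delta_\chi[D]$ is correct. But there are genuine gaps. First, the numerical formula you extract from Corollary~\ref{cor:UnramifiedCase} is wrong: the $\xi$-component of $\bar L_\psi$ need not decompose as $I_1^{a_\psi}\oplus I_{p-1}^{b_\psi}$ --- the corollary only asserts $\gamma_{2,\xi}=\gamma_{p-1,\xi}=\mu(\xi\zeta,u,\psi)$, which leaves room for summands isomorphic to $I_p$; and even when there are none, the correct identity is $a_\psi=\mu(\xi,u,\psi)$, not $a_\psi+(p-1)b_\psi=\mu(\xi,u,\psi)$ (the latter is the dimension of the $\xi$-component, which equals $\mu(\xi,u,\psi)+(p-1)\mu(\xi\zeta,u,\psi)$). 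Second, and more seriously, the inductive step asserts that peeling off a leaf $\chi'$ of $S$ ``produces a short exact sequence of $\mathcal O'G$-lattices to which the induction hypothesis applies on $S\setminus\{\chi'\}$.'' There is no such short exact sequence of lattices: $L_{\chi'}$ and $L_\psi$ have incompatible characters and do not sit in an exact sequence, and the relation between them is visible only after reduction mod $\pi$ as a filtration relation on $\bar L_\psi$. Moreover, $S\setminus\{\chi'\}$ is not the connected component of $\chi$ in the graph minus $D$, so the statement of the proposition does not directly re-apply to it; the induction has to be reorganised (as in the proof of Proposition~\ref{prop:MultSumIneq}) around the edges adjacent to $\chi$ and the subtrees hanging off them. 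As written, the final combinatorial step --- converting the cancellation heuristic into a quantitative $\gamma$-bound via tableaux --- is the entire content of the proposition and is not carried out; this is exactly the part that Lemmas~\ref{lem:full_rectangle}--\ref{lem:divided_tableau} were designed for, and without an explicit invocation of them the argument does not close.
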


\begin{proposition}\label{prop:MultSumIneq}
Let $\theta$ be an exceptional character of $B$ and denote by $M$ a $kG$-module which is the reduction of an irreducible $\mathcal{O}G$-lattice with character $\theta$. Let $m$ be the number of edges of the Brauer tree. Denote by $S$ the set of non-exceptional characters in $B$. Then the following hold:
\begin{itemize}
\item[(i)] If $\delta_\theta = -1$, then $\gamma_{m+1, \xi}(M) \leq \sum_{\psi \in S} \delta_\psi \mu(\xi\cdot \zeta, u, \psi)$.
\item[(ii)] If $\delta_\theta = 1$, then $\gamma_{p-m,\xi}(M)  \geq - \sum_{\psi \in S} \delta_\psi \mu(\xi\cdot \zeta, u, \psi)$.
\end{itemize}
If there is a leaf $\chi_1$ of the Brauer tree of $B$ with positive sign (i.e. $\delta_{\chi_1}=1$), then the following hold as well: 
\begin{itemize}
\item[(iii)] If $\delta_\theta = -1$, then $\gamma_{m,\xi}(M)  \leq \mu(\xi,u,\chi_1) + \sum_{\psi \in S} \delta_\psi \mu(\xi\cdot \zeta, u, \psi)$. 
\item[(iv)] If $\delta_\theta = 1$, then $\gamma_{p-m+1,\xi}(M)  \geq -\mu(\xi,u,\chi_1) - \sum_{\psi \in S} \delta_\psi \mu(\xi\cdot \zeta, u, \psi)$. 
\end{itemize}

\end{proposition}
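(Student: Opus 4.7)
The plan is to combine the lattice-method filtration of Corollary~\ref{cor:FlorianTamelyRamfied} applied to a lattice affording $\theta$ with an alternating character identity derived from~\eqref{eq:AltSumOnP'Els}. Let $L$ be an irreducible $\mathcal{O}G$-lattice with character $\theta$, and restrict it to $\mathcal{O}\langle u \rangle$. Applying Corollary~\ref{cor:FlorianTamelyRamfied}, the $\xi$-direct summand of the reduction $M=L/\pi L$ carries a filtration $0\leq N_1 \leq \cdots \leq N_{e+1} = M_\xi$ with $N_1\cong I_1^{\mu(\xi, u, \theta)}$ and $N_{i+1}/N_i \cong I_m^{\mu(\xi\zeta_i, u, \theta)}$ for $1\leq i\leq e$, where $\zeta_1, \ldots, \zeta_e$ are representatives of the orbits of $\Gal(K(\zeta)/K)$ on primitive $p$-th roots of unity (so $em = p-1$). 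This is the structural input from the lattice method at the exceptional vertex; unlike the non-exceptional case used in Proposition~\ref{prop:general_tree}, the ramification of $\mathcal O$ over $\mathcal O'$ forces the extra filtration steps.

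Next, I would translate the filtration into bounds on $\gamma_{m+1,\xi}(M)$ and $\gamma_{p-m,\xi}(M)$ using the combinatorics of Young tableaux. Lemma~\ref{lem:CombOnM} already provides the baseline estimates $\gamma_{m,\xi}(M)\geq \max_i \mu(\xi\zeta_i,u,\theta)$ and $\gamma_{p-m+1,\xi}(M)\leq \min_i \mu(\xi\zeta_i,u,\theta)$; Lemma~\ref{lem:RectangelAndLine} and Lemma~\ref{lem:TooBigIndSums}, applied to the skew tableau coming from the filtration, let us shift these indices by one while accounting for the trivial bottom layer $N_1=I_1^{\mu(\xi,u,\theta)}$. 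The sign conventions of $\delta_\theta$ determine whether we read the bound as an upper bound (for $\delta_\theta=-1$) or a lower bound (for $\delta_\theta=+1$), matching parts~(i) and~(ii).

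The third and conceptually central step is the character-theoretic identity relating the filtration parameters $\mu(\xi\zeta_i,u,\theta)$ to the alternating sum $\sum_{\psi\in S}\delta_\psi \mu(\xi\zeta, u, \psi)$. Using Proposition~\ref{pr:luthar-passi-multiplicity-formula}, both quantities expand as linear combinations of $\theta(u^d)$ and $\psi(u^d)$ over divisors $d$ of $pr$. When $p\mid d$, the element $u^d$ has $p'$-order by Theorem~\ref{th:pAsofTorsionUnits}(ii), so~\eqref{eq:AltSumOnP'Els} gives $\delta_\theta \theta(u^d) = -\sum_{\psi\in S}\delta_\psi \psi(u^d)$ and these contributions match directly. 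When $p\nmid d$, the $p$-rationality of each $\psi$ together with the Galois action on primitive $p$-th roots of unity (over $\mathcal O'$ for the $\psi$, over $\mathcal O$ for $\theta$) allows us to average over Galois orbits and extract the desired inequality, yielding $-\delta_\theta\cdot(\text{extremum of the }\mu(\xi\zeta_i,u,\theta)) \leq \sum_{\psi\in S}\delta_\psi \mu(\xi\zeta,u,\psi)$ with the correct sign for each of~(i) and~(ii).

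Parts~(iii) and~(iv) are refinements using the leaf $\chi_1$ with positive sign. Applying Corollary~\ref{cor:UnramifiedCase} to a lattice affording $\chi_1$ shows that its $\xi$-reduction is the rigid module $I_1^{\mu(\xi,u,\chi_1)}\oplus I_{p-1}^{\mu(\xi\zeta,u,\chi_1)}$; since $\chi_1$ is adjacent to exactly one edge of the Brauer tree, one can subtract its contribution cleanly from the alternating sum, shifting $\gamma_{m+1}\to \gamma_m$ (resp.\ $\gamma_{p-m}\to \gamma_{p-m+1}$) and inserting the extra $\mu(\xi,u,\chi_1)$ term. The hard part will be the character-theoretic step: \eqref{eq:AltSumOnP'Els} only holds on $p'$-elements of $G$, whereas the multiplicity formula integrates evaluations at all powers $u^d$, so controlling the $p\nmid d$ contributions through a careful Galois analysis of the ramified extension $\mathcal O/\mathcal O'$ is where the subtlety—and the real work—lies.
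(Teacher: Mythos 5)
Your proposal takes the wrong structural input, and the gap is not a technicality that ``careful Galois analysis'' can close. Your first two steps feed only the eigenvalue filtration of Corollary~\ref{cor:FlorianTamelyRamfied} into the combinatorics, i.e.\ only the multiplicities $\mu(\xi\zeta_i,u,\theta)$ of the \emph{exceptional} character itself. But the inequalities (i)--(iv) bound $\gamma(M)$ by the alternating sum over the \emph{non-exceptional} characters, and these two families of quantities are not comparable by any character identity: for instance, in the setting of Lemmas~\ref{lem:CharValOfU} and~\ref{lem:SumPsi1} one has $\mu(\zeta_i,u,\theta)-\sum_{\psi\in S}\delta_\psi\mu(\zeta,u,\psi)=\varepsilon_{g_i}(u)$, an unknown integer of arbitrary sign. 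Your step 3 founders precisely here: \eqref{eq:AltSumOnP'Els} holds only on $p$-regular elements, while $\theta(u^d)$ and $\psi(u^d)$ for $p\nmid d$ are combinations of character values at $p$-singular classes weighted by unknown partial augmentations, where the alternating relation fails (it must fail, since $\theta$ is irrational there while the $\psi$ are $p$-rational). No Galois averaging removes these terms. Indeed, the entire strategy of the paper (see the proof of Theorem~\ref{thm:main}) is to derive a contradiction by playing the eigenvalue-filtration bounds of Lemma~\ref{lem:CombOnM} \emph{against} the bounds of this proposition; if the latter followed from the former, the whole argument would collapse to a tautology. A secondary problem: even granting your identity, Lemma~\ref{lem:CombOnM} gives a lower bound on $\gamma_m$ and an upper bound on $\gamma_{p-m+1}$, whereas (i) and (ii) require an upper bound on $\gamma_{m+1}$ and a lower bound on $\gamma_{p-m}$; the eigenvalue filtration alone does not control these in the needed direction.

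The missing idea is the \emph{other} filtration of $M$, the one coming from the Brauer tree: $M$ has composition factors $D_1,\ldots,D_n$, one for each edge adjacent to the exceptional vertex, so one fixes $0=M_0\leq\cdots\leq M_n=M$ with $M_i/M_{i-1}\cong D_i$. The information about $S$ enters through Proposition~\ref{prop:general_tree} applied to each $D_i$, which bounds $\gamma_{|B_i|+1}(D_i)$ (resp.\ $\gamma_{p-|B_i|}(D_i)$) by $\pm\sum_{\psi\in B_i}\delta_\psi\mu(\xi\zeta,u,\psi)$ for the subtree $B_i$ hanging off that edge; since $S=B_1\cup\cdots\cup B_n$ and $m=|B_1|+\cdots+|B_n|$, an induction over the quotients $M/M_i$ using Lemmas~\ref{lem:full_rectangle}, \ref{lem:colums_between_lines} and~\ref{lem:divided_tableau} assembles these edge-by-edge bounds into the global bound on $\gamma(M)$. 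The leaf refinements (iii), (iv) come from the sharper leaf inequalities in Proposition~\ref{prop:general_tree} applied to the subtree containing $\chi_1$, not from subtracting a separate lattice computation for $\chi_1$.
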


\begin{proof}
The proof is similar to that of Proposition~\ref{prop:general_tree}, relying on the Lemmas \ref{lem:full_rectangle}, \ref{lem:colums_between_lines} and \ref{lem:divided_tableau}. As we will only consider the maximal direct summand of $M$ on which the $p'$-part of $u$ acts as $\xi$, we will denote this summand simply by $M$ and omit the subscript $\xi$ also in $\gamma$.

Let the edges adjacent to the exceptional vertex be labeled by $kG$-modules $D_1$,...,$D_n$ and let $B_1$,...,$B_n$ be the subtrees attached to these edges on the other side, cf. Figure~\ref{fig:BrauerBoxProof}. 

\[
\begin{tikzpicture}

\draw (0,1.5)--(2,1.5);
\draw (0.5,0.25)--(2,1.5);
\draw (0.5,2.75)--(2,1.5);

\draw (-0.3, 2.35) -- (0.5, 2.35) -- (0.5, 3.15) -- (-0.3, 3.15) -- (-0.3, 2.35);
\draw (-0.8, 1.1) -- (0,1.1) -- (0,1.9) -- (-0.8, 1.9) -- (-0.8, 1.1);
\draw (-0.3, -0.15) -- (0.5,-0.15) -- (0.5, 0.65) -- (-0.3, 0.65) -- (-0.3, -0.15);

 \node at (2,1.5) (2){};

 \foreach \p in {2}{
    \draw[fill=white] (\p) circle (.08cm);
 }

 \node[label=north:{\Small{$D_1$} }] at (1.3,2) {};
 \node[label=north:{\Small{$D_i$} }] at (0.9,1.3) {};
 \node[label=north:{\Small{$D_n$} }] at (1.4,0.3) {};
 \node[label=north:{\Small{$M$} }] at (2.1,1.4) {};
 \node[label=west:{\Small{$B_1$} }] at (0.6,2.7) {};
 \node[label=west:{\Small{$B_i$} }] at (0.05,1.5) {};
\node[label=west:{\Small{$B_n$} }] at (0.6,0.2) {};
 \end{tikzpicture}
\]

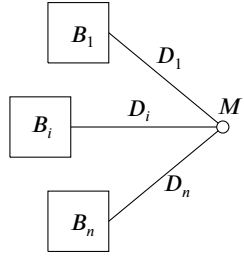
\captionof{figure}{Illustrating proof of Proposition~\ref{prop:MultSumIneq}}\label{fig:BrauerBoxProof}
\vspace*{.5cm}

We write $|B_i|$ for the number of vertices in $B_i$. We assume that $B_n$ contains $\chi_1$, if it exists. Note that 
\[m = |B_1| + ... + |B_n|.\]
Fix a filtration \[0=M_0\leq M_1\leq M_2\leq \ldots\leq M_n = M\] such that $M_i/M_{i-1}\cong D_i$ for $1\leq i \leq n$.
Let $\alpha \in \{0,1 \}$. 

Assume first $\delta_\theta = -1$, so the signs of the neighbors of $\theta$ are positive. Hence, one gets from Proposition~\ref{prop:general_tree} that $\gamma_{|B_i| +1}(D_i) \leq \sum_{\psi \in B_i} \delta_\psi \mu(\xi \cdot \zeta,u,\psi)$. We will show by induction that for any $0 \leq i \leq n-1$ one has
\begin{align}\label{eq:indneg}
\gamma_{m+\alpha - |B_1| - ... - |B_i|}(M/M_i) \geq \gamma_{m+\alpha}(M) - \sum_{\psi \in B_1 \cup ...\cup B_i} \delta_\psi \mu(\xi \cdot \zeta,u,\psi).
\end{align}
The base case $i=0$ is trivial, since the statement becomes $\gamma_{m+\alpha}(M)\geq \gamma_{m+\alpha}(M)$. 
For the induction step let $T$ be a skew tableau associated to the quotient of  $M/M_{i-1}$ by $M_i/M_{i-1}$. By the induction hypothesis, the diagram associated to $M/M_{i-1}$ contains in the northwest a rectangle $R$ of height $\gamma_{m+\alpha}(M)- \sum_{\psi\in B_1 \cup \ldots \cup B_{i-1}} \delta_\psi \mu(\xi \cdot \zeta,u,\psi)$ and width $m+\alpha-|B_1| - \ldots - |B_{i-1}|$. As $\gamma_{|B_i|+1}(D_i) \leq \sum_{\psi\in B_i} \delta_\psi \mu(\xi \cdot \zeta,u,\psi)$ by Proposition~\ref{prop:general_tree} the part of $R$ which survives in $T$ contains in its southeast corner a full rectangle of height $\gamma_{m+\alpha}(M)- \sum_{\psi\in B_1 \cup \ldots \cup B_{i}} \delta_\psi \mu(\xi \cdot \zeta,u,\psi)$ and width $m+\alpha-|B_1|-\ldots-|B_i|$, cf. Figure~\ref{fig:IndStartNeg}. So by Lemma~\ref{lem:full_rectangle} we conclude 
\[\gamma_{m+\alpha-|B_1|-\ldots-|B_i|}(M/M_i) \geq \gamma_{m+\alpha}(M) - \sum_{\psi\in B_1 \cup \ldots \cup B_i} \delta_\psi \mu(\xi \cdot \zeta,u,\psi),\]
 which finishes the induction step.

 \[
\begin{tikzpicture}
\draw  (0,2) -- (0,-1) -- (2,-1) -- (2,0) -- (3,0) -- (3,1) -- (4,1) -- (4,2) -- (5,2) -- (5,4) -- (4,4) -- (4,3) -- (1,3) -- (1,2) -- (0,2);

\draw[dashed] (-0.5,2.5) -- (3.5, 2.5);
\draw[dashed] (-0.5,1.5) -- (3.5, 1.5);
\draw[dashed] (1.5,3) -- (1.5, 4.5);
\draw[dashed] (3.5,3) -- (3.5, 5);
\draw[red,dashed] (1.5,2.5) -- (1.5, 3);
\draw[red,dashed] (3.5,2.5) -- (3.5, 3);
\node[label=north:{\Small{$m+\alpha-|B_1|-\ldots-|B_{i-1}|$} }] at (3.5, 5) {};
\node[label=north:{\Small{$|B_i|$} }] at (1.5, 4.5) {};
\node[label=west:{\Small{$\sum_{\psi\in B_i} \delta_\psi \mu(\xi \cdot \zeta,u,\psi)$} }] at (-0.5,2.5) {};
\node[label=west:{\Small{$\gamma_{m+\alpha}(M)- \sum_{\psi\in B_1 \cup \ldots \cup B_{i-1}} \delta_\psi \mu(\xi \cdot \zeta,u,\psi)$} }] at (-0.5,1.5) {};
\node[label=south:{\Small{$T$} }] at (1.5, 1) {};
\draw[red] (1.5,1.5)  -- (3.5, 1.5) -- (3.5, 2.5) -- (1.5,2.5) -- (1.5,1.5);
\end{tikzpicture}
\]
\captionof{figure}{Illustrating the case $\delta_\theta = -1$ in the proof of Proposition~\ref{prop:MultSumIneq}}\label{fig:IndStartNeg}
\vspace*{.5cm}

As $m + \alpha - |B_1| - ... - |B_{n-1}| = |B_n| + \alpha$ and $M/M_{n-1} \cong D_n$ we obtain from the inequality \eqref{eq:indneg} in case $i = n-1$ that
\[\gamma_{|B_n| + \alpha} (D_n) \geq \gamma_{m+\alpha}(M) - \sum_{\psi \in B_1 \cup ...\cup B_{n-1}} \delta_\psi \mu(\xi\cdot\zeta,u,\psi). \]
By Proposition~\ref{prop:general_tree} we know $\gamma_{|B_n| + 1}(D_n) \leq \sum_{\psi \in B_n} \mu(\xi\cdot \zeta,u,\psi)$, so by setting $\alpha=1$ this inequality implies (i). On the other hand, also by Proposition~\ref{prop:general_tree}, we know  $\gamma_{|B_n|}(D_n) \leq \mu(\xi,u,\chi_1) + \sum_{\psi \in B_n} \mu(\xi\cdot\zeta,u,\psi)$, so that setting $\alpha=0$ implies (iii). 


Now assume $\delta_\theta = 1$, so the signs of the neighbors of $\theta$ are negative. By Proposition~\ref{prop:general_tree} we have $\gamma_{p-|B_i|}(D_i) \geq -\sum_{\psi \in B_i}\delta_\psi \mu(\xi\cdot \zeta,u,\psi)$. We will show by induction that
\begin{align}\label{eq:indpos}
\gamma_{p-m+\alpha+|B_1| + ... + |B_i|}(M/M_i) \leq \gamma_{p-m+\alpha}(M) + \sum_{\psi \in B_1 \cup ... \cup B_i}\delta_\psi \mu(\xi\cdot\zeta,u,\psi)
\end{align}
for $0 \leq i \leq n-1$ under the assumptions of (ii) and (iv), respectively. Again, the base case $i=0$ is trivial.

For the induction step consider a skew tableau $T$ corresponding to the quotient of  $M/M_{i-1}$ by $M_i/M_{i-1}$. Divide $T$ into two skew tableaux $T'$ and $T''$ such that $T''$ lies on the left and contains $p-m-1+\alpha+|B_1|+...+|B_{i-1}|$ columns. As $\gamma_{p-|B_i|}(D_i)  \geq  -\sum_{\psi \in B_i}\delta_\psi \mu(\xi\cdot \zeta,u,\psi)$ by Proposition~\ref{prop:general_tree} the first
\[p-|B_i| - (p-m-1+\alpha+|B_1|+...+|B_{i-1}|) = m+1-\alpha-|B_1|-...-|B_i| \]
columns of $T'$ contain no box in the first $ -\sum_{\psi \in B_i}\delta_\psi \mu(\xi\cdot \zeta,u,\psi)$ rows. By the induction hypothesis 
\[\gamma_{p-m+\alpha+|B_1|+...|B_{i-1}|}(M/M_{i-1}) \leq \gamma_{p-m+\alpha}(M) +  \sum_{\psi \in B_1 \cup ...\cup B_{i-1}}\delta_\psi \mu(\xi\cdot \zeta,u,\psi),\]
 which means that the first $m+1-\alpha-|B_1|-...-|B_i|$ columns of $T'$ lie between the $(1-\sum_{\psi \in B_i}\delta_\psi \mu(\xi\cdot \zeta,u,\psi))$-th and $\gamma_{p-m+\alpha}(M) + \sum_{\psi \in B_1 \cup ...\cup B_{i-1}}\delta_\psi \mu(\xi\cdot \zeta,u,\psi)$ rows, cf. Figure~\ref{fig:IndStartPos}. Moreover, $T'$ has at most
\[p-(p-m-1+\alpha+|B_1|+...+|B_{i-1}|) = m + 1-\alpha - |B_1|-...-|B_{i-1}|\]
columns. Hence by Lemma~\ref{lem:colums_between_lines} we know $\gamma_{|B_i|+1}(T') \leq\gamma_{p-m+\alpha}(M) + \sum_{\psi \in B_1 \cup ...\cup B_i}\delta_\psi \mu(\xi\cdot \zeta,u,\psi)$ and by Lemma~\ref{lem:divided_tableau} the induction claim follows.

\[
\begin{tikzpicture}
\draw  (0,2) -- (0,-1) -- (1,-1) -- (1,0) -- (3,0) -- (3,1) -- (4,1) -- (4,2) -- (5,2) -- (5,4) -- (4,4) -- (4,3) -- (1,3) -- (1,2) -- (0,2);

\draw[dashed] (-0.5,3.5) -- (1.5, 3.5);
\draw[red, dashed] (1.5, 3.5)-- (3.5, 3.5);
\draw[dashed] (-0.5,-0.5) -- (1.5,-0.5); 
\draw[red, dashed] (1.5,-0.5) -- (3.5, -0.5);

\draw[dashed]  (1.5,3.5) -- (1.5, 4.5);
\draw[line width=0.1cm, gray] (1.5,4) -- (1.5, -1);
\draw[dashed] (3.5,5) -- (3.5, 3.5);
\draw[red,dashed] (3.5,3.5)--(3.5,-0.5);

\node[label=north:{\Small{$p-|B_i|$} }] at (3.5, 5) {};
\node[label=south:{\Small{$p-m-1+\alpha+|B_1|+...+|B_{i-1}|$} }] at (0.5, 5) {};
\node[label=west:{\Small{$-\sum_{\psi \in B_i}\delta_\psi \mu(\xi\cdot \zeta,u,\psi)$} }] at (-0.5,3.5) {};
\node[label=west:{\Small{$\gamma_{p-m+\alpha}(M) + \sum_{\psi \in B_1 \cup ... \cup B_{i-1}}\delta_\psi \mu(\xi\cdot\zeta,u,\psi)$} }] at (-0.5,-0.5) {};
\node[label=south:{\Small{$T''$} }] at (0.8, 1.3) {};
\node[label=south:{\Small{$T'$} }] at (2.5, 2) {};

\end{tikzpicture}
\]

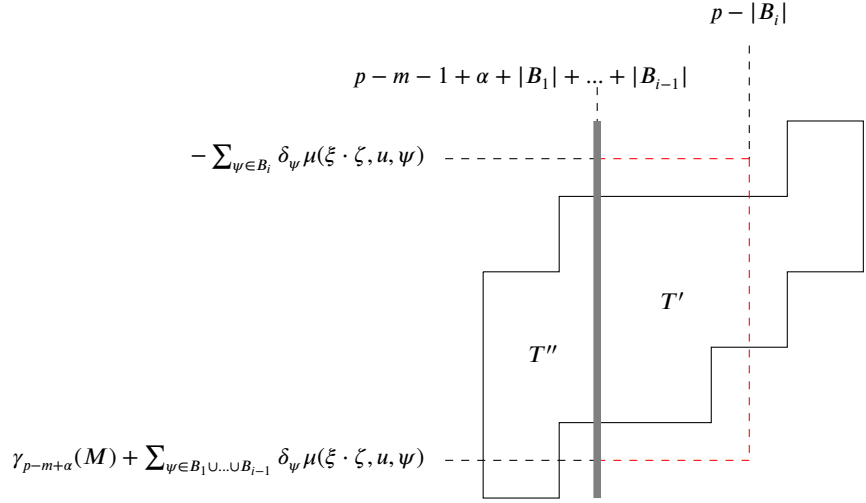
\captionof{figure}{Illustrating the case $\delta_\theta = 1$ in the proof of Proposition~\ref{prop:MultSumIneq}}\label{fig:IndStartPos}
\vspace*{.5cm}

Substituting $|B_1| + ... + |B_{n-1}| = m - |B_n|$ and $M/M_{n-1} \cong D_n$ in \eqref{eq:indpos} for $i = n-1$ we obtain
\[\gamma_{p-|B_n|+\alpha}(D_n) \leq \gamma_{p-m+\alpha}(M)   + \sum_{\psi \in B_1 \cup ... \cup B_{n-1}}\delta_\psi \mu(\xi\cdot \zeta,u,\psi). \]
Moreover, Proposition~\ref{prop:general_tree} gives $\gamma_{p-|B_n|}(D_n) \geq - \sum_{\psi \in B_n}\delta_\psi\mu(\xi\cdot \zeta,u,\psi)$ and $\gamma_{p-|B_n|+1}(D_n) \geq -\mu(\xi,u,\chi_1) - \sum_{\psi \in B_n}\delta_\psi\mu(\xi\cdot \zeta,u,\psi)$. Together with the previous inequality these give (ii) and (iv).
\end{proof}

\section{Sylow subgroups of order $p$}

In this section we will prove Theorem~\ref{thm:main}. We assume $G$ has a cyclic Sylow subgroup of order $p$ for some odd prime $p$. A unit $u \in \V(\mathbb{Z}G)$ of order $p$ is fixed throughout this section. The goal will be to show that  $u$ is rationally conjugate to a trivial unit. It turns out that it suffices to consider the principal block, as rational conjugacy within the principal block will constrain the partial augmentations, which in turn gives rational conjugacy within the whole group algebra. Note that for $p=2$ this is known and almost explicitly mentioned in \cite[Corollary 3.5]{Hertweck2006}. Also for $p=3$ this follows similarly: either there is only one conjugacy class of elements of order $3$ and so each $u \in \V(\mathbb{Z}G)$ of order $3$ is conjugate in $\mathbb{Q}G$ to a trivial unit or there are two conjugacy classes of elements of order $3$. In the latter case the Sylow $3$-subgroup has a normal $3$-complement $N$ by Burnside's $p$-complement theorem. But then a $u \in \V(\mathbb{Z}G)$ of order $3$ which is not rationally conjugate to a trivial unit, would map to a non-trivial unit in $\mathbb{Z}(G/N) \cong \mathbb{Z}C_3$ by \cite[Lemma 1.2]{DokuchaevJuriaans}. That such units do not exist was already proven by Higman \cite[Proposition 1.5.1]{GRG1}.

Let $e$ be the number of conjugacy classes of order $p$ in $G$ and $m = \frac{p-1}{e}$. If $e=1$ the result follows from Theorems~ \ref{th:MRSW} and \ref{th:pAsofTorsionUnits}, so we will assume $e \geq 2$. Let $\zeta$ be a primitive complex $p$-th root of unity and $\sigma \in Gal(\mathbb{Q}(\zeta)/\mathbb{Q})$ of order $m$. Let $\zeta_1,\zeta_2,...,\zeta_e$ be representatives of the orbits of $\langle \sigma \rangle$ on $\langle \zeta \rangle \setminus\{1\}$. Set
\[\tilde{\zeta}_i = \sum_{\tau \in \langle \sigma \rangle} \zeta_i^\tau.\]

We fix our notation for the Brauer tree of the principal block $B$ of $G$. We fix an exceptional character $\theta$ and denote by $S$ the set of non-exceptional characters. Note that $|S| = m$. We identify $S \cup \{\theta \}$ with the vertices of the Brauer tree. We also assume that the trivial character is labeled by a positive sign.


All skew tableaux appearing in this section are associated to $k\langle u \rangle$-modules and assumed to be semistandard and to satisfy the lattice property.

\subsection{Character values}

The following is a consequences of \cite[Theorem 11.1]{Navarro98}.

\begin{proposition}\label{prop:CharValuesPrinBlock}
There are representatives $g_1,...,g_e$ of all the conjugacy classes of order $p$ in $G$ such that
\[\theta(g_i) = - \delta_\theta \cdot \tilde{\zeta}_i. \]
Moreover for $\psi \in S$ we have $\psi(g_i) = \delta_\psi$ for any $i$.
\end{proposition}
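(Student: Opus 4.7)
The plan is to deduce both formulas from the description of character values of defect~$1$ blocks provided by \cite[Theorem 11.1]{Navarro98}, with the main work being to choose the representatives $g_i$ so that the exceptional values come out in the stated form.

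First I would analyze the conjugacy classes of elements of order $p$. Let $P=\langle x\rangle$ be a Sylow $p$-subgroup. Since $P$ is abelian, Burnside's fusion lemma identifies $G$-conjugacy on $P$ with $N_G(P)$-conjugacy, and the latter is governed by the cyclic group $N_G(P)/C_G(P)$, which embeds into $\Aut(P)\cong \F_p^\times$. As there are $e$ orbits on $P\setminus\{1\}$, this quotient must have order $m=(p-1)/e$. The cyclic subgroup of $\F_p^\times$ of order $m$ corresponds under the natural identification $\F_p^\times\cong \Gal(\Q(\zeta)/\Q)$ precisely to $\langle\sigma\rangle$. I can therefore pick exponents $a_1,\ldots,a_e$ such that $\zeta^{a_i}=\zeta_i$ and set $g_i:=x^{a_i}$; these form a set of representatives for the $e$ conjugacy classes of elements of order $p$, and under the identification above the $G$-orbit of $g_i$ inside $P$ corresponds to the $\langle\sigma\rangle$-orbit of $\zeta_i$ inside $\langle\zeta\rangle\setminus\{1\}$.

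Next I would address the non-exceptional characters. Every projective indecomposable $B$-module has ordinary character either $\psi+\psi'$ (edge between two non-exceptional vertices) or $\psi+\sum_{j=1}^e\theta_j$ (edge between a non-exceptional vertex and the exceptional vertex), and projective characters vanish on $p$-singular elements; applied at $g=g_i$ this yields $\psi(g_i)+\psi'(g_i)=0$ along every ``non-exceptional edge'' and $\psi(g_i)+\sum_j\theta_j(g_i)=0$ along every edge incident with the exceptional vertex. Since the trivial character lies in $S$, has $\delta_{\mathbf 1}=+1$ and takes value $1$ on every $g_i$, propagating the sign-reversal along a path in the Brauer tree that avoids the exceptional vertex gives $\psi(g_i)=\delta_\psi$ for every non-exceptional $\psi$ and every $i$. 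Note that this is consistent for all $i$ because non-exceptional characters are $p$-rational, so $\psi(g_i)\in \Z$.

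For the exceptional characters, \cite[Theorem 11.1]{Navarro98} provides the key fact: the $e$ exceptional characters $\theta_1,\ldots,\theta_e$ are $\Gal(\Q(\zeta)/\Q)$-conjugate, their common field of values is $\Q(\zeta)^{\langle\sigma\rangle}$, and on any $y\in P\setminus\{1\}$ they take values of the form $-\delta_\theta\sum_{\tau\in\langle\sigma\rangle}\lambda(y)^\tau$ for suitable linear characters $\lambda$ of $P$, the $\lambda$'s running over representatives of the $\langle\sigma\rangle$-orbits on $\Irr(P)\setminus\{\mathbf 1\}$. Evaluating at $y=g_i$ and recalling that under our chosen identification the orbit of the character $x\mapsto\zeta$ on $g_i$ is exactly $\{\zeta_i^\tau:\tau\in\langle\sigma\rangle\}$, we obtain a value $-\delta_\theta\tilde\zeta_{\pi(i)}$ for some permutation $\pi$ of $\{1,\ldots,e\}$ depending on which exceptional character we look at. By relabeling the exceptional characters (equivalently, choosing the distinguished $\theta$ appropriately) we may assume $\pi$ is the identity for $\theta$, giving $\theta(g_i)=-\delta_\theta\tilde\zeta_i$. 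The only real obstacle is this bookkeeping match between the Galois-action on $\{\theta_j\}$ and the orbit structure on $\{g_i\}$; everything substantive is contained in Navarro's theorem and the vanishing of projective characters on $p$-singular elements.
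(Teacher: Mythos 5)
Your overall route is the same as the paper's: the paper offers no argument beyond citing \cite[Theorem 11.1]{Navarro98}, and your proposal correctly supplies the bookkeeping that citation hides. The Burnside fusion argument matching the $G$-classes in $P$ with the $\langle\sigma\rangle$-orbits on $\langle\zeta\rangle\setminus\{1\}$, the choice $g_i=x^{a_i}$, and the use of the labelling freedom (of the $g_i$, or equivalently of the distinguished $\theta$) to make the permutation $\pi$ the identity are exactly what is needed; the exceptional values then drop out of Navarro's theorem as you describe (modulo the sign convention at the exceptional vertex, which the paper explicitly warns is reversed in \cite{Navarro98}).

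There is, however, one step that fails as written: the derivation of $\psi(g_i)=\delta_\psi$ by propagating the relation $\psi(g_i)+\psi'(g_i)=0$ ``along a path in the Brauer tree that avoids the exceptional vertex''. In a tree the path between two vertices is unique, so if the exceptional vertex is interior --- which happens in precisely the situations the paper later uses, e.g.\ the principal $t$-block of $\PSL(2,q)$ for $q\equiv 1\bmod 4$ or the principal $3$- and $5$-blocks of $\PSL(2,16)$, where the exceptional vertex sits in the middle of a line --- then every non-exceptional vertex on the far side of it is unreachable by such a path, and your argument says nothing about those $\psi$. The gap is easy to close: for an edge meeting the exceptional vertex the vanishing of the projective character reads $\psi(g_i)+\sum_{j}\theta_j(g_i)=0$, and since the $\lambda$'s attached to the exceptional characters run over all $\langle\sigma\rangle$-orbits, the values $\theta_j(g_i)$ for $j=1,\dots,e$ are exactly the $-\delta_\theta\tilde\zeta_{i'}$ for $i'=1,\dots,e$, whence $\sum_j\theta_j(g_i)=-\delta_\theta\sum_{i'}\tilde\zeta_{i'}=\delta_\theta$ and $\psi(g_i)=-\delta_\theta=\delta_\psi$; the sign propagation can then be continued on the other side of the exceptional vertex. (Alternatively, the non-exceptional values on $p$-singular elements are themselves part of \cite[Theorem 11.1]{Navarro98}, so you could simply cite it for this half as well, which is in effect what the paper does.)
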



Let from now on $g_1,\ldots,g_e$ be representatives of conjugacy classes of elements of order $p$ in $G$ as in Proposition~\ref{prop:CharValuesPrinBlock}. From Theorem~\ref{th:pAsofTorsionUnits} we know that $\varepsilon_g(u) = 0$ for $g$ not conjugate to any $g_i$. 

Let $D$ be a representation of $G$ realizing $\theta$ over a complete discrete valuation ring $\mathcal{O}$ containing a primitive $|G|_{p'}$-th root of unity such that $p$ is as unramified in $\mathcal{O}$ as possible, i.e. $[\mathcal{O}[\zeta]:\mathcal{O}] = m$. Moreover, $k$ denotes the residue field of $\mathcal{O}$ and $M$ the $kG$-module one obtains by reducing an irreducible $\mathcal{O}G$-lattice corresponding to $D$ modulo the maximal ideal of $\mathcal{O}$. Note that as $u$ has order $p$ any eigenvalue of $D(u)$ is a $p$-th root of unity. Define the two functions
\[\min(u) = \min\{\mu(\zeta_r, u, \theta) \ | \ 1 \leq r \leq e \} \ \ \text{and} \ \ \max(u) = \max\{\mu(\zeta_r, u, \theta) \ | \ 1 \leq r \leq e \}  \]

Note that for any two indices $i$ and $j$,  $g_i$ is conjugate to a power of $g_j$, and therefore $\min(g_i) = \min(g_j)$ and $\max(g_i) = \max(g_j)$. We will denote these quantities by $\min(g)$ and $\max(g)$, respectively.
 Moreover Proposition~\ref{prop:CharValuesPrinBlock} implies $\max(g) - \min(g) = 1$, which also is given more explicitly in the next lemma.

 Note that in this setting Proposition~\ref{pr:luthar-passi-multiplicity-formula} implies that 
 \begin{equation}\label{eq:luthar-passi-special-case}
 \mu(\zeta_i, v, \chi) = \frac{1}{p}(\chi(1) + {\rm Tr}_{\mathbb Q(\zeta)/\mathbb Q} (\chi(v)\zeta_i^{-1}))
\end{equation}
for any ordinary character $\chi$ and any unit $v\in \V(\mathbb Z G)$ of order $p$. This, combined with the fact that $ {\rm Tr}_{\mathbb Q(\zeta)/\mathbb Q} (\zeta_i)=-1$ for all $i$ and ${\rm Tr}_{\mathbb Q(\zeta)/\mathbb Q}(1)=p-1$, will be used repeatedly to compute multiplicities. 

\begin{lemma}\label{lem:CharValOfU}
Let the situation be as above and $g \in G$ of order $p$. Then for any $i$ we have:
\begin{itemize}
\item[(i)] if $\delta_\theta = -1$, then $\min(g) = \frac{\theta(1)-m}{p} $ and $\mu(\zeta_i, u, \theta) =  \varepsilon_{g_i}(u) + \min(g)$. 
\item[(ii)] if $\delta_\theta = 1$, then $\max(g)=\frac{\theta(1) + m}{p} $ and $\mu(\zeta_i, u, \theta) = -\varepsilon_{g_i}(u) + \max(g)$. 
\end{itemize}
\end{lemma}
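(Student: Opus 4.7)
The plan is to compute $\mu(\zeta_i,u,\theta)$ directly via the Luthar--Passi formula, and then specialize to $u=g_i$ to identify $\min(g)$ and $\max(g)$.

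\textbf{Step 1 (character value of $u$ on $\theta$).} By the Berman--Higman theorem and Theorem~\ref{th:pAsofTorsionUnits}(ii), for $u$ of order $p$ we have $\varepsilon_g(u)=0$ whenever $g$ is not conjugate to some $g_i$. Hence using \eqref{eq:CharactersAndPartAugs} and Proposition~\ref{prop:CharValuesPrinBlock},
\[
   \theta(u)=\sum_{j=1}^{e}\varepsilon_{g_j}(u)\theta(g_j) = -\delta_\theta\sum_{j=1}^{e}\varepsilon_{g_j}(u)\,\tilde\zeta_j.
\]
Also $\sum_j \varepsilon_{g_j}(u)=1$, since the total augmentation is $1$ and $\varepsilon_1(u)=0$.

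\textbf{Step 2 (Galois trace computation).} The key calculation is
\[
   \mathrm{Tr}_{\mathbb Q(\zeta)/\mathbb Q}(\tilde\zeta_j\zeta_i^{-1})=\begin{cases} p-m & \text{if } j=i,\\ -m & \text{if } j\neq i.\end{cases}
\]
To see this, expand $\tilde\zeta_j=\sum_{\rho\in\langle\sigma\rangle}\rho(\zeta_j)$ and reindex the outer sum over $\mathrm{Gal}(\mathbb Q(\zeta)/\mathbb Q)$. After reindexing one obtains $\sum_{\rho\in\langle\sigma\rangle}\mathrm{Tr}_{\mathbb Q(\zeta)/\mathbb Q}(\zeta_j\rho(\zeta_i)^{-1})$. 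Each summand is $\mathrm{Tr}(1)=p-1$ if $\zeta_j=\rho(\zeta_i)$ and $\mathrm{Tr}(\text{primitive $p$-th root})=-1$ otherwise. Because the $\zeta_i$'s were chosen as representatives of distinct $\langle\sigma\rangle$-orbits and the stabilizer of any primitive $p$-th root in $\langle\sigma\rangle$ is trivial, there is exactly one $\rho$ (namely $\rho=\mathrm{id}$) giving equality, and only when $j=i$.

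\textbf{Step 3 (the multiplicity formula).} Plugging Step 2 into \eqref{eq:luthar-passi-special-case} via Step 1, and using $\sum_j\varepsilon_{g_j}(u)=1$,
\[
   \mu(\zeta_i,u,\theta)=\frac{1}{p}\Bigl(\theta(1)-\delta_\theta\bigl(p\,\varepsilon_{g_i}(u)-m\bigr)\Bigr)=\frac{\theta(1)+\delta_\theta m}{p}-\delta_\theta\,\varepsilon_{g_i}(u).
\]

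\textbf{Step 4 (specialization to group elements).} Applying this formula with $u$ replaced by the unit $g_i\in G\subset\V(\mathbb Z G)$, for which $\varepsilon_{g_r}(g_i)$ is the Kronecker $\delta_{ir}$, gives
\[
   \mu(\zeta_r,g_i,\theta)=\frac{\theta(1)+\delta_\theta m}{p}-\delta_\theta\,\delta_{ir}.
\]
If $\delta_\theta=-1$ the minimum over $r$ is attained for $r\neq i$ and equals $(\theta(1)-m)/p$, yielding $\min(g)=(\theta(1)-m)/p$ and then $\mu(\zeta_i,u,\theta)=\varepsilon_{g_i}(u)+\min(g)$. If $\delta_\theta=1$ the maximum over $r$ is $(\theta(1)+m)/p$, giving $\max(g)=(\theta(1)+m)/p$ and $\mu(\zeta_i,u,\theta)=-\varepsilon_{g_i}(u)+\max(g)$. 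The one nontrivial step is the Galois trace computation in Step~2; everything else is bookkeeping.
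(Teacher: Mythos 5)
Your proof is correct and follows essentially the same route as the paper: both hinge on the Galois trace computation $\Tr_{\Q(\zeta)/\Q}(\tilde\zeta_j\zeta_i^{-1}) = -m + p\,\delta_{ij}$ plugged into the Luthar--Passi formula. The only cosmetic difference is ordering: the paper first computes $\mu(\zeta_i, g_j, \theta)$ and then handles a general unit $u$ using $\sum_j\varepsilon_{g_j}(u)=1$, whereas you derive the general formula for $u$ first and then specialize to $u = g_i$ to read off $\min(g)$ and $\max(g)$.
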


\begin{proof}
Using Proposition~\ref{prop:CharValuesPrinBlock} we get
\[ 
     \mu(\zeta_i,g_j,\theta)=\frac{1}{p}(\theta(1) -\delta_\theta\cdot {\rm Tr}_{\mathbb Q(\zeta)/\mathbb Q} (\tilde \zeta_j \zeta_i^{-1})) = \frac{1}{p}(\theta(1) -\delta_\theta\cdot (-m+p\cdot \delta_{i,j})) 
\]
where $\delta_{i,j}$ denotes the Kronecker delta. If $\delta_\theta=-1$ this attains its minimum when $i\neq j$, and if $\delta_\theta=1$ it attains its maximum when $i\neq j$. The formulas for $\min(g)$ and $\max(g)$ follow. 

Now use the fact that 
\[
   \theta(u)=-\delta_\theta\sum_{j=1}^e \varepsilon_{g_j}(u) \tilde \zeta_j \quad \textrm{and}\quad \sum_{j=1}^e \varepsilon_{g_j}(u)=1
\]
to compute
\[ 
   \begin{array}{rcl}
     \mu(\zeta_i,u,\theta)&=&\displaystyle \frac{1}{p}(\theta(1) -\delta_\theta\cdot \sum_{j=1}^e\varepsilon_{g_j}(u) {\rm Tr}_{\mathbb Q(\zeta)/\mathbb Q} (\tilde \zeta_j \zeta_i^{-1})) \\&=&\displaystyle \frac{1}{p}(\theta(1) -\delta_\theta\cdot \sum_{j=1}^e\varepsilon_{g_j}(u) (-m+p\cdot \delta_{i,j})) =
     \displaystyle \frac{1}{p}(\theta(1) -\delta_\theta\cdot (-m +\varepsilon_{g_i}(u)\cdot p)). 
   \end{array}
\]
Using the formulas $\min(g)$ and $\max(g)$ from before, the claimed formulas for $\mu(\zeta_i,u,\theta)$ follow.
\end{proof}

\begin{lemma}\label{lem:SumPsi1}
We have
\[\sum_{\psi \in S} \delta_\psi \mu(\zeta, u, \psi) = \frac{-\delta_\theta \theta(1) -m}{p} \]
and therefore
\begin{itemize}
\item[(i)] if $\delta_\theta = -1$, then $\sum_{\psi \in S} \delta_\psi \mu(\zeta, u, \psi) = \min(g)$.
\item[(ii)] if $\delta_\theta = 1$, then $-\sum_{\psi \in S} \delta_\psi \mu(\zeta, u, \psi) = \max(g)$.
\end{itemize}
\end{lemma}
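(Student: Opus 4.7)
The plan is to compute $\sum_{\psi\in S}\delta_\psi\mu(\zeta,u,\psi)$ directly using the Luthar--Passi formula \eqref{eq:luthar-passi-special-case} together with the sign relation \eqref{eq:AltSumOnP'Els}. First I would observe that for every non-exceptional $\psi\in S$, the character value $\psi(u)$ is easy to evaluate: by Proposition~\ref{prop:CharValuesPrinBlock} we have $\psi(g_i)=\delta_\psi$ for every $i$, and by the Berman--Higman theorem (Theorem~\ref{th:pAsofTorsionUnits}(i)) $\varepsilon_1(u)=0$, while Theorem~\ref{th:pAsofTorsionUnits}(ii) says $\varepsilon_g(u)=0$ for $g$ not of $p$-power order. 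Since the partial augmentations of $u$ sum to $1$, expanding \eqref{eq:CharactersAndPartAugs} gives
\[
\psi(u)=\sum_{i=1}^e \varepsilon_{g_i}(u)\,\psi(g_i)=\delta_\psi\sum_{i=1}^e \varepsilon_{g_i}(u)=\delta_\psi.
\]

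Next I would feed this into \eqref{eq:luthar-passi-special-case}. Using $\operatorname{Tr}_{\mathbb Q(\zeta)/\mathbb Q}(\zeta^{-1})=-1$ (as $\zeta^{-1}$ is a primitive $p$-th root of unity), I obtain
\[
\mu(\zeta,u,\psi)=\frac{1}{p}\bigl(\psi(1)-\delta_\psi\bigr),\qquad \text{so}\qquad \delta_\psi\mu(\zeta,u,\psi)=\frac{\delta_\psi\psi(1)-1}{p}.
\]
Summing over the $m=|S|$ non-exceptional characters gives
\[
\sum_{\psi\in S}\delta_\psi\mu(\zeta,u,\psi)=\frac{1}{p}\Bigl(\sum_{\psi\in S}\delta_\psi\psi(1)-m\Bigr).
\]
Now apply the alternating sign identity \eqref{eq:AltSumOnP'Els} at the identity element $g=1$ (which is $p$-regular): this yields $\sum_{\psi\in S}\delta_\psi\psi(1)=-\delta_\theta\theta(1)$, which immediately gives the claimed formula
\[
\sum_{\psi\in S}\delta_\psi\mu(\zeta,u,\psi)=\frac{-\delta_\theta\theta(1)-m}{p}.
\]

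Finally, to deduce (i) and (ii), I would simply compare with Lemma~\ref{lem:CharValOfU}. If $\delta_\theta=-1$, the lemma gives $\min(g)=(\theta(1)-m)/p=(-\delta_\theta\theta(1)-m)/p$, matching the sum. If $\delta_\theta=1$, the lemma gives $\max(g)=(\theta(1)+m)/p=-(-\delta_\theta\theta(1)-m)/p$, giving (ii) after multiplying by $-1$. There is no real obstacle here; the only slightly non-routine input is recognising that non-exceptional characters are constant (up to sign) on $p$-elements, so that $\psi(u)=\delta_\psi$ independently of the partial augmentations of $u$.
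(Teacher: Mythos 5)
Your proposal is correct and follows essentially the same route as the paper: evaluate $\psi(u)=\delta_\psi$ via the partial augmentations and Proposition~\ref{prop:CharValuesPrinBlock}, plug into the Luthar--Passi formula to get $\mu(\zeta,u,\psi)=\frac{\psi(1)-\delta_\psi}{p}$, sum using $\delta_\psi^2=1$ and $|S|=m$, apply \eqref{eq:AltSumOnP'Els} at $g=1$, and read off (i) and (ii) from Lemma~\ref{lem:CharValOfU}. No gaps.
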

\begin{proof}
As any $\psi \in S$ is $p$-rational, the value of $\psi$ on any conjugacy class of order $p$ is the same and so $\psi(u) = \psi(g_1)=\ldots=\psi(g_e)$ by Theorem~\ref{th:pAsofTorsionUnits}. As $\psi(g_i) = \delta_\psi$ (for any $i$), formula~\eqref{eq:luthar-passi-special-case} gives us $\mu(\zeta, u, \psi) = \frac{\psi(1) - \delta_\psi}{p}$. Hence, using equation~\eqref{eq:AltSumOnP'Els} and $|S| = m$ we get
\[\sum_{\psi \in S} \delta_\psi \mu(\zeta, u, \psi) = \sum_{\psi \in S} \delta_\psi \frac{\psi(1) - \delta_\psi}{p} = \frac{\sum_\psi \delta_\psi \psi(1) - 1}{p} = \frac{-\delta_\theta \theta(1) -m}{p}.  \]
The other two equations in the lemma then follow directly from Lemma~\ref{lem:CharValOfU}.
\end{proof}

\subsection{Estimating summands}

\begin{lemma}\label{lem:MinMax}
One has $\varepsilon_{g_i}(u) \leq 1$ for any $1 \leq i \leq m$. Moreover,
\begin{itemize}
\item[(i)] if $\delta_\theta = -1$, then $\gamma_m(M) = \max(u) = 1 + \min(g)$.
\item[(ii)] if $\delta_\theta = 1$, then $\gamma_{p-m+1}(M) = \min(u) = -1 + \max(g)$.
\end{itemize}
\end{lemma}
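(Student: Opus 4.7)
The plan is to sandwich $\gamma_m(M)$ (in the case $\delta_\theta=-1$) or $\gamma_{p-m+1}(M)$ (in the case $\delta_\theta=+1$) between an upper and a lower bound that are forced to coincide, thereby squeezing $\max_i\varepsilon_{g_i}(u)$ to equal $1$. The first step is to notice that the trivial character $\mathbf{1}$ is a leaf of the Brauer tree of the principal block $B$ with $\delta_{\mathbf{1}}=+1$: the reduction of the trivial $\OO G$-lattice is the trivial simple $kG$-module, so $\mathbf{1}$ is adjacent to a single edge of the tree, and by our convention it carries a positive sign. This makes parts (iii) and (iv) of Proposition~\ref{prop:MultSumIneq} available with $\chi_1=\mathbf{1}$. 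Since $u$ has order $p$ we have $r=1$ and $\xi=1$, and $\mu(1,u,\mathbf{1})=1$ is immediate.

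In the case $\delta_\theta=-1$, combining Proposition~\ref{prop:MultSumIneq}(iii) with $\chi_1=\mathbf{1}$ and Lemma~\ref{lem:SumPsi1}(i) yields the upper bound
\[
   \gamma_m(M) \leq \mu(1,u,\mathbf{1}) + \sum_{\psi\in S}\delta_\psi\mu(\zeta,u,\psi) = 1+\min(g).
\]
In the other direction, Theorem~\ref{thm:filtration} (or equivalently Corollary~\ref{cor:FlorianTamelyRamfied}) provides a filtration of $M$ of exactly the shape required by Lemma~\ref{lem:CombOnM}, with bottom piece $I_1^{\min(g)}$ and subsequent subquotients $I_m^{\mu(\zeta_i,u,\theta)}$. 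Then Lemma~\ref{lem:CombOnM}(i) gives the reverse bound $\gamma_m(M)\geq\max(u)$. Combining these and applying Lemma~\ref{lem:CharValOfU}(i), which reads $\max(u)=\max_i\varepsilon_{g_i}(u)+\min(g)$, I would obtain $\max_i\varepsilon_{g_i}(u)\leq 1$, i.e.\ $\varepsilon_{g_i}(u)\leq 1$ for each~$i$. Using Theorem~\ref{th:pAsofTorsionUnits} (Berman--Higman, together with the vanishing of partial augmentations at elements whose order does not divide~$p$) and the normalization $\varepsilon(u)=1$, the remaining partial augmentations $\varepsilon_{g_i}(u)$ are integers summing to $1$. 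Since integers all $\leq 1$ with sum $1$ must attain the value $1$, we conclude $\max_i\varepsilon_{g_i}(u)=1$, and the sandwich collapses to $\gamma_m(M)=\max(u)=1+\min(g)$.

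The case $\delta_\theta=+1$ is entirely symmetric: Proposition~\ref{prop:MultSumIneq}(iv) with $\chi_1=\mathbf{1}$ and Lemma~\ref{lem:SumPsi1}(ii) give $\gamma_{p-m+1}(M)\geq -1+\max(g)$, Lemma~\ref{lem:CombOnM}(ii) supplies the reverse inequality $\gamma_{p-m+1}(M)\leq\min(u)$, and Lemma~\ref{lem:CharValOfU}(ii) rewrites $\min(u)=-\max_i\varepsilon_{g_i}(u)+\max(g)$; the same integrality argument then closes the case. The only conceptual point, rather than any technical difficulty, is the initial recognition that the principal block automatically supplies a leaf of the Brauer tree with positive sign (namely $\mathbf{1}$), so that parts (iii) and (iv) of Proposition~\ref{prop:MultSumIneq} can be used at all. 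Everything else is bookkeeping of the integer identities we already have in place.
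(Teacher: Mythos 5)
Your proof is correct and takes essentially the same approach as the paper: sandwich $\gamma_m(M)$ (resp.\ $\gamma_{p-m+1}(M)$) between the bound from Proposition~\ref{prop:MultSumIneq} and the bound from Lemma~\ref{lem:CombOnM}, then translate via Lemma~\ref{lem:CharValOfU} and the integrality of the partial augmentations. The one point you make explicit that the paper leaves implicit is the identification of the positive-sign leaf $\chi_1$ as the trivial character $\mathbf{1}$ (giving $\mu(1,u,\mathbf{1})=1$), which is a helpful clarification rather than a departure.
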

\begin{proof}
First assume $\delta_\theta = -1$. By Lemma~\ref{lem:CombOnM} we know $\gamma_m(M) \geq \max(u)$.
By Proposition~\ref{prop:MultSumIneq} and Lemma~\ref{lem:SumPsi1}  we have 
$\gamma_m(M) \leq 1 + \sum_{\psi \in S} \delta_\psi \mu(\zeta,u,\psi)\leq 1 +\min(g)$. By Lemma~\ref{lem:CharValOfU} we know $\mu(\zeta_i, u, \theta) = \varepsilon_{g_i}(u) + \min(g)$. As certainly $\varepsilon_{g_i}(u) \geq 1$ for some $i$, this implies $1 +\min(g) \leq \max(u)$. We conclude 
that 
\[\gamma_m(M) \leq 1 +\min(g) \leq \max(u) \leq \gamma_m(M),\]
which shows that all of these quantities are equal. If there was some $i$ such that $\varepsilon_{g_i}(u) > 1$, then Lemma~\ref{lem:CharValOfU}  would imply $\max(u)> 1+\min(g)$, contradicting the equalities we just proved.

When $\delta_\theta = 1$, by Lemma~\ref{lem:CombOnM} we have $\gamma_{p-m+1}(M) \leq \min(u)$. By Proposition~\ref{prop:MultSumIneq} and Lemma~\ref{lem:SumPsi1} we have  $\gamma_{p-m+1}(M) \geq - 1 - \sum_{\psi \in S} \delta_\psi \mu(\zeta,u,\psi)\geq -1+\max(g)$. By Lemma~\ref{lem:CharValOfU} we have $\mu(\zeta_i,u,\theta) = -\varepsilon_{g_i}(u) + \max(g)$ for any $i$. As some $\varepsilon_{g_i}(u)$ is positive, we conclude $-1 + \max(g)\geq \min(u)$, 
and therefore 
\[\gamma_{p-m+1}(M)\geq -1+\max(g)\geq \min(u)\geq \gamma_{p-m+1}(M),\]
proving that all of these are equal. Again we get $\varepsilon_{g_i}(u) \leq 1$ for all $i$ using Lemma~\ref{lem:CharValOfU}.
\end{proof}

\begin{lemma}\label{lem:difference2Case}
With the notation above assume that $u$ is not rationally conjugate to a trivial unit. 
\begin{itemize}
\item[(i)] If $\delta_\theta = -1$, then $\gamma_{m+1}(M) \geq \max(u)$.
\item[(ii)] If $\delta_\theta = 1$, then $\gamma_{p-m}(M) \leq \min(u)$.
\end{itemize}
\end{lemma}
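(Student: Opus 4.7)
The plan is to prove both parts by contradiction, exploiting the extra structure coming from non-rational-conjugacy together with the filtration of Corollary~\ref{cor:FlorianTamelyRamfied}. First I would observe that the assumption that $u$ is not rationally conjugate, combined with Theorem~\ref{th:MRSW} (existence of a negative partial augmentation), Theorem~\ref{th:pAsofTorsionUnits} (support of $u$ on $p$-elements), the identity $\sum_i\varepsilon_{g_i}(u)=1$, and the upper bound $\varepsilon_{g_i}(u)\leq 1$ from Lemma~\ref{lem:MinMax}, forces at least two of the partial augmentations $\varepsilon_{g_i}(u)$ to equal $+1$. By Lemma~\ref{lem:CharValOfU}, this means that at least two of the multiplicities $\mu(\zeta_i,u,\theta)$ attain the extreme value $\max(u)$ in case~(i) and the extreme value $\min(u)$ in case~(ii). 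Let $\lambda$ denote the partition of $M$ throughout.

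For (i), I would use Lemma~\ref{lem:ModSwap} to reorder the filtration supplied by Corollary~\ref{cor:FlorianTamelyRamfied} so that $M/M_e\cong I_m^{\max(u)}$, while $M_e$ still contains a second copy of $I_m^{\max(u)}$ as a filtration piece. Applying Theorem~\ref{th:ModulesAndTableaux} together with Lemma~\ref{lem:SymmetryOfLRCoef} yields a semi-standard skew tableau $T$ of shape $\lambda/(m^{\max(u)})$ satisfying the lattice property, whose content $\nu$ (the partition of $M_e$) satisfies $\nu_i\geq m$ for $i\leq \max(u)$ by Lemma~\ref{lem:CombOnM}(i). Assuming for contradiction that $\gamma_{m+1}(\lambda)=b<\max(u)$, the equality $\gamma_m(\lambda)=\max(u)$ from Lemma~\ref{lem:MinMax}(i) splits the skew shape into a northeastern block (rows $1,\ldots,b$, columns $>m$) and a southwestern block (rows $>\max(u)$, columns $<m$), disjoint both in rows and columns. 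A short induction on the row index, combining column-strict increase (which forces every entry in row $i$ to be $\geq i$) with the lattice property applied to the rightmost entry of row~$i$ (which forces it to equal exactly~$i$, otherwise the count of letter $\ell-1$ is still $0$ when letter $\ell$ first appears), shows that row~$i$ of the NE block must be uniformly filled with the letter~$i$. Consequently all $\nu_{b+1}\geq m$ copies of the letter $b+1$ must lie in the SW block; but that block has at most $\lambda_{\max(u)+1}<m$ columns and column-strictness allows at most one occurrence of letter $b+1$ per column, giving a contradiction.

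Part~(ii) is obtained by a dual argument: I would place the two $I_m^{\min(u)}$-pieces at the top of the filtration so that $Q:=M/M_{e-1}$ is a two-step extension of $I_m^{\min(u)}$ by $I_m^{\min(u)}$, and apply Theorem~\ref{th:ModulesAndTableaux} to the pair $(M_{e-1},Q)$. Here the partition $\sigma$ of $M_{e-1}$ is restricted from above by Lemma~\ref{lem:TooBigIndSums}, which applied to the $(e-2)$-step filtration of $M_{e-1}$ by $I_m$-pieces following its trivial first piece forces every part of $\sigma$ to be at most $p-2m$, and from below by Lemma~\ref{lem:CombOnM}(i), giving $\gamma_m(\sigma)\geq\max(u)$. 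Combining these constraints with the restricted list of partitions of $Q$ that are compatible with a two-step $I_m^{\min(u)}$-filtration, a combinatorial rigidity analysis parallel to (i) rules out the existence of a summand of $M$ of dimension exactly $p-m$, yielding $\gamma_{p-m}(M)\leq \min(u)$. The main obstacle I expect is precisely this combinatorial step in~(ii): the constraint from Lemma~\ref{lem:TooBigIndSums} on $\sigma$ is essential—without it, one can exhibit skew tableaux with inadmissible content—and carefully matching the forced content of the tableau against the admissible partitions of $Q$ requires more bookkeeping than the clean NE/SW split used in~(i).
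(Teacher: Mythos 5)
Your proof of part (i) is correct, and it takes a route genuinely different from the paper. The paper reorders the filtration so that the \emph{bottom} two pieces $M_1$ and $M_2/M_1$ are both $I_m^{\max(u)}$, looks at the skew tableau attached to $M_1\leq M_2$, and applies Lemma~\ref{lem:divided_tableau} to obtain $\gamma_m(M_2/M_1)<\max(u)$, contradicting $M_2/M_1\cong I_m^{\max(u)}$. You instead move an $I_m^{\max(u)}$ to the \emph{top}, use Lemma~\ref{lem:SymmetryOfLRCoef} to work with the shape $\lambda/(m^{\max(u)})$ and content $\sigma=\lambda(M_e)$, and derive the rigidity of the NE block directly from the lattice property and column-strictness, finishing with the width bound $\lambda_{\max(u)+1}<m$ on the SW block. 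Both arguments hinge on the same two numerical facts ($\gamma_m(M)=\max(u)$ from Lemma~\ref{lem:MinMax}, and the presence of two $I_m^{\max(u)}$ pieces in the filtration), and your NE-block rigidity is essentially a hands-on rederivation of what Lemma~\ref{lem:divided_tableau} abstracts; each approach is clean. One minor caveat: Lemma~\ref{lem:CombOnM} as stated takes a filtration of length $e+1$ ending in $M$, whereas you apply its conclusion $\gamma_m\geq\max\{\mu_i\}$ to the shorter filtration of $M_e$. This is harmless because the proof of that inequality only uses Lemma~\ref{lem:ModSwap} and a single piece, but it is worth noting that you are using the proof rather than the statement verbatim.

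For part (ii) there is a genuine gap, and you have honestly flagged it. What you give is the setup (reorder so that $Q:=M/M_{e-1}$ carries both $I_m^{\min(u)}$ pieces, record $\sigma_i\leq p-2m$ from Lemma~\ref{lem:TooBigIndSums} and $\gamma_m(\sigma)\geq\max(u)$ from Lemma~\ref{lem:CombOnM}(i)), but the ``combinatorial rigidity analysis parallel to (i)'' that is supposed to rule out $\lambda_{\min(u)+1}=p-m$ is never carried out, and it is not a routine repetition: the content is now the a priori unknown partition of $Q$ (constrained only by a two-step $I_m$-filtration), so the clean NE/SW split of (i) does not transfer. The paper handles (ii) by a different scheme: it applies \emph{two successive one-step} quotients $M\twoheadrightarrow M_{e-1}\twoheadrightarrow M_{e-2}$, using the purpose-built Lemma~\ref{lem:RectangelAndLine} on the first step (to force $\gamma_{p+1-2m}(M_{e-1})>\min(u)$ after eliminating the alternative via Lemma~\ref{lem:TooBigIndSums}), and then Lemma~\ref{lem:full_rectangle} together with Lemma~\ref{lem:TooBigIndSums} on the second. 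The crucial combinatorial content your outline is missing is precisely what Lemma~\ref{lem:RectangelAndLine} encapsulates; without it, or without an explicit tableau analysis in your two-step setup, the argument for (ii) does not close.
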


\begin{proof}
By Lemma~\ref{lem:MinMax} we know $\varepsilon_{g_i}(u) \leq 1$ for any $i$. Hence if $u$ is not rationally conjugate to a trivial unit, i.e. there is a negative partial augmentation by Theorem~\ref{th:MRSW}, there are at least two partial augmentations of value $1$, since the total augmentation of $u$ equals $1$. Note that in particular $e \geq 2$.

First assume that $\delta_\theta = -1$. By Lemma~\ref{lem:CharValOfU} this implies that the set $\{1 \leq i \leq e \ | \ \mu(\zeta_i,u,\theta) = \max(u) \}$ has at least two elements. Let $0 \subseteq M_1 \subseteq ... \subseteq M_{e+1} = M$ as in Corollary~\ref{cor:FlorianTamelyRamfied} with 
$M_i/M_{i-1}\cong  I_m^{\max(u)}$ for at least two values of $i$. Using Lemma~\ref{lem:ModSwap} we can assume without loss of generality that  $M_1 \cong I_m^{\max(u)}$ and $M_2/M_1 \cong I_m^{\max(u)}$. Assume by way of contradiction that $\gamma_{m+1}(M) < \max(u)$, which also implies $\gamma_{m+1}(M_2) < \max(u)$. Because $\gamma_m(M_1) = \gamma_m(M) = \max(u)$, the last equation coming from Lemma~\ref{lem:MinMax}, also $\gamma_m(M_2) = \max(u)$. So when we consider a skew tableau of $M_2/M_1$ the $\max(u)$-th row is empty. We divide this skew tableau by a vertical line into two skew tableaux such that the first $m-1$ columns lie on the left and the skew tableau on the right is called $T'$, cf. Figure~\ref{fig:NonTrivUNeg}. As $\gamma_m(M_1) = \gamma_m(M_2) = \max(u)$, we know that $T'$ has at most $\max(u)-1$ non-empty rows. This directly implies that $\gamma_1(T') < \max(u)$ as the maximal entry which can appear in $T'$ is $\max(u)-1$. Lemma~\ref{lem:divided_tableau} then gives $\gamma_m(M_2/M_1) < \max(u)$, contradicting the isomorphism type of $M_2/M_1$.

\[
\begin{tikzpicture}
\draw  (0,2) -- (0,0) -- (1,0) -- (1,1) -- (1.5,1) -- (1.5,1.5) -- (2.5,1.5) -- (2.5,2) -- (0,2);
\draw  (3,2.5) -- (3.5,2.5) -- (3.5,3.5) -- (4.5,3.5) -- (4.5,4) -- (3,4) -- (3,2.5);

\draw[red, dashed] (-0.5,2) -- (5.5, 2);
\draw[red, dashed] (-0.5,2.5) -- (5.5, 2.5);

\draw[line width=0.1cm, gray] (2.5,4) -- (2.5, -1);

\node[label=west:{\Small{$\max(u)$} }] at (-0.5, 2) {};
\node[label=east:{\Small{$\max(u)-1$} }] at (5.5, 2.5) {};
\node[label=north:{\Small{$m-1$} }] at (2.5,4) {};

\end{tikzpicture}
\]

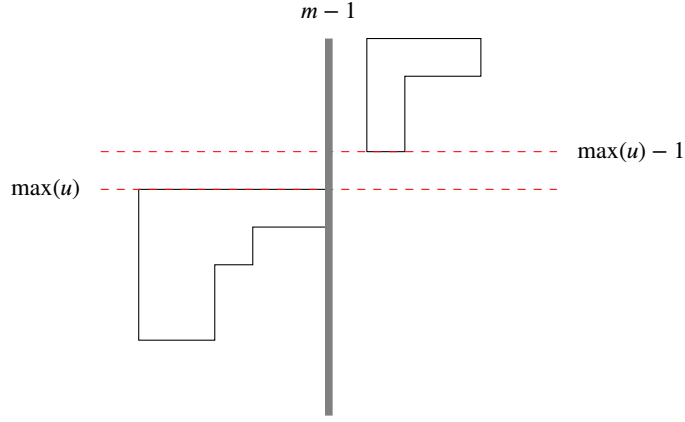
\captionof{figure}{Illustrating the case $\delta_\theta = -1$ in the proof of Lemma~\ref{lem:difference2Case}}\label{fig:NonTrivUNeg}
\vspace*{.5cm}

Now assume $\delta_\theta=1$. From the partial augmentations and Lemma~\ref{lem:CharValOfU} we obtain that the set $\{1 \leq i \leq e \ | \ \mu(\zeta_i,u,\theta) = \min(u)\}$ contains at least two elements. Let $0 \subseteq M_0 \subseteq \ldots \subseteq M_e = M$ be a filtration as in Corollary~\ref{cor:FlorianTamelyRamfied}. Using Lemma~\ref{lem:ModSwap} we can make it so that $M_{e}/M_{e-1} \cong M_{e-1}/M_{e-2} \cong I_m^{\min(u)}$ and $M_0$ is a direct sum of trivial modules. Now assume that $\gamma_{p-m}(M) > \min(u)$. Note that $\gamma_{p-m+1}(M) = \min(u)$ by Lemma~\ref{lem:MinMax}. Using Lemma~\ref{lem:ModSwap} we can write $M_{e-1}$ as a quotient $M/I_m^{\min(u)}$. Let us consider a skew tableau corresponding to this quotient. We see that the northwest corner of this tableau has shape as described in Lemma~\ref{lem:RectangelAndLine} with a rectangle of width $m$ and height $\min(u)$ removed, the first $\min(u)$ rows of $M$ containing at least $p-m+1$ boxes and the $(min(u)+1)$-th row containing $p-m$ boxes. The last fact follows from the assumption $\gamma_{p-m}(M) > \min(u)$. So by Lemma~\ref{lem:RectangelAndLine} we have $\gamma_{p+1-2m}(M_{e-1}) > \min(u)$ or $\gamma_{p+1-m}(M_{e-1}) > 0$. As $p+1-m=2+m(e-1) $ the inequality $\gamma_{p+1-m}(M_{e-1}) > 0$ is impossible due to Lemma~\ref{lem:TooBigIndSums}. We conclude $\gamma_{p+1-2m}(M_{e-1}) > \min(u)$. If we now consider the skew tableau of $M_{e-2}$ viewed as quotient $M_{e-1}/I_m^{\min(u)}$ (again using Lemma~\ref{lem:ModSwap}), then the $(\min(u)+1)$-th row has at least $p+1-2m$ boxes. By Lemma~\ref{lem:full_rectangle} this implies that $\gamma_{p+1-2m}(M_{e-2}) > 0$, which contradicts Lemma~\ref{lem:TooBigIndSums} as $p+1-2m=2+m(e-2) $.
\end{proof}

We are now in a position to show that the unit  $u$ is rationally conjugate to a trivial unit, which will complete the proof of Theorem~\ref{thm:main}.

\begin{proof}[Proof of Theorem~\ref{thm:main}]
Assume $u$ is not rationally conjugate to a trivial unit. Consider first the case that $\delta_\theta = -1$. By Lemma~\ref{lem:difference2Case} and Lemma~\ref{lem:MinMax} we know $\gamma_{m+1}(M) \geq \max(u)=1 + \min(g)$. On the other hand, Proposition~\ref{prop:MultSumIneq} and Lemma~\ref{lem:SumPsi1} imply $\gamma_{m+1}(M) \leq \sum_{\psi \in S}\delta_\psi \mu(\zeta,u,\psi)= \min(g)$. This is a contradiction, which shows that $u$ does not exist.

Now say $\delta_\theta = 1$. By Lemma~\ref{lem:difference2Case} and Lemma~\ref{lem:MinMax} we have $\gamma_{p-m}(M) \leq \min(u)=-1+\max(g)$. By Proposition~\ref{prop:MultSumIneq} and Lemma~\ref{lem:SumPsi1} we have $\gamma_{p-m}(M) \geq -\sum_{\psi \in S} \delta_\psi \mu(\zeta, u, \psi)=\max(g)$, which is again a contradiction. 
\end{proof}

\subsection{Remarks}

The following is a special case of our theorem, but it is still interesting to note that it is basically a direct corollary of known results.

\begin{proposition}
Assume $G$ is a $p$-solvable group and the Sylow $p$-subgroup of $G$ has order $p$. Then elements of order $p$ in $\V(\mathbb{Z}G)$ are rationally conjugate to elements of $G$.
\end{proposition}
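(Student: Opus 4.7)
The plan is to deduce this from a $p$-local conjugacy result combined with the partial augmentation criterion (Theorem~\ref{th:MRSW}).

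First I would note that, since $G$ is $p$-solvable with cyclic Sylow $p$-subgroup of order $p$, the group $G$ has $p$-length one: the quotient $G/O_{p'}(G)$ has a normal Sylow $p$-subgroup of order $p$ and trivial $O_{p'}$. A theorem of Weiss on $p$-permutation lattices, extended to the $p$-constrained setting by Cliff--Weiss, Roggenkamp--Scott and Hertweck, then asserts that every finite $p$-subgroup of $\V(\mathbb{Z}_p G)$ is conjugate in $\U(\mathbb{Q}_p G)$ to a subgroup of $G$. Applied to $\langle u\rangle$ this yields an element $g\in G$ of order $p$ and some $a\in\U(\mathbb{Q}_p G)$ with $aua^{-1}=g$; the case $g=1$ is excluded by the Berman--Higman theorem (Theorem~\ref{th:pAsofTorsionUnits}(i)).

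Next I would exploit the fact that partial augmentations are $\U(\mathbb{Q}_p G)$-invariant, since they are the coefficients of $u$ expressed on the conjugacy-class sums, and these form a $\mathbb{Q}_p$-basis of the center of $\mathbb{Q}_p G$. Consequently $\eps_x(u)=\eps_x(g)\in\{0,1\}$ for every $x\in G$. Because $u^p=1$ trivially has non-negative partial augmentations, the hypothesis of Theorem~\ref{th:MRSW} is verified for every divisor of $p$, and hence $u$ is rationally conjugate to $g$ in $\mathbb{Q}G$.

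The only substantive ingredient is the $p$-local conjugacy statement quoted in the first paragraph; once it is in hand, the passage to global rational conjugacy via partial augmentations is entirely formal. In particular, none of the Brauer tree machinery developed in the body of the paper is needed in the $p$-solvable case, which is what makes the proposition a ``direct corollary of known results.''
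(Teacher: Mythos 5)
Your second and third steps (invariance of partial augmentations under conjugation, followed by Theorem~\ref{th:MRSW}) are fine, but the first step contains a genuine gap: the theorem you invoke — that for $p$-solvable $G$ every finite $p$-subgroup of $\V(\mathbb{Z}_pG)$ is conjugate in $\U(\mathbb{Q}_pG)$ to a subgroup of $G$ — is not available in the literature in this generality. The Weiss/Cliff--Weiss/Roggenkamp--Scott/Hertweck results require either that $G$ be a $p$-group (resp.\ nilpotent), or that $G$ possess a self-centralizing normal $p$-subgroup, or (for Hertweck's result on $p$-torsion units, which is \cite[Proposition 4.2]{Hertweck2006}, the one actually used in the paper) a \emph{normal} Sylow $p$-subgroup. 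A $p$-solvable group with Sylow $p$-subgroup of order $p$ need not satisfy any of these: take $S_3$ with $p=2$ or $A_4$ with $p=3$, where $O_p(G)=1$. Your appeal to $p$-length one only tells you that $G/O_{p'}(G)$ has a normal Sylow $p$-subgroup, and this does not transfer to a local conjugacy statement for $G$ itself, because $\mathbb{Z}_p(G/O_{p'}(G))$ is merely a direct \emph{factor} of $\mathbb{Z}_pG$ (split off by the idempotent $\tfrac{1}{|O_{p'}(G)|}\sum_{n\in O_{p'}(G)}n$), so conjugacy in the quotient says nothing about the complementary factor. Worse, for a cyclic $p$-subgroup $\langle u\rangle$ the statement ``$u$ is conjugate in $\U(\mathbb{Q}_pG)$ to an element of $G$'' is \emph{equivalent} to ``all powers of $u$ have the partial augmentations of a group element'', i.e.\ (by Theorem~\ref{th:MRSW}) to the conclusion of the proposition; so as written your first paragraph essentially assumes what is to be proved.

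The argument can be repaired, and the repair is what the paper does: work globally and reduce modulo normal $p'$-subgroups rather than locally. Since conjugacy classes of $p$-elements of $G$ biject with those of $G/N$ for $N$ a normal $p'$-subgroup, the partial augmentations of $u$ (which by Theorem~\ref{th:pAsofTorsionUnits} are supported on classes of elements of order $p$) are preserved under $\mathbb{Z}G\to\mathbb{Z}(G/N)$; this is \cite[Lemma 2.1]{DokuchaevJuriaans}. Taking a minimal counterexample one may therefore assume $O_{p'}(G)=1$, whence $p$-solvability forces the Sylow $p$-subgroup to be normal (it equals $O_p(G)\neq 1$), and \emph{then} Hertweck's Proposition 4.2 applies. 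So your overall architecture (known conjugacy result $+$ partial augmentations $+$ Theorem~\ref{th:MRSW}) is the right one, but the known conjugacy result must be applied to $G/O_{p'}(G)$ after a reduction at the level of partial augmentations, not to $G$ itself.
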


\begin{proof}
Assume that $G$ is a minimal counterexample and $u \in \V(\mathbb{Z}G)$ has order $p$ and is not rationally conjugate to an element of $G$, i.e. $u$ has non-trivial partial augmentations by Theorem~\ref{th:MRSW}. If $N$ is a normal subgroup of $G$ of order not divisible by $p$, then the partial augmentations of $u$ in $\mathbb{Z}(G/N)$ do not become trivial by \cite[Lemma 2.1]{DokuchaevJuriaans}. So the only non-trivial minimal normal subgroup $N$ of $G$ is the Sylow $p$-subgroup. But then $u$ is rationally conjugate to an element of $G$ by \cite[Proposition 4.2]{Hertweck2006}.
\end{proof}

\section{Projective special linear groups}

There are direct applications of Theorem~\ref{thm:main} to units of prime order, which are often not achievable using other known methods. For instance, it follows that for $G$ the Mathieu group of degree $12$, all elements of order $11$ in $\V(\Z G)$ are rationally conjugate to trivial units. This was one of the cases studied in \cite{BovdiKonovalovM12} where the known methods proved insufficient. Another application would be that for the Suzuki group $G={\rm Sz(8)}$ units of order $7$ in $\V(\mathbb{Z}G)$ are rationally conjugate to trivial units, a case studied in \cite{BaechleDiplom}.
Many more applications of this type are possible, in particular for sporadic simple groups. 

But the methods that went into Theorem~\ref{thm:main} apply equally to units whose order is not prime. We show below that significant new results on the Zassenhaus conjecture for groups of type $\operatorname{PSL}(2,q)$ can be obtained using our methods. This is the type of simple groups for which $\V(\Z G)$  has been studied the most \cite{LutharPassi1989, WagnerDiplom, Bleher95, BleherHissKimmerle95, HertweckBrauer, HertweckA6, HertweckHoefertKimmerle, Gildea13, BaechleMargolisPSL2p3,MargolisPSL, delRioSerrano17, BachleMargolis4primaryI, BachleMargolis4primaryII, MargolisdelRioSerrano}. In fact, all non-abelian simple groups for which the Zassenhaus conjecture is known to hold are of the form $\PSL(2,q)$, and we prove it for an additional, potentially infinite family of values of $q$. While our methods apply to other types of simple groups for units of certain orders, we are not aware of any other examples where they would be sufficient to settle the Zassenhaus conjecture wholesale. 

Essentially all the pre-existing results mentioned above were obtained using the ``HeLP-method'', which was developed by Hertweck in \cite{HertweckBrauer} building on ideas by Luthar and Passi. It was implemented as a \texttt{GAP}-package in \cite{BachleMargolisHeLPPackage}. It is a character theoretic method which produces constraints on the partial augmentations of units of $p'$-order in $\V(\Z G)$ in terms of the $p$-modular representation theory of $G$. The lattice method enables us to use the $p$-modular representation theory of $G$ to produce contraints on units of order divisible by $p$ as well.
Our contribution here is in a case where the HeLP-method is known to be insufficent \cite{delRioSerrano17}.

\begin{theorem}\label{th:PSL2Ord2t}
Let $q$ be a power of an odd prime $p$, $G = \operatorname{PSL}(2,q)$ and $t$ an odd prime dividing $|G|$ such that $t^2$ does not divide $|G|$. Then units of order $2t$ in $\V(\mathbb{Z}G)$ are rationally conjugate to trivial units.
\end{theorem}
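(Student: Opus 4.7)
The plan is to verify the Marciniak--Ritter--Sehgal--Weiss criterion (Theorem~\ref{th:MRSW}) for $u$, namely that all partial augmentations of $u^d$ for $d \mid 2t$ are non-negative. Since the non-trivial divisors of $2t$ are $1$, $2$ and $t$, there are three powers to consider. The unit $u^t$ has order $2$, and since $\PSL(2,q)$ with odd $q$ has a unique conjugacy class of involutions, rational conjugacy of $u^t$ to an element of $G$ is a well-known consequence of the Berman--Higman theorem together with Theorem~\ref{th:pAsofTorsionUnits}. The unit $u^2$ has order $t$, and since $t^2 \nmid |G|$ the Sylow $t$-subgroup of $G$ is cyclic of order $t$, so Theorem~\ref{thm:main} applies directly and shows $u^2$ is rationally conjugate to an element of $G$.

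It therefore suffices to prove that $\varepsilon_g(u)\ge 0$ for every $g\in G$. By Theorem~\ref{th:pAsofTorsionUnits}(ii) only classes of order $1$, $2$, $t$ and $2t$ can have non-zero partial augmentation, and by Berman--Higman $\varepsilon_1(u)=0$. I would first run the HeLP method: apply Proposition~\ref{pr:luthar-passi-multiplicity-formula} to each irreducible character of $G$ to extract linear inequalities $\mu(\zeta,u,\chi)\ge 0$. This reduces the admissible partial-augmentation vectors to a small explicit list (the paper \cite{delRioSerrano17} shows that HeLP alone does not close the problem, so some of these vectors are genuinely non-trivial).

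For the remaining vectors I would invoke the lattice method at the prime $t$. Since the $t$-Sylow subgroup has order $t$, every $t$-block of $\OO G$ has defect at most one, and the Brauer trees of $\PSL(2,q)$ in the three possible cases ($t\mid q-1$, $t\mid q+1$, or $t=p$) are completely explicit and essentially linear, so the signs $\delta_\psi$ alternate along a straight path. For any $\OO G$-lattice $L$ affording a character $\chi$ in such a block, Corollary~\ref{cor:FlorianTamelyRamfied} describes the filtration on the summand of $L/\pi L$ where the $t'$-part of $u$ acts by a chosen root of unity $\xi$, while Proposition~\ref{prop:MultSumIneq} and Proposition~\ref{prop:general_tree} bound the corresponding invariants $\gamma_{j,\xi}(M)$ in terms of signed sums of multiplicities $\mu(\xi\zeta,u,\psi)$. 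Combining these bounds with the lower bounds from Lemma~\ref{lem:CombOnM} (applied to the $\xi$-summand of the module obtained from a well-chosen $\chi$) I expect to derive a contradiction with any hypothetical negative partial augmentation $\varepsilon_g(u)<0$.

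The main obstacle is organising the case analysis. One must split on the three possibilities $t\mid q-1$, $t\mid q+1$ and $t=p$, which yield different Brauer trees and different numbers of conjugacy classes of elements of order $t$ and $2t$; within each case one has to consider the two possible sign patterns of the tree (governed by the position of the exceptional vertex and of the trivial character) and choose the lattice-method character $\chi$ whose eigenvalues on $u$ give a tight inequality. The technical heart of the argument is to show that in each configuration the combinatorial inequalities proved in Section~4 force $\varepsilon_g(u)\ge 0$ uniformly, so that Theorem~\ref{th:MRSW} applies.
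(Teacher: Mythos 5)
Your overall strategy is the same as the paper's: reduce to showing that a hypothetical unit of order $2t$ must have non-negative partial augmentations, handle $u^t$ and $u^2$ by the unique-involution-class argument and by Theorem~\ref{thm:main} respectively, then combine HeLP with the lattice method at the prime $t$ acting on a carefully chosen Brauer tree. But there are several places where the plan either overcounts cases or leaves the crux undone.

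First, the case $t=p$ should not appear in your case analysis. If $t=p$ then $t^2\nmid|G|$ forces $q=p$, and $\PSL(2,p)$ has no elements of order $2p$; more to the point, \cite[Proposition 6.3]{HertweckBrauer} already rules out units of order $2p$ in $\V(\Z\PSL(2,q))$ for all $q=p^f$. The paper dispenses with this case in one sentence and similarly invokes \cite[Proposition 6.6]{HertweckBrauer} to settle $t=3$, leaving only $t\geq 5$ with $t\neq p$; your proposal to run the lattice method at $t=p$ would be wasted effort (and would in any case require dealing with the defining characteristic, where the relevant block is not of defect 1 in general).

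Second, you underestimate how much work ``running HeLP'' entails. The paper does not re-derive the partial-augmentation constraints; it cites del R\'io and Serrano's Theorem 2.3, which in this case isolates a single admissible non-trivial partial-augmentation vector (Equation~\eqref{eq:ParAugsPSL2}): $\varepsilon_{g_0^{(t-1)/2}}(u)=\varepsilon_{g_0^{(t+1)/2}}(u)=1$, $\varepsilon_{g_0^{t-1}}(u)=-1$, all other partial augmentations zero. This is a genuinely hard piece of input, and without it (or an equivalent derivation) the subsequent eigenvalue-multiplicity computations cannot be pinned down.

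Third, and most importantly, the actual contradiction is left as ``I expect to derive a contradiction.'' This is exactly the heart of the proof. The paper's argument is sharper than you suggest: it does not need Proposition~\ref{prop:general_tree} or Proposition~\ref{prop:MultSumIneq} here. Instead, for the principal $t$-block (a three-vertex line with the exceptional vertex either in the middle or at a leaf, according as $q\equiv 1$ or $q\equiv -1 \bmod 4$), one computes $\mu(-\zeta^{(t-1)/2},u,\eta)$ for a suitable exceptional character $\eta$ using \eqref{eq:ParAugsPSL2}, and compares with $\mu(-\zeta,u,\alpha)$ for the Steinberg character $\alpha$. Applying Corollary~\ref{cor:UnramifiedCase} to $\alpha$ and Corollary~\ref{cor:FlorianTamelyRamfied} plus Lemma~\ref{lem:CombOnM} to $\eta$ yields incompatible bounds on $\gamma_{t-1,-1}$ (or $\gamma_{2,-1}$), contradicting the Brauer-tree submodule relation between $M_\alpha$ and $M_\eta$. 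You have assembled the right toolbox, but the computation that actually closes the argument is missing.
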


\begin{proof}
If $p = t$, then the there are no units of order $2t$ in $\V(\mathbb{Z}G)$ by \cite[Proposition 6.3]{HertweckBrauer}. So assume $p \neq t$. By \cite[Proposition 6.7]{HertweckBrauer} we may assume that $G$ contains elements of order $2t$ which is equivalent to $4t \equiv \pm 1 \bmod q$. Moreover, by \cite[Proposition 6.6]{HertweckBrauer} the result holds for $t=3$, so that we can assume $t \geq 5$. These kinds of units were studied in detail using the HeLP-method by del R\'io and Serrano who were able to strongly restrict the possible partial augmentations of these units, but not to show that all of them are rationally conjugate to trivial units \cite{delRioSerrano17}. They also showed that this case cannot be excluded using only the HeLP-method. We will handle this last case which remained in their paper.

We refer to \cite[Chapter II, Sections 6-8]{HuppertI} for the group theoretical properties of $G$ which we will use. The order of an element $g \in G$ is a divisor of $p, \frac{q-1}{2}$ or $\frac{q+1}{2}$. If $g$ is not of order $p$, then the only other element in the cyclic group $\langle g \rangle$ to which it is conjugate is $g^{-1}$. It follows that with respect to the prime $t$ the principal block of $G$ contains $\frac{t-1}{2}$ exceptional characters and two non-exceptional characters. The character table of $G$ was computed generically at the beginning of the 20th century independently by Jordan and Schur. We refer to \cite[Chapter 38]{Dornhoff} for an account. The character table is enough to read off the decomposition matrices of the blocks we will be interested in, but they are also explicitly given in \cite{Burkhardt}. It follows that the two non-exceptional characters in the principal $t$-block are the trivial character and the Steinberg character which is the unique complex irreducible character of degree $q$. We denote the Steinberg character by $\alpha$. The Brauer tree of the principal $t$-block is a line with three vertices.

Now assume $u \in \V(\mathbb{Z}G)$ has order $2t$ and is not rationally conjugate to a trivial unit. Then by \cite[Theorem 2.3]{delRioSerrano17} there is an element $g_0 \in G$ of order $2t$ such that $u^t$ is rationally conjugate to $g_0^t$, $u^2$ is rationally conjugate to $g_0^2$ and the only non-vanishing partial augmentations of $u$ are
\begin{equation}\label{eq:ParAugsPSL2}
\varepsilon_{g_0^{\frac{t-1}{2}}}(u) = 1, \ \ \varepsilon_{g_0^{\frac{t+1}{2}}}(u) = 1, \ \ \varepsilon_{g_0^{t-1}}(u) = -1. 
\end{equation}

We denote by $\zeta$ a fixed primitive complex $t$-th root of unity. $\mathcal{O}$ will denote the ring obtained by adjoining to $\mathbb{Z}_t$ a $t'$-root of unity such that an irreducible representation corresponding to a complex irreducible character $\chi$ can be realized over $\mathcal{O}[\chi]$. Such an $\mathcal{O}$ exists due to a theorem of Fong, cf. \cite[Remark 6]{BachleMargolis4primaryII}. Moreover, $k$ denotes the residue field of $\mathcal{O}$ and $M_\chi$ a $kG$-module obtained by realizing an irreducible complex character $\chi$ of $G$ over $\mathcal{O}[\chi]$ and reducing modulo the maximal ideal of $\mathcal{O}$. 

We assume $u \in \V(\mathbb{Z}G)$ is a unit of order $2t$ having the partial augmentations given in \eqref{eq:ParAugsPSL2}. This will lead us to a contradiction, proving the theorem. 
Denote by ${\rm Tr}$ the map ${\rm Tr}_{\mathbb{Q}(\zeta)/\mathbb{Q}}$. For an ordinary character $\chi$ of $G$ and $i$ any integer the Luthar-Passi formula from Proposition~\ref{pr:luthar-passi-multiplicity-formula}, by the properties of $u$ given above, becomes
\begin{equation}\label{eq:MultPSL2}
\mu(-\zeta^i,u,\chi) = \frac{1}{2t}\left(\chi(1) - \chi(g_0^t) + {\rm Tr}\left(\chi(g_0^2)\zeta^{-2i}\right)  - {\rm Tr}\left(\left(\chi(g_0^{\frac{t-1}{2}}) + \chi(g_0^{\frac{t+1}{2}}) - \chi(g_0^{t-1})\right) \zeta^{-i} \right) \right)
\end{equation}
In case there is a rational constant $c$ such that $\chi(g) = c$ for any non-trivial element of order dividing $2t$ this is
\begin{equation}\label{eq:MultPSL2pRational}
\mu(-\zeta^i,u,\chi) = \frac{1}{2t}\left(\chi(1) - c + {\rm Tr}\left(c\zeta^{2i}\right)  - {\rm Tr}\left(c \zeta^{-i} \right)\right) = \frac{\chi(1)-c}{2t} 
\end{equation}

We will separate two cases corresponding to the value of $q$ modulo $4$. A module of type $M_\chi$ will always be considered as a $k\langle u \rangle$-module.

\paragraph* {\textbf{Case 1:} $q \equiv 1 \bmod 4$}
In this case the exceptional characters have degree $q+1$, i.e. the exceptional vertex of the Brauer tree is located in the middle, cf. Figure~\ref{fig:BrauerTreeMiddle}.

\[
\begin{tikzpicture}
\node[label=north:{$\mathbf{1}$}] at (0,1.5) (1){};
\node[label=north:{$\eta$}] at (1.5,1.5) (2){};
\node[label=north:{$\alpha$}] at (3,1.5) (3){};
\foreach \p in {1,2,3}{
\draw[fill=white] (\p) circle (.075cm);
}
\draw[fill=black] (2) circle (.075cm);
\draw (.075,1.5)--(1.425,1.5);
\draw (1.575,1.5)--(2.925,1.5);
\node[label=south:{$\frac{t-1}{2}$}] at (1.5,1.5) (2){};
\end{tikzpicture}
\] 

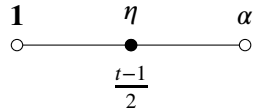
\captionof{figure}{Brauer tree of the principal block in case $q \equiv 1 \bmod 4$}\label{fig:BrauerTreeMiddle}
\vspace*{.5cm}

 Moreover, as there are elements of order $2t$ in $G$, we have $q \equiv 1 \bmod t$. The character $\alpha$ satisfies $\alpha(g) = 1$ for $g$ a non-trivial element of $G$ of order dividing $2t$.
By \eqref{eq:MultPSL2pRational} we get $\mu(-\zeta,u,\alpha) = \frac{q-1}{2t}$ which gives 
\begin{equation}\label{eq:PSL2Case1pRational}
\gamma_{t-1,-1}(M_\alpha) = \frac{q-1}{2t}, 
\end{equation}
	by Corollary~\ref{cor:UnramifiedCase} as $\alpha$ is $t$-rational. Now pick an exceptional character $\eta$ such that $\eta(g_0^i) = \zeta^i + \zeta^{-i}$ for any integer $i$ not divisible by $2t$. Such a character exists in the principal block by \cite[III]{Burkhardt}. From \eqref{eq:MultPSL2} we observe
\begin{align*}
\mu (-\zeta^{\frac{t-1}{2}},u,\eta) &= \frac{1}{2t}\left(q+1 - 2 + {\rm Tr}\left((\zeta^2 + \zeta^{-2})\zeta^{-(t-1)}\right) \right. \\
 &-  {\rm Tr}\left(\left(\zeta^{\frac{t-1}{2}} + \zeta^{-\frac{t-1}{2}} + \zeta^{\frac{t+1}{2}} + \zeta^{-\frac{t+1}{2}} - \zeta^{t-1} - \zeta^{-(t-1)} \right)	 \zeta^{-\frac{t-1}{2}}\right)  \\
 &= \frac{1}{2t}(q-1 -2 - 2(t-1)) \\
 &= \frac{q-1}{2t} -1
\end{align*}
So by Corollary~\ref{cor:FlorianTamelyRamfied} and Lemma~\ref{lem:CombOnM} applied to the part of $M_\eta$ on which $u^t$ acts as $-1$ we obtain $\gamma_{t-1,-1}(M_\eta) \leq \frac{q-1}{2t}-1$.  This together with \eqref{eq:PSL2Case1pRational} contradicts the fact that $M_\alpha$ is isomorphic to a submodule of $M_\eta$, a fact which follows from the shape of the Brauer tree.


\paragraph*{\textbf{Case 2:} $q \equiv -1 \bmod 4$}
Note that now $q \equiv -1 \bmod t$, as $G$ contains elements of order $2t$. The degree of the exceptional character is $q-1$ and the exceptional vertex of the Brauer tree is a leaf, cf.Figure~\ref{fig:BrauerTreeLeaf}.

\[
\begin{tikzpicture}
\node[label=north:{$\mathbf{1}$}] at (0,1.5) (1){};
\node[label=north:{$\alpha$}] at (1.5,1.5) (2){};
\node[label=north:{$\eta$}] at (3,1.5) (3){};
\foreach \p in {1,2,3}{
\draw[fill=white] (\p) circle (.075cm);
}
\draw[fill=black] (3) circle (.075cm);
\draw (.075,1.5)--(1.425,1.5);
\draw (1.575,1.5)--(2.925,1.5);
\node[label=south:{$\frac{t-1}{2}$}] at (3,1.5) (2){};
\end{tikzpicture}
\] 

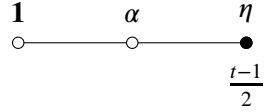
\captionof{figure}{Brauer tree of the principal block in case $q \equiv -1 \bmod 4$}\label{fig:BrauerTreeLeaf}
\vspace*{.5cm}

We have $\alpha(g) = -1$ for any non-trivial element of order dividing $2t$ which leads to $\mu(-\zeta,u,\alpha) = \frac{q+1}{2t}$ by \eqref{eq:MultPSL2pRational} and so
\begin{equation}\label{eq:PSL2Case2pRational}
\gamma_{2,-1}(M_\alpha) = \frac{q+1}{2t},
\end{equation}
by Corollary~\ref{cor:UnramifiedCase} as $\alpha$ is $t$-rational.
We choose an exceptional character $\eta$ such that $\eta(g_0^i) = -(\zeta^i + \zeta^{-i})$ for any integer $i$ not divisible by $2t$ which exists by \cite[VI]{Burkhardt}. We obtain by \eqref{eq:MultPSL2}
\begin{align*}
\mu (-\zeta^{\frac{t-1}{2}},u,\eta) &= \frac{1}{2t}\left(q-1 + 2 - {\rm Tr}\left((\zeta^2 + \zeta^{-2})\zeta^{-(t-1)}\right) \right. \\
 & +  {\rm Tr}\left((\zeta^{\frac{t-1}{2}} + \zeta^{-\frac{t-1}{2}} + \zeta^{\frac{t+1}{2}} + \zeta^{-\frac{t+1}{2}} - \zeta^{t-1} - \zeta^{-(t-1)} ) \zeta^{-\frac{t-1}{2}}\right)  \\
 &= \frac{1}{2t}(q+1 +2 + 2(t-1)) \\
 &= \frac{q+1}{2t} + 1
\end{align*}
Considering the part of $M_\eta$ on which $u^t$ acts as $-1$ this gives $\gamma_{2,-1}(M_\eta) \geq \frac{q+1}{2t}+1$ by Corollary~\ref{cor:FlorianTamelyRamfied} and Lemma~\ref{lem:CombOnM}. But this contradicts the fact that $M_\eta$ is isomorphic to a submodule of $M_\alpha$ by \eqref{eq:PSL2Case2pRational}.

\end{proof}


\begin{corollary}\label{cor:zassenhauspsl}
Let $p$ be a prime and set $q=p$ or $q=p^2$. Assume that there exists a prime $t$ so that $q-1 = 4t$ or $q+1= 4t$. Then the Zassenhaus Conjecture holds for $\operatorname{PSL}(2,q)$.
\end{corollary}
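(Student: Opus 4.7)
The plan is to apply Theorem~\ref{th:PSL2Ord2t} as the new ingredient and to quote existing literature for every other order of torsion unit in $\V(\Z G)$, with $G=\PSL(2,q)$. First I would dispose of the case $q=p^2$: if $q-1=4t$, then $(p-1)(p+1)=4t$, and since $p$ is odd the product on the left is divisible by $8$; this forces $t=2$ and $q=9$. If instead $q+1=4t$, then $p^2+1\equiv 2\pmod 8$ cannot equal $4t$. So the only $q=p^2$ admitted by the hypothesis is $q=9$, which is already covered by \cite{MargolisdelRioSerrano} since $9\le 32$. From here on I assume $q=p$ is an odd prime and $p\pm 1=4t$ for some prime $t$.

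Next I would check that Theorem~\ref{th:PSL2Ord2t} applies. Since $p\pm 1=4t$ with $t$ a prime different from $p$, $t$ divides $p^2-1$ to the first power only and $t\nmid p$; hence $t^2\nmid |G|$. Theorem~\ref{th:PSL2Ord2t} thus ensures that units of order $2t$ in $\V(\Z G)$ are rationally conjugate to trivial units. The element orders of $G=\PSL(2,p)$ are $1$, $2$, $p$, and divisors of $(p\pm 1)/2$. Units of prime order $p$ and $t$ are settled by Theorem~\ref{thm:main}, because the corresponding Sylow subgroups have order $p$ and $t$ respectively; units of order $2$ are covered by Hertweck's results on involutions in integral group rings; units of order $2t$ are handled above. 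All remaining possible orders are divisors of the odd factor $(p\mp 1)/2$, which is coprime to both $2p$ and $t$; for these, rational conjugacy follows from the HeLP-type analyses of $\V(\Z\PSL(2,q))$ already in the literature (see \cite{HertweckBrauer,MargolisdelRioSerrano,delRioSerrano17}), possibly again invoking Theorem~\ref{thm:main} for prime divisors appearing to the first power. Finally, the possibility of unit orders outside the element orders of $G$ is excluded by the Prime Graph results available for $\PSL(2,q)$.

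The main obstacle, conceptually, is the $2t$-case, which is precisely what Theorem~\ref{th:PSL2Ord2t} was designed to address; given it, the corollary consists of assembling these existing pieces together with the short observation that the case $q=p^2$ only yields $q=9$.
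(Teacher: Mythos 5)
Your overall strategy coincides with the paper's: show every torsion unit order is an element order, quote the literature (or Theorem~\ref{thm:main}) for all orders except $2t$, and invoke Theorem~\ref{th:PSL2Ord2t} for the one genuinely new case. Your observation that $q=p^2$ together with the hypothesis forces $q=9$ is correct and a pleasant simplification, and replacing \cite[Proposition 6.1]{HertweckBrauer} and \cite[Theorem 1.1]{MargolisdelRioSerrano} by Theorem~\ref{thm:main} for units of order exactly $p$ or $t$ is legitimate (though for the composite odd divisors of $(q\mp 1)/2$ you still need \cite[Theorem 1.1]{MargolisdelRioSerrano}, as you acknowledge).

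There is, however, one genuine gap: after reducing to $q=p$ you never exclude $t=2$. For $p=7$ we have $p+1=4\cdot 2$, so $t=2$ is admitted by the hypothesis, and there your argument fails in two places. First, the claim that ``$t$ divides $p^2-1$ to the first power only'' is false: $2^4$ divides $7^2-1=48$, and indeed $t^2=4$ divides $|\PSL(2,7)|=168$. Second, Theorem~\ref{th:PSL2Ord2t} is stated only for \emph{odd} primes $t$, so it cannot be applied to units of order $4$ in any case. The repair is exactly what the paper does at the outset: if $t=2$ then $q\in\{7,9\}$ and the Zassenhaus conjecture is already known there (\cite[Example 3.6]{Hertweck2006}, \cite{HertweckA6}); only after discarding $t=2$ may one assume $t$ odd, which is what makes $t^2\nmid |G|$ true and Theorem~\ref{th:PSL2Ord2t} applicable. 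A secondary, smaller point: ruling out unit orders that are not element orders needs the full spectrum statement, not just ``Prime Graph results'' (which only control orders that are products of two primes); the paper obtains this from \cite[Propositions 6.3, 6.7]{HertweckBrauer} together with \cite[Theorem A]{BachleMargolis4primaryI}, and you should cite these specifically.
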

\begin{proof}
Let $G = \operatorname{PSL}(2,q)$. The orders of elements in $G$ are exactly the divisors of $p$, $\frac{q-1}{2}$ and $\frac{q+1}{2}$. If $t=2$, then $q=7$ or $q=9$ and the statement is known to hold \cite[Example 3.6]{Hertweck2006}, \cite{HertweckA6}. So we can assume that $t$ is odd. Let $u \in \V(\mathbb{Z}G)$ be a unit of finite order. By \cite[Propositions 6.3, 6.7]{HertweckBrauer} and \cite[Theorem A]{BachleMargolis4primaryI} the order of $u$ coincides with the order of some element of $G$. If the order of $u$ is $p$ it is rationally conjugate to an element of $G$ by \cite[Proposition 6.1]{HertweckBrauer}. If the order of $u$ is odd and different from $p$, it is rationally conjugate to an element of $G$ by \cite[Theorem 1.1]{MargolisdelRioSerrano}. As $G$ contains only one conjugacy class of involutions, also all normalized units of order $2$ are rationally conjugate to elements of $G$. So the last remaining case is that where $u$ has order $2t$ for $t$ an odd prime different form $p$ so that $t^2$ does not divide $|G|$. Hence, $u$ is rationally conjugate to an element of $G$ by Theorem~\ref{th:PSL2Ord2t}.
\end{proof}

This corollary can be applied for instance to show the Zassenahus Conjecture for $\operatorname{PSL}(2,q)$ for $q \in \{29, 43, 53, 67 \}$. It is part of Dickson's conjecture that there are in fact infinitely many primes $p$ such that also $4p+1$ or $4p-1$ is a prime, so that the corollary would then apply for infinitely many values of $q$. 

\begin{remark}
Theoretical as well as computational evidence suggests that using the HeLP-method together with Theorem~\ref{th:PSL2Ord2t} would be enough to prove the Zassenhaus Conjecture for $\operatorname{PSL}(2,q)$ where $q=p$ or $q=p^2$ for an odd prime $p$ such that $(q^2-1)$ is not divisible by the square of any odd prime. A simple adjustment of the proof of \cite[Theorem 5.1]{CaicedoMargolis} shows that there are infinitely many such primes.
\end{remark}


\subsection{Local application of the ``reversal of the lattice method''}
We will give an example of how Theorem~\ref{thm:reversaloflatticemethod} can be applied to produce non-trivial units locally. Our example does not provide new counterexamples to the Zassenhaus conjecture, as the HeLP-method applied in a characteristic not dividing the order of the unit in question disproves its existence over $\Z$. Nevertheless this raises the hope that similar examples  might produce new counterexamples to the Zassenhaus conjecture. These would need to be non-solvable though, as the following proposition shows.

\begin{proposition}\label{prop:CliffWeissApp}
Let $G$ be solvable and let $p$ and $q$ be prime divisors of $|G|$ so that the Sylow $p$-subgroup as well as the Sylow $q$-subgroup have prime order. Then any $u \in \V(\mathbb{Z}G)$ of order $pq$ is conjugate in $\mathbb{Q}G$ to an element of $G$.
\end{proposition}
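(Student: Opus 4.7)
The plan is to combine Theorem~\ref{th:MRSW} with Theorem~\ref{thm:main} and a solvability induction. By Theorem~\ref{th:MRSW}, it suffices to verify $\varepsilon_x(u^d)\geq 0$ for every $x\in G$ and every divisor $d$ of $pq$. For $d=p$, the unit $u^p$ has order $q$, and since the Sylow $q$-subgroup has order $q$, Theorem~\ref{thm:main} gives that $u^p$ is rationally conjugate to an element of $G$, so its partial augmentations are nonnegative. The case $d=q$ is symmetric, and $d=pq$ is trivial. What remains is the case $d=1$; by the Berman-Higman theorem and Theorem~\ref{th:pAsofTorsionUnits}(ii), we only need to show $\varepsilon_x(u)\geq 0$ for $x\in G$ of order $p$, $q$, or $pq$.

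I would proceed by induction on $|G|$. Let $N$ be a minimal normal subgroup of $G$; by solvability $N$ is elementary abelian of some prime order~$r$. If $r\notin\{p,q\}$, then $|N|$ is coprime to $pq$, and the image $\bar u\in \V(\Z(G/N))$ still has order $pq$ (since $\bar{u^p}$ and $\bar{u^q}$ are rationally conjugate in $\Q(G/N)$ to images of elements of orders $q$ and $p$ respectively, which cannot collapse inside $N$). By the induction hypothesis $\bar u$ is rationally conjugate to an element of $G/N$; a Hertweck-style argument on partial augmentations across coprime normal subgroups then pulls the nonnegativity back to $G$. If $r\in\{p,q\}$, say $r=p$, then $N$ is the normal Sylow $p$-subgroup, and by Schur-Zassenhaus $G=N\rtimes H$ with $|H|$ coprime to $p$. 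Writing $u=u_pu_q$ with commuting $p$- and $q$-parts, Theorem~\ref{thm:main} gives (after rational conjugation) $u_p\in N$; the task reduces to locating $u_q$ inside a suitable smaller subgroup where induction applies.

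The main obstacle is this last step, which is exactly the Cliff--Weiss ingredient alluded to by the name of the proposition: from the fact that $u_q$ commutes with $u_p\in N$ in $\Z G$, one must deduce that $u_q$ actually lies in $\V(\Z C_G(u_p))$. This is a rigidity statement about commuting torsion units in integral group rings and follows from the machinery developed by Cliff and Weiss (using that $u_q$ has order coprime to $|N|$, so Hensel-type lifting of centralizer decompositions applies). Once this is achieved, $C_G(u_p)$ is a proper subgroup of $G$ in which the Sylow $p$- and $q$-subgroups still have prime order, so the induction hypothesis finishes the argument by producing an element $h\in C_G(u_p)$ of order $q$ rationally conjugate to $u_q$ there, and then $u_ph\in G$ is the sought element of order $pq$ rationally conjugate to $u$ in $\Q G$.
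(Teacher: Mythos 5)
There is a genuine gap in your proposal, concentrated in the step you yourself flag as the ``main obstacle.'' The claim that $u_q$ can be conjugated into $\V(\Z C_G(u_p))$ because it commutes with $u_p\in N$ is not a theorem of Cliff and Weiss, and it is not true in general: the centralizer of $u_p$ in $\Q G$ is a large semisimple subalgebra that properly contains $\Q C_G(u_p)$, so commuting with $u_p$ gives no control over which group elements support $u_q$. Worse, $C_G(u_p)$ can equal $G$ (for instance when $P$ is central), in which case there is no proper subgroup to induct on and the reduction collapses entirely. Your earlier reduction step is also stated in the wrong direction: from $\bar u$ being rationally conjugate in $G/N$ you cannot ``pull nonnegativity back'' to $G$; the correct use of the Dokuchaev--Juriaans/Hertweck lemma is contrapositive, namely that a counterexample in $G$ yields a counterexample in $G/N$, so the argument must be organized around a minimal counterexample rather than a direct descent.

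The paper's proof fills exactly the hole you identified, but by a different mechanism. After killing normal $\{p,q\}'$-subgroups via the minimal-counterexample reduction, one may assume $P=O_p(G)$ is the normal Sylow $p$-subgroup. Writing $C_G(P)=P\times K$ with $K$ a characteristic $p'$-subgroup, one shows that $K$ has no nontrivial characteristic $q'$-subgroup (it would be characteristic in $G$ of order coprime to $pq$), which by solvability forces the Sylow $q$-subgroup $Q$ to be normal as well. Thus $P\times Q$ is an \emph{abelian normal} subgroup of order $pq$, and Theorem~\ref{th:pAsofTorsionUnits} forces $u$ to map to the identity in $\Z(G/(P\times Q))$. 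This is precisely the hypothesis under which the Cliff--Weiss theorem (in the form of \cite[Corollary 5.4]{MargolisdelRioCW1}) applies: it is a statement about torsion units that become trivial modulo an abelian normal subgroup, not a rigidity statement about commuting units. So the ingredient you need is not a centralizer-descent lemma but the group-theoretic observation that \emph{both} Sylow subgroups are normal, after which the genuine Cliff--Weiss result finishes. Your invocation of Theorem~\ref{thm:main} to handle $u^p$ and $u^q$ is fine but ultimately unnecessary under this route.
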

\begin{proof}
Let $G$ be a counterexample of minimal order and $u \in \V(\mathbb{Z}G)$ an element of order $pq$ not rationally conjugate to an element of $G$. If $N$ is a normal subgroup of $G$ of order not divisible by $p$ or $q$, then the partial augmentations of $u$ do not become trivial in the quotient $\mathbb{Z}(G/N)$ by \cite[Lemma 2.1]{DokuchaevJuriaans}, so that $G/N$ would be a smaller counterexample to our claim. Hence we can assume that $G$ has a normal subgroup of order $p$. Let $P$ be a Sylow $p$-subgroup of $G$ and $C = C_G(P) = P \times K$ for some $p'$-subgroup $K$ of $G$ which exists by the Schur-Zassenhaus theorem. By \cite{HertweckOrders} we know that $G$ contains elements of order $pq$, so that there is a Sylow $q$-subgroup $Q$ of $G$ contained in $K$. Assume $K$ contains a normal subgroup $N$ of order not divisible by $q$. As $P$ is a characteristic subgroup of $G$, so are $C$ and $K$ and hence $N$ is also normal in $G$, something we have already excluded. So $Q$ is in fact a normal subgroup of $G$ and so is $P \times Q$. It is well-known and follows e.g. also from Theorem~\ref{th:pAsofTorsionUnits} that $u$ maps to the identity under the map $\mathbb{Z}G \rightarrow \mathbb{Z}(G/P\times Q)$. But then it is known by a theorem of Cliff and Weiss, which is more explicitly stated for this situation in \cite[Corollary 5.4]{MargolisdelRioCW1}, that $u$ is rationally conjugate to an element of $G$. 
\end{proof}

We will now give an example were we can show the existence of a non-trivial normalized torsion unit locally at the primes dividing its order.

\begin{example}\label{ex:PSL216}
Let $G = \operatorname{PSL}(2,16)$, so $|G| = 2^4 \cdot 3 \cdot 5 \cdot 17$. In particular, the $p$-blocks of $G$ have defect $0$ or $1$ for each odd prime $p$. Note that the Schur indices of all irreducible complex characters are equal to $1$ \cite{ShahabiShojaei}, so that no non-commutative skew fields come up in the Wedderburn-Artin decomposition of $\mathbb{Q}G$, which allows us to apply Theorem~\ref{thm:reversaloflatticemethod} to all blocks.

We will show that there is $u \in \V(\mathbb{Z}_{(3,5)}G)$ of order $15$ with non-trivial integral partial augmentations, where $\Z_{(3,5)}=\Z_{(3)}\cap \Z_{(5)}$. We use the notation of \texttt{GAP} for the conjugacy classes of $G$, and we also number the irreducible ordinary characters of $G$ as in the \texttt{GAP} character table library. Our unit will have the following properties: $u^5$ is rationally conjugate to an element in the conjugacy class $3a$, $u^3$ is rationally conjugate to an element in the conjugacy class $5a$ and the only non-vanishing partial augmentations of $u$ are $\varepsilon_{15c}(u) = -1$, $\varepsilon_{15d}(u) =2$. It can be checked for instance using the HeLP-package \cite{BachleMargolisHeLPPackage} that such a unit exists in $\mathbb{Q}G$. 

It remains to show that such a unit exists also $3$-locally and $5$-locally. All information on $G$ we will use can easily be computed using \texttt{GAP} \cite{GAP4}, including the claimed eigenvalues of $u$. We will denote by $\zeta$ a fixed primitive $15$-th root of unity.

We first consider the prime $3$ and set $\mathcal{O} = \mathbb{Z}_3$. We set $\zeta^6 = \xi_1$ and $\xi_j = \xi_1^j$ to use the notation of Theorem~\ref{thm:reversaloflatticemethod}. The subscripts of $\xi$ will be considered modulo $5$. There are two blocks of defect $1$ in $\mathcal{O}G$: the principal block and a non-principal block which splits into two blocks in $\mathcal{O}[\zeta^3]G$. The principal block contains three irreducible ordinary characters, namely $\chi_1$, $\chi_{10}$ and $\chi_{11}$ and two irreducible Brauer characters $\psi_1$ and $\psi_{10}$. We can visualize the Brauer tree as follows.

\[
\begin{tikzpicture}
\node[label=north:{$\chi_1$}] at (0,1.5) (1){};
\node[label=north:{$\chi_{11}$}] at (1.5,1.5) (2){};
\node[label=north:{$\chi_{10}$}] at (3,1.5) (3){};
\foreach \p in {1,2,3}{
\draw[fill=white] (\p) circle (.075cm);
}
\draw (3) circle (.075cm);
\draw (.075,1.5)--(1.425,1.5);
\draw (1.575,1.5)--(2.925,1.5);
\node[label=south:{$\psi_1$}] at (.75,1.7) (2){};
\node[label=south:{$\psi_{10}$}] at (2.2,1.7) (2){};
\end{tikzpicture}
\] 

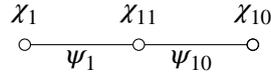
\captionof{figure}{Brauer tree of the principal $3$-block for $\operatorname{PSL}(2,16)$}
\vspace*{.5cm}

Note that there is no exceptional vertex. Clearly $\mu(1,u,\chi_1) = 1$ and $\mu(\eta, u, \chi_1)=0$ for any $\eta$ different from $1$. Moreover, we get
\begin{align*}
\mu(1,u, \chi_{10}) & = 2, \ \mu(\zeta^i,u,\chi_{10}) = 1 \ \ \text{for} \ \ 1 \leq i \leq 14 \\
\mu(\zeta^{\pm 5},u, \chi_{11}) & = 2, \ \mu(\zeta^i,u,\chi_{11}) = 1 \ \ \text{for} \ \ i \neq \pm 5, \  -7\leq i \leq 7 
\end{align*}
We conclude that we can choose, in the notation of Theorem~\ref{thm:reversaloflatticemethod}:
\begin{align*}
M_{\chi_1, 0} & = S_{\psi_1, 0} = I_1, \ M_{\chi_1, j} = S_{\psi_1,j} = 0 \ \ \text{for} \ \ j \not\equiv 0 \mod 5, \\
M_{\chi_{10}, 0} & = S_{\psi_{10}, 0} = I_3 \oplus I_1, \ M_{\chi_{10}, j} = S_{\psi_{10}, j} = I_3 \ \ \text{for} \ \ j \not\equiv 0 \mod 5, \\
M_{\chi_{11}, 0} & = I_3 \oplus I_2, \ M_{\chi_{11}, j} = I_3 \ \ \text{for} \ \ j \not\equiv 0 \mod 5.
\end{align*}
It is then easy to find filtrations as needed for the application of Theorem~\ref{thm:reversaloflatticemethod}.

We now consider the non-principal $3$-block $B$. To apply Theorem~\ref{thm:reversaloflatticemethod} it is sufficient to choose characters in ${\rm Irr}(B)$ and ${\rm IBr}(B)$ which are representatives of the action of ${\rm Gal}(\bar{\mathbb{Q}}_3/\mathbb{Q}_3)$ and ${\rm Gal}(\bar{\mathbb{F}}_3/\mathbb{F}_3)$ respectively. Such representatives are given by the ordinary characters $\chi_{12}$ and $\chi_{17}$ and the Brauer character $\psi_{11}$. Note that the block of $\mathcal{O}[\zeta^3]G$ containing $\chi_{17}$ also contains $\chi_{16}$, but $\chi_{16}$ and $\chi_{17}$ are Galois conjugate. The Brauer tree of the block of $\mathcal{O}[\zeta^3]G$ containing these characters looks as follows.

\[
\begin{tikzpicture}
\node[label=north:{$\chi_{12}$}] at (0,1.5) (1){};
\node[label=north:{$\chi_{16}, \chi_{17}$}] at (1.5,1.5) (2){};
\foreach \p in {1,2}{
\draw[fill=white] (\p) circle (.075cm);
}
\draw[fill=black] (2) circle (.075cm);
\node[label=south:{$\psi_{11}$}] at (.75,1.7) (2){};
\draw (.075,1.5)--(1.425,1.5);
\end{tikzpicture}
\] 

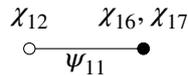
\captionof{figure}{Brauer tree of a non-principal $3$-block for $\operatorname{PSL}(2,16)$}\label{fig:BrauerTree216p3Principal}
\vspace*{.5cm}

We calculate
\begin{align*}
\mu(\zeta^{\pm 6},u, \chi_{12}) & = 2, \ \mu(\zeta^i,u,\chi_{12}) = 1 \ \ \text{for} \ \ i\neq \pm 6, \ -7 \leq i \leq 7 \\
\mu(\zeta^{\pm 1},u, \chi_{17}) & = 3, \ \mu(\zeta^{\pm 4},u, \chi_{17}) = 0, \ \mu(\zeta^i,u,\chi_{17}) = 1 \ \ \text{for} \ \ i\notin \{\pm 1, \pm 4 \}, \ -7 \leq i \leq 7
\end{align*}
Let 
\begin{align*}
M_{\chi_{12}, 0} & = M_{\chi_{17}, 0} = S_{\psi_{11}, 0} = I_3, \\
M_{\chi_{12}, \pm 2} & = M_{\chi_{17}, \pm 2} = S_{\psi_{11}, \pm 2} = I_3, \\
M_{\chi_{12}, \pm 1} & = M_{\chi_{17}, \pm 1} = S_{\psi_{11}, \pm 1} = I_2 \oplus I_1^2.
\end{align*}
It is again easy to choose filtrations satisfying the conditions of Theorem~\ref{thm:reversaloflatticemethod}. Note that it was important to choose $ S_{\psi_{11}, \pm 1}$ as $I_2 \oplus I_1^2$, though the eigenvalues of $\chi_{12}$ would allow to choose $I_3 \oplus I_1$, while the eigenvalues of $\chi_{17}$ would also allow to choose $I_1^4$, but none of these choices would be consistent with the other character. We conclude that $u$ is conjugate in $\mathbb{Q}_3G$ to an element of $\mathbb{Z}_3G$.

We next consider the prime $5$ and set $\OO=\Z_5$. We also set $\xi_1 = \zeta^{10}$, $\xi_2=\zeta^5$, $\xi_3=1$ and view the subscripts of $\xi$ modulo $3$. There are two blocks of defect $1$: the principal block and a non-principal block. The principal block contains four irreducible ordinary characters, namely $\chi_1$, $\chi_{10}$, $\chi_{12}$ and $\chi_{13}$ and two irreducible Brauer characters $\psi_1$ and $\psi_{10}$ The Brauer tree has the following shape.

\[
\begin{tikzpicture}
\node[label=north:{$\chi_1$}] at (0,1.5) (1){};
\node[label=north:{$\chi_{12}, \chi_{13}$}] at (1.5,1.5) (2){};
\node[label=north:{$\chi_{10}$}] at (3,1.5) (3){};
\foreach \p in {1,2,3}{
\draw[fill=white] (\p) circle (.075cm);
}
\draw[fill=black] (2) circle (.075cm);
\draw (.075,1.5)--(1.425,1.5);
\draw (1.575,1.5)--(2.925,1.5);
\node[label=south:{$\psi_1$}] at (.75,1.7) (2){};
\node[label=south:{$\psi_{10}$}] at (2.2,1.7) (2){};
\end{tikzpicture}
\] 
\captionof{figure}{Brauer tree of the principal $5$-block for $\operatorname{PSL}(2,16)$}\label{fig:BrauerTree216p5Principal}
\vspace*{.5cm}
Note that the exceptional vertex lies in the middle and has multiplicity $2$. The multiplicities of the eigenvalues for $\chi_{10}$ and $\chi_{12}$ are given above, and as $\chi_{12}$ and $\chi_{13}$ are Galois conjugate, it suffices to know the multiplicities of eigenvalues for one of them. We now set
\begin{align*}
M_{\chi_1, 0} & = S_{\psi_1, 0} = I_1, \ M_{\chi_1,\pm 1} = S_{\psi_1, \pm 1} = 0, \\
M_{\chi_{10}, 0} & = S_{\psi_{10}, 0} = I_5 \oplus I_1, \ M_{\chi_{10}, \pm 1} = S_{\psi_{10}, \pm 1} = I_5, \\
M_{\chi_{12}, 0} & = I_5 \oplus I_2, \ M_{\chi_{12}, \pm 1} = I_5,
\end{align*}
which again allows filtrations as needed for the application of Theorem~\ref{thm:reversaloflatticemethod}.

Finally, we consider the non-principal $5$-block. This block contains the ordinary characters $\chi_{11}$, $\chi_{14}$, $\chi_{15}$, $\chi_{16}$, $\chi_{17}$ and the Brauer character $\psi_{11}$. The Brauer tree has shape

\[
\begin{tikzpicture}
\node[label=north:{$\chi_{11}$}] at (0,1.5) (1){};
\node[label=north:{$\chi_{14}, \chi_{15}, \chi_{16}, \chi_{17}$}] at (2.5,1.5) (2){};
\foreach \p in {1,2}{
\draw[fill=white] (\p) circle (.075cm);
}
\draw[fill=black] (2) circle (.075cm);
\node[label=south:{$\psi_{11}$}] at (1.2,1.7) (2){};
\draw (.075,1.5)--(2.425,1.5);
\end{tikzpicture}
\] 
\captionof{figure}{Brauer tree of the non-principal $5$-block for $\operatorname{PSL}(2,16)$}\label{fig:BrauerTree216p5NonPrincipal}
\vspace*{.5cm}
So the multiplicity of the exceptional vertex is $4$. We can choose any exceptional character as a representative of the Galois orbit and we choose $\chi_{17}$ as we have computed the multiplicities of the eigenvalues for this character already for the $3$-local case. We get that the following choices allow us to define the necessary filtrations:
\begin{align*}
M_{\chi_{11}, 0} & = M_{\chi_{17}, 0} = S_{\psi_{11}, 0} = I_5, \\
M_{\chi_{11}, \pm 1} & = M_{\chi_{17}, \pm 1} = S_{\psi_{11}, \pm 1} = I_4 \oplus I_1^2.
\end{align*}
We conclude that $u$ is indeed conjugate in $\mathbb{Q}_5G$ to an element in $\mathbb{Z}_5G$.

We have thus proven the existence of $u$ locally for the primes $3$ and $5$. By \cite[Proposition 3.2]{EiseleMargolis} a unit with the claimed properties exists in $\mathbb{Z}_{(3,5)}G$. It is known though that units of order $15$ in $\mathbb{Z}G$ are rationally conjugate to elements of $G$ and even that the Zassenhaus Conjecture holds for $G$ \cite[Theorem C]{BachleMargolis4primaryII}. The obstruction comes from the prime $2$, which does not divide the order of the unit.
\end{example}

\bibliographystyle{amsalpha}
\bibliography{ramlat}
\end{document}